\newtheorem{theorem}{Theorem}[section]
\newtheorem{lemma}[theorem]{Lemma}
\newtheorem{corollary}[theorem]{Corollary}
\newtheorem{definition1}{Definition}[section]
\newtheorem{observe}{Observation}[section]
\newtheorem{remark1}[observe]{Remark}
\newtheorem{example1}{Example}[section]
\newtheorem{aside1}[observe]{Aside}
\newenvironment{observation}{\begin{observe} \rm}{\end{observe}}
\newenvironment{remark}{\begin{remark1} \rm}{\end{remark1}}
\def\qed{\hfill$\blacksquare$\\} \renewenvironment{proof}{\noindent {\bf 
Proof.}}{\qed}
\algrenewcommand\algorithmicindent{3.0em}
\xpatchcmd{\algorithmic}{\itemsep\z@}{\itemsep=2.3ex}{}{}
\newif\ifshowboxes \showboxestrue
\renewcommand{\hat}{\widehat}
\renewcommand{\tilde}{\widetilde}
\newcommand{\inner}[1]{\ensuremath{ {\langle #1 \rangle} }}
\newcommand{\norm}[1]{\ensuremath{ {\lVert #1 \rVert} }}
\newcommand{\bnorm}[1]{\ensuremath{ {\bigl\lVert #1 \bigr\rVert} }}
\newcommand{\bbnorm}[1]{\ensuremath{ {\biggl\lVert #1 \biggr\rVert} }}
\newcommand{\abs}[1]{\ensuremath{ {\lvert #1 \rvert} }}
\def\R{\mathbbm{R}}
\def\1{\mathbbm{1}}
\begin{document}

\begin{center}
   \begin{minipage}[t]{6.0in}

In this paper, we describe an algorithm for approximating functions 
of the form 
$f(x)=\int_{a}^{b} x^{\mu} \sigma(\mu) \, d \mu$ over $[0,1]$, where
$\sigma(\mu)$ is some signed Radon measure, or, more generally,
of the form 
$f(x) = \inner{\sigma(\mu), x^\mu}$, where
$\sigma(\mu)$ is some
distribution supported on $[a,b]$, with $0 <a < b< \infty$.
One example from this class of functions is 
$x^c (\log{x})^m=(-1)^m \inner{\delta^{(m)}(\mu-c), x^\mu}$, where $a\leq c \leq b$
and $m \geq 0$ is an integer.
Given the desired accuracy $\epsilon$ and the values of $a$ and $b$,
our method determines a priori a collection of non-integer powers 
$t_1$, $t_2$, \ldots, $t_N$, 
so that the functions are approximated by series of the form
$f(x)\approx \sum_{j=1}^N c_j x^{t_j}$,
and a set of collocation points
$x_1$, $x_2$, \ldots, $x_N$, 
such that the expansion coefficients 
can be found by 
collocating the function at these points.
We prove that our method has a small uniform approximation error which is
proportional to $\epsilon$ multiplied by some small constants, and
that the number of singular powers and
collocation points grows as $N=O(\log{\frac{1}{\epsilon}})$. We
demonstrate the performance of our algorithm with several numerical
experiments.

\thispagestyle{empty}

  \vspace{ -100.0in}

  \end{minipage}
\end{center}

\vspace{ 2.60in}
\vspace{ 0.50in}

\begin{center}
  \begin{minipage}[t]{4.4in}
    \begin{center}

\textbf{On the Approximation of Singular Functions by Series of Non-integer
Powers}\\

  \vspace{ 0.50in}

Mohan Zhao$\mbox{}^{\dagger \, \diamond}$
and
Kirill Serkh$\mbox{}^{\ddagger\, \diamond}$ \\
University of Toronto NA Technical Report \\
%          April 29, 2021
              v2, \today

    \end{center}
  \vspace{ -100.0in}
  \end{minipage}
\end{center}

\vspace{ 2.00in}

\vfill

\noindent 
$\mbox{}^{\diamond}$  This author's work  was supported in part by the NSERC
Discovery Grants RGPIN-2020-06022 and DGECR-2020-00356.
\\
%
%\noindent 
%$\mbox{}^{\oplus}$  This author's work was supported in part under ONR
%N00014-18-1-2353 and NSF DMS-1952751. \\

%%%%
%%%\noindent 
%%%$\mbox{}^{\ominus}$  This author's work was supported in part under
%%%AFOSR FA-9550-09-1-0027 \\
%%%%

\vspace{2mm}

\noindent
$\mbox{}^{\dagger}$ Dept.~of Computer Science, University of Toronto,
Toronto, ON M5S 2E4 \\
Corresponding author. Email: mohan.zhao@mail.utoronto.ca \\

\noindent
$\mbox{}^{\ddagger}$ Dept.~of Math. and Computer Science, University of Toronto,
Toronto, ON M5S 2E4 \\
Email: kserkh@math.toronto.edu \\

\vspace{2mm}

%%%\noindent
%%%Approved for public release: distribution is unlimited.

\noindent
{\bf Keywords:}
{\it approximation theory, singular functions, singular value
decompositions, corners, endpoint singularities, Laplace transforms,
partial differential equations}
% {\it Kummer's Equation, WKB Approximation, Phase Functions}

%\vfill
%\eject

%\tableofcontents

\section{Introduction}
The approximation of functions with singularities is a central topic
in approximation theory. One motivating application is the efficient
representation of solutions 
to partial differential equations (PDEs) on nonsmooth geometries or
with discontinuous data, which are known to have 
branch-point singularities. 
Substantial progress has been made in this area,
with perhaps the most common approach being rational approximation and its variants.  
Alternative approaches include the use of 
approximation methods for smooth functions on the real line, applied after a
change of variables to 
ensure a rapid function decay and the translation of 
singularities 
to infinity, and schemes 
that make use of basis functions obtained
through the discretization of certain integral operators. 
If the dominant characteristics of the functions to be approximated
are known a priori,
a class of methods called  
expert-driven approximation can also be used.

Rational approximation is a classical and well-established method for approximating
functions with singularities. Rational functions 
$r(x)=p(x)/q(x)$ are said to be of type $(n,m)$ if $p$ and $q$ are polynomials of degree
at most $n$ and $m$, respectively, and the order of $r(x)$ is defined as 
$\max{(\deg(p),\deg(q))}$.
In 1964, 
Newman proved that there exists an $n$-th order rational
approximation to the function $f(x)=|x|$ 
on $[-1,1]$,
converging uniformly at a rate of 
$O(\exp(-C\sqrt{n}))$ for some constant $C>0$ \cite{newman}---compare this to
the best polynomial approximation to $|x|$ on $[-1,1]$, which can only
achieve a convergence rate no better than $O(n^{-1})$. 
Furthermore, he observed that the same approximation also applies to the
function $f(x)=\sqrt{x}$ and, more generally, to the
functions $f(x)=x^{\alpha}$ on $[0,1]$, where $\alpha >0$.
Notably, Newman's approximation utilizes poles that are clustered exponentially
and symmetrically
around zero along the imaginary axis.

Numerous papers have been 
published on rational approximation methods 
for functions with singularities since Newman's discovery (see, for example, 
\cite{rapid}, \cite{best}, \cite{real}, \cite{minimax}).
The best possible rational approximation is the
so-called minimax approximation, which minimizes
the maximum uniform approximation
error between the function and its rational approximate.
It was shown by Stahl in 1994 that error of the minimax approximation to
$f(x)=x^\alpha$ on $[0,1]$, where $\alpha > 0$, converges at the rate
$O(\exp(-2\pi\sqrt{\alpha n}))$~\cite{best}.
This minimax approximation is, however, not easy to find, 
and is not necessarily unique in the complex plane \cite{unique}.
In practice, by assuming that the singular functions being
approximated fall into certain regularity classes, the poles of a rational
approximation can often be determined a priori,
similar to those employed in Newman's method,
in order to achieve a root-exponential 
convergence rate.
One such method is Stenger's approximation \cite{stenger2}, which
involves interpolating 
the functions at a set of preassigned points
exponentially clustered near the endpoints of the interval, 
using a rational function of type $(2n+2, 2n+1)$ with 
poles 
that are likewise exponentially clustered
at the endpoints. 

While Stenger's method uses explicit interpolation formulas for 
approximating functions falling into certain regularity classes, rational approximations 
can also
be constructed numerically using other representations.
By using Euclidean division and the method of undetermined coefficients, it
is possible to show that any
rational function $r(x)$ of type $(n+m,n)$ with distinct poles can be written in the form
\begin{align}
r(x)=\sum_{j=0}^m b_jx^j +\sum_{j=1}^n \frac{a_j}{x-z_j}.
\label{eq:rx}
\end{align}
In order to approximate functions with branch point singularities, 
lightning methods (\cite{gopal}, \cite{gopal2}) fix the poles in the
representation~(\ref{eq:rx}) a priori to cluster exponentially along rays
in the complex plane, terminating at
the singular points of the function being approximated.
The coefficients $\{a_j\}$ and $\{b_j\}$
are then determined by solving a least squares problem at oversampled points.
% A class of methods utilizing this technique is known as  
% lightning methods, which have been designed to approximate 
It was proved in 
\cite{gopal} that, 
for any sequence of $n$ complex points exhibiting exponential clustering,
with spacing scaling as
$O(n^{-1/2})$ on a logarithmic scale, there exists a sequence of
rational approximations $\{r_n\}$  of type $(n-1,n)$
with poles at these points, which
achieves a root-exponential rate of convergence $O(\exp(-C\sqrt{n}))$
for functions with branch point singularities.

Rather than fixing the poles of a rational function a priori, one can
instead fix the interpolation or support points of a rational
interpolant.
As shown in~\cite{aaa},
any rational function $r(x)$ of type $(n,n)$ which does not have poles at the
points $\{z_j\}$
can be written in the barycentric form 
\begin{align}
r(x)=\sum_{j=1}^{n+1} \frac{w_j f_j}{x-z_j} \Big/ \sum_{j=1}^{n+1} \frac{w_j}{x-z_j},
\end{align}
so that $r(z_j)=f_j$. The 
adaptive Antoulas-Anderson (AAA) algorithm \cite{aaa} is a 
rational function approximation method based on this form, which increases the order
$n$ at each iteration, selecting
the additional collocation point $z_{n+1}$ in a greedy fashion.
Like lightning methods, the weights are found by solving a least-squares problem. Unlike
lightning methods, however, the locations of the singular points of the 
function being approximated do not have to be
known in advance. The AAA method is also root-exponentially convergent, and achieves
a convergence rate close to the minimax rate.

While all of the aforementioned
methods can achieve root-exponential rates of convergence, 
Trefethen et al. \cite{cluster} made a key observation that the constant $C$  
in the rates of convergence $O(\exp(-C \sqrt{n}))$
can be improved for most rational approximation methods by  
employing poles with so-called tapered
exponential clustering around singularities, so that
the poles $\{z_j\}$ cluster like $O(\sqrt{n}-\sqrt{j})$ on
a logarithmic scale.
It was shown in~\cite{resolution} that
lightning
approximations~(\ref{eq:rx}) 
with $m=O(\sqrt{n})$ and with poles
$\{z_j\}$ with tapered exponential clustering attain the minimax
rate for the functions $f(x)=x^\alpha$ on $[0,1]$, where $\alpha > 0$.

Rational approximation can also be applied after a change of variables.  An
approach referred to as reciprocal-log approximation \cite{log} uses
approximations of the form $r(-\log{x})$, where $r(s)$ is an $n$-th order rational function
with poles determined a priori, either lying on a parabolic contour or confluent
at the same point in the complex plane.  Similarly to lightning methods, the coefficients are
determined through  a linear least-squares problem using collocation points
that cluster exponentially around $z=0$.  This method converges at a rate of
$O(\exp(-Cn))$ or $O(\exp(-C n/\log{n}))$ for functions
with branch-point singularities, depending on the form of the
approximation and the function's behaviour in the complex plane.

An alternative approach is to use a combination of a change of variables
and an approximation scheme that 
converges rapidly for smooth functions on the real line. 
By applying smooth transformations to functions
with singularities at the endpoints of
some finite intervals on the real line, they 
can be transformed into rapidly
decaying functions, with the
singularities mapped to the point at infinity. 
After this transformation, 
such functions can be approximated accurately using the Sinc approximation,
by an $n$-term truncated Sinc expansion.
Two primary approaches of this type have been developed: the SE-Sinc 
and DE-Sinc approximations (see, for example, \cite{stenger}, \cite{sinc} and
\cite{develop}). The SE-Sinc approximation combines 
the single-exponential transformation with the Sinc approximation, resulting
in a convergence rate of
$O(\exp(-C\sqrt{n}))$, while the DE-Sinc approximation
combines the double-exponential transformation with
the Sinc approximation, to further improve the convergence rate to 
$O(\exp(-C n/\log{n}))$.

While the aforementioned methods require no special knowledge of the 
singularities being approximated,
a class of methods known as expert-driven approximation can be 
used to leverage 
such information.
For example, one can leverage knowledge of the leading terms in the asymptotic expansion
of the singularity to achieve a smaller approximation error.
This information is often available for the 
solutions of boundary value problems for PDEs
on domain with corners---as 
revealed by Lehman (\cite{lehman}) and Wasow (\cite{wasow}), 
the solutions of the Dirichlet problem for linear second order 
elliptic PDEs in two dimensions have singular expansions of the form
\begin{align}
u(r,\theta) \sim \sum_{k,m,l \ge 0} a_{k,m,l} r^{k+l/\alpha}(r^p\log{r})^m \varphi_{k,m,l}(\theta),
\end{align}
where $\varphi_{k,m,l}$ are smooth functions, 
$r$ is the radial distance from the corner, 
$\pi \alpha$ is the interior angle at the corner (so that $1/\alpha \ge 1/2$), and $p \geq 1$ is an integer.
Many well-developed methods fall under the category of expert-driven 
approximation, such as the method of 
auxiliary mapping (see, for example, \cite{MAM}, \cite{MAM2}),
in which an analytic change of variables is used  
to lessen the singular behaviour of the function,
and enriched approximation methods
(see, for example, \cite{enriched}),  
in which singular basis functions are used to augment 
a conventional basis.
Some examples of enriched approximation
methods include extended/generalized finite element methods
(see, for example, \cite{olson}, \cite{fix}, \cite{extend}), 
enriched spectral and pseudo-spectral methods (see, for
example, \cite{spec}, \cite{gopal}, \cite{pseudo}), and 
integral equation methods using singular basis functions
(see, for example, \cite{corners} and \cite{corners2}).

A much different class of approaches is 
based on the idea
that the functions we are interested in approximating 
often belong to the range of certain integral operators. 
One such method proposed by Beylkin and Monz\'{o}n \cite{expo} 
involves representing a function by a linear combination of
exponential terms with complex-valued exponents and coefficients.
This method is motivated by the observation that many functions admit
representations by exponential integrals over contours in the complex plane, 
which can then be discretized by quadrature.
Instead of starting with a contour integral,
the existence of such representations is only
assumed implicitly, and the exponents (which they also call nodes) are
obtained by finding the roots of a c-eigenpolynomial corresponding to a
Hankel matrix, constructed from uniform samples of the function
over the interval, while the coefficients (or weights) are determined via a
Vandermonde system.
This method can be highly effective for representing functions, though 
we note that their method only minimizes the error at the sample
points, and, for singular functions, they only try to control the error on a
subinterval which excludes the singularities.

In this paper, we present a method for approximating functions with an
endpoint singularity over $[0,1] \subset \mathbb{R}$ or, more generally, a
curve $\Gamma \subset \mathbb{C}$,
where the functions have the form
$f(x)=\int_{a}^{b} x^{\mu} \sigma(\mu) \, d \mu$,
where $0<a<b<\infty$,
$x \in [0,1]$ or $x \in \Gamma$, and 
$\sigma(\mu)$ is some signed Radon measure over $[a,b]$
or some distribution supported on $[a,b]$.
% This resembles the approach by Stenger in \cite{stenger2}, in which
% he assumed that the functions being approximated belong to some predetermined class.
Some examples of such functions are 
$x^c=\int_{a}^{b} x^{\mu} \delta(\mu-c) \, d \mu$
and 
$x^c(\log{x})^m=(-1)^m \int_{a}^{b} x^{\mu} \delta^{(m)}(\mu-c) \, d \mu$,
where $a \leq c \leq b$, $m \in \mathbb{Z}$ and $m \geq 0$.
Our method represents these functions as expansions of the form
$\hat{f}_N(x)=\sum_{j=1}^N \hat c_j x^{t_j}$,
so that $\norm{f-\hat{f}_N}_{L^{\infty}[0,1]} \approx \epsilon$,
where the singular powers 
$t_1$, $t_2$, \ldots,
$t_N$
are determined a priori
based on the 
desired approximation accuracy $\epsilon$ and the values of $a$ and $b$.
The coefficients of the expansion are determined by numerically solving a 
Vandermonde-like collocation
problem
\begin{align}
\begin{pmatrix}
x^{t_1}_1& x^{t_2}_1 & \   \ldots& \ x^{t_N}_1\\
x^{t_1}_2& x^{t_2}_2 & \   \ldots& \ x^{t_N}_2\\
\vdots & \vdots& \  \ddots &   \vdots\\
x^{t_1}_N& x^{t_2}_N & \   \ldots& \ x^{t_N}_N
\end{pmatrix}
\begin{pmatrix}
c_1\\
c_2\\
\vdots\\
c_N
\end{pmatrix}
=
\begin{pmatrix}
f(x_1)\\
f(x_2)\\
\vdots\\
f(x_N)
\end{pmatrix}
\end{align}
for $f(x)$ at the points 
$x_1$, $x_2$, \ldots,
$x_N$,
where the collocation points are likewise determined a priori by 
$\epsilon$, $a$ and $b$.
%%%We show that these collocation points 
%%%cluster tapered-exponentially near the singularities at $x=0$. 
We both prove and show numerically that, in order to obtain a uniform approximation
error of $\epsilon$, the number of basis functions and collocation points
grows as $N=O(\log{\frac{1}{\epsilon}})$.

Note that our assumption on the form of the functions being approximated resembles the
approach by Stenger in \cite{stenger2}, in which he also assumed that the
functions being approximated belong to some predetermined regularity class.
Our assumption means that our method focuses
on functions $f(x)$ that are 
in the range of the truncated Laplace
transform 
after the change of variable $x=e^{-s}$, 
with
$f(e^{-s})=\int_a^b e^{-s\mu}\sigma{(\mu)} \, d\mu$.
The reciprocal-log approximation \cite{log} shares a similar idea.
This method specializes in approximating functions with branch-point
singularities, such as $f(x) = x^{\alpha}$ on $[0,1]$, where $\alpha>0$, which 
are transformed into decaying exponentials $e^{-s\alpha}$ by 
the same change of variable.
Their approach leverages the fact that certain rational approximations
can be obtained to approximate these decaying exponentials
with an exponential 
rate of convergence. Consequently, $x^{\alpha}$ can be approximated 
with an exponential rate of convergence using
a rational approximation $r(s)$ with the change of variable $s=-\log{x}$.
In contrast, our method relies on the discretization of the integral 
representation of $f(e^{-s})$ through the use of the singular value decomposition of
the truncated Laplace transform. Our procedure yields the quadrature nodes that 
enable us to approximate $f(x)$ using singular powers.
The methodology in 
\cite{expo} also bears certain similarities with our method, 
in that they assume implicit integral representations of the functions, 
with decaying exponential kernels. However, rather than directly
discretizing the integrals or the integral operators, 
they identify the exponential terms and coefficients in their approximations 
through an analysis of  
the singular value decomposition of some Hankel 
matrix constructed from the function values. 

Our method converges exponentially, 
in contrast to rational approximation which converges only at a
root-exponential rate.  When compared to
the DE-Sinc approximation method which requires a large number of
collocation points placed at the both endpoints after applying the smooth
transformation (even when singularities only occur at only one endpoint),
and reciprocal-log approximation which uses many collocation points together
with least squares, our method has a small number of both basis functions
and collocation points, such that the coefficients can be determined via a
square, low-dimensional Vandermonde-like system.
Unlike the method proposed by Beylkin and Monz\'{o}n \cite{expo}, which 
only ensures an accurate approximation at 
equidistant points, our method ensures
a small 
uniform error over the entire interval. 
Compared to expert-driven approximation,
our method does not require any prior knowledge of the singularity types,
besides the values of $a$ and $b$, and the resulting basis functions depend
only on these values, together with the precision $\epsilon$.

The structure of this paper is as follows. \Cref{sec:pre} reviews 
the truncated Laplace transform and the 
truncated singular value decomposition of a matrix. \Cref{sec:NA} demonstrates some numerical
findings about the singular value decomposition of 
the truncated Laplace transform. \Cref{sec:AA} develops the main analytical
tools of this paper.
\Cref{sec:prac} presents some numerical experiments which
provide practical conditions for the 
use of the theorems in \Cref{sec:AA}. \Cref{sec:interp}
shows that functions of the form $f(x) = \int_a^b x^\mu \sigma(\mu)\, d\mu$ 
can be approximated uniformly by expansions in singular powers. 
\Cref{sec:NAEA} shows that the coefficients of such expansions
can be obtained numerically by solving a Vandermonde-like system, 
and provides a bound for the uniform approximation error.
\Cref{sec:ext} illustrates that the previous results can be extended
to the case where the measure is replaced by a distribution. 
\Cref{sec:sum} describes the resulting numerical algorithm 
for approximating functions of the form $f(x) = \int_a^b x^\mu \sigma(\mu)\, d\mu$
by expansions in singular powers. 
Finally, \Cref{sec:NE} presents several numerical
experiments which demonstrate the performance of our algorithm.

\section{Mathematical Preliminaries}
\label{sec:pre}

In this section, we provide some mathematical preliminaries.

\subsection{The Truncated Laplace Transform}
\label{sec:pre1}
Throughout this paper, we utilize the analytical and numerical properties of
the truncated Laplace transform, 
which have been previously presented in 
\cite{laplace}. Here, 
we briefly review the key 
properties.

For a function $f(x) \in L^2[a,b]$, where $0<a<b<\infty$,
the truncated Laplace transform $\mathcal{L}_{a,b}$
is a linear mapping 
$L^2[a,b] \rightarrow L^2[0,\infty)$, defined by the formula
\begin{align}
(\mathcal{L}_{a,b}(f))(x)=\int_{a}^{b} e^{-xt} f(t) \, dt.
\end{align}
We introduce the operator $T_{\gamma}\colon L^2[0,1] \rightarrow L^2[0,\infty)$,
defined by the formula
\begin{align}
\label{eq:Top}
(T_{\gamma}(f))(x)=\int_{0}^{1} e^{-x(t+\frac{1}{\gamma-1})} f(t) \, dt,
\end{align}
so that $T_{\gamma}$ is the truncated Laplace transform shifted
from $L^2[a,b]$ to $L^2[0,1]$, 
where $\gamma=\frac{b}{a}$.
It is clear that $\mathcal{L}_{a,b}$ and $T_{\gamma}$ are compact operators (see, for example~\cite{compact}).

As pointed out in~\cite{laplace}, the singular value decomposition of the operator $T_{\gamma}$
consists of an orthonormal sequence of right singular 
functions $\{u_i\}_{i=0,1,\ldots,\infty} \in L^2[0,1]$, 
an orthonormal sequence of left singular 
functions $\{v_i\}_{i=0,1,\ldots,\infty} \in L^2[0,\infty)$, 
and a discrete sequence of singular values $\{\alpha_i\}_{i=0,1,\ldots,\infty}
\in \mathbb{R}$. 
The operator $T_{\gamma}$ can be rewritten as 
\begin{align}
\label{eq:svdT}
(T_{\gamma}(f))(x)=\sum_{i=0}^{\infty} \alpha_i \Bigl{(} \int_{0}^{1}
u_i(t)f(t) \, dt \Bigr{)} v_i(x),
\end{align}
for any function $f(x) \in L^2[0,1]$.
Note that
\begin{align}
\label{eq:Tu}
T_{\gamma}(u_i)&=\alpha_i v_i,
\end{align}
and
\begin{align}
\label{eq:Tv}
T^*_{\gamma}(v_i)&=\alpha_i u_i,
\end{align}
for all $i=0$, $1$, \ldots,
where $T^*_{\gamma}\colon L^2[0,\infty) \rightarrow L^2[0,1]$ is the adjoint of $T_{\gamma}$, defined by
\begin{align}
\label{eq:Tstar}
(T^*_{\gamma}(g))(t)=\int_{0}^{\infty} e^{-x(t+\frac{1}{\gamma-1})} g(x) \, dx.
\end{align}
Similarly, $T^*_{\gamma}$ can be rewritten as
\begin{align}
\label{eq:svdTstar}
(T^*_{\gamma}(g))(t)=\sum_{i=0}^{\infty} \alpha_i \Bigl{(} \int_{0}^{\infty}
v_i(x)g(x) \, dx \Bigr{)} u_i(t).
\end{align}
Furthermore,
for all $i=0$, $1$, \ldots,
\begin{align}
  \alpha_i > \alpha_{i+1} \geq 0,
\end{align}
and the sequence $\{\alpha_i\}_{i=0,1,\ldots,\infty}$ decays
exponentially fast in $i$, where the decay rate is described in \Cref{thm:alpha}.

Assume that the left singular functions of $\mathcal{L}_{a,b}$ are denoted by
$\tilde{v}_0$, $\tilde{v}_1$, \ldots, and that 
the right singular functions of $\mathcal{L}_{a,b}$ are denoted by
$\tilde{u}_0$, $\tilde{u}_1$, $\ldots$ .
Then, the relations between the singular functions of  
$\mathcal{L}_{a,b}$ and those of $T_{\gamma}$ are given by the formulas  
\begin{align}
u_i(t) &= \sqrt{b-a} \ \tilde{u}_i(a+(b-a)t),
\end{align}
and
\begin{align}
v_i(x) &= \frac{1}{\sqrt{b-a}} \ \tilde{v}_i\biggl{(}\frac{x}{b-a}\biggr{)},
\end{align}
for all $i=0$, $1$, $\ldots$ .
It is observed in~\cite{laplace} that 
$\tilde{v}_0$, $\tilde{v}_1$, $\ldots$ are the eigenfunctions
of the $4$th order differential operator $\hat{D}_{\omega}$,
defined by
\begin{align}
\hspace{-27mm} \Bigl{(}\hat{D}_{\omega}(f)\Bigr{)} (\omega)=
-\frac{\text{d}^2}{\text{d}{\omega}^2}\biggl{(} {\omega}^2 
\frac{\text{d}^2}{\text{d}{\omega}^2} f(\omega) \biggr{)}
+(a^2+b^2) \frac{\text{d}}{\text{d}{\omega}}\bigg{(} {\omega}^2  
\frac{\text{d}}{\text{d}{\omega}}f(\omega)\biggr{)}+(-a^2b^2{\omega}^2 +
2a^2)f(\omega),
\end{align}
where $f \in C^4[0,\infty) \cap L^2[0,\infty)$,
and that
$\tilde{u}_0$, $\tilde{u}_1$, $\ldots$ are the eigenfunctions of the $2$nd order
differential operator $\tilde{D}_t$,
defined by
\begin{align}
\Bigl{(}\tilde{D}_t(f)\Bigr{)} (t)=
\frac{\text{d}}{\text{d}{t}}\biggl{(} (t^2-a^2)(b^2-t^2) 
\frac{\text{d}}{\text{d}t} f(t) \biggr{)}
-2(t^2-a^2)f(t),
\end{align}
where $f\in C^2[a,b]$.
Thus, $\tilde{v}_i$, for all $i=0$, $1$, \ldots, 
can be evaluated by finding the solution to
the differential equation 
\begin{align}
\hspace{-25mm} 
-\frac{\text{d}^2}{\text{d}{\omega}^2}\biggl{(} {\omega}^2 
\frac{\text{d}^2}{\text{d}{\omega}^2} \tilde{v}_i(\omega) \biggr{)}
+(a^2+b^2) \frac{\text{d}}{\text{d}{\omega}}\bigg{(} {\omega}^2  
\frac{\text{d}}{\text{d}{\omega}}\tilde{v}_i(\omega)\biggr{)}+(-a^2b^2{\omega}^2 +
2a^2)\tilde{v}_i(\omega) =\hat{\chi}_i \tilde{v}_i(\omega),
\end{align}
where $\hat{\chi}_i$ is the $i$th eigenvalue of the differential operator
$\hat{D}_{\omega}$.
Similarly, 
$\tilde{u}_i$, for all $i=0$, $1$, \ldots, 
can be evaluated by finding the solution to
the differential equation 
\begin{align}
\frac{\text{d}}{\text{d}{t}}\biggl{(} (t^2-a^2)(b^2-t^2) 
\frac{\text{d}}{\text{d}t} \tilde{u}_i(t) \biggr{)}
-2(t^2-a^2)\tilde{u}_i(t) = \tilde{\chi}_i \tilde{u}_i(t),
\end{align}
where $\tilde{\chi}_i$ is the $i$th eigenvalue of the differential operator
$\tilde{D}_{t}$.
It is known that the singular functions  
$\tilde{v}_i$ and $\tilde{u}_i$ (and thus ${v}_i$ and ${u}_i)$ have exactly $i$ distinct roots,
for all $i=0$, $1$, $\ldots$ .

A procedure for the evaluation of the singular functions and singular values 
of the operator 
$T_{\gamma}$
%%%, as well as the roots of the singular functions,
is described comprehensively in~\cite{laplace} and~\cite{laplace2}.  

The following lemma states 
that, for any function which is analytic and bounded within 
a Bernstein ellipse $E_{\rho}$,
the coefficients in its Chebyshev expansion decay at an exponential rate 
(see Chapter 8 of \cite{nick}). We will use it to prove that the singular values
$\alpha_0$, $\alpha_1$, $\ldots$ of $T_{\gamma}$ decay exponentially.

\begin{lemma}
\label{lem:ak}
Let a function f analytic in $[-1,1]$ be analytically continuable to the open
Bernstein ellipse $E_{\rho}$, where it satisfies $|f(x)|\leq M$ for some $M$. Then its Chebyshev
coefficients satisfy $|a_0| \leq M$ and 
\begin{align}
|a_k|\leq 2M {\rho}^{-k}, \qquad k \geq 1. 
\end{align}
\end{lemma}

The following theorem demonstrates that the singular values of $T_{\gamma}$
decay at an exponential rate, which decreases as $\gamma$ 
increases.
%%%
\begin{theorem}
\label{thm:alpha}
Suppose that $\alpha_0$, $\alpha_1$, $\ldots$ are the singular values 
of $T_{\gamma}$, for some $\gamma >1$. Then, $\alpha_n$ 
decays exponentially with $n$, and the decay rate decreases with increasing $\gamma$.
Specifically, for each $c \in (0,1)$,
\begin{align}
\alpha_{n} \leq \sqrt{\frac{\gamma-1}{(1-c)(1-\rho)}}{\rho}^{-\frac{n}{2}},
\end{align}
where 
\begin{align}
\rho=1+\frac{4c}{\gamma-1}+\sqrt{\biggl{(}\frac{4c}{\gamma-1}\biggr{)}^2+\frac{8c}{\gamma-1}}.
\end{align}
\end{theorem}

\begin{proof}
We can view $T_{\gamma,[-1,1]}$ as 
the operator $T_{\gamma}$ shifted from 
$L^2[0,1]$ to $L^2[-1,1]$,
defined by 
\begin{align}
(T_{\gamma,[-1,1]}(f))(x)=\int_{-1}^{1} e^{-x(\frac{t+1}{2}+\frac{1}{\gamma-1})} f(t) \, dt,
\end{align}
while its adjoint $T^*_{\gamma,[-1,1]} \colon L^2[0,\infty)\rightarrow L^2[-1,1]$
is given by 
\begin{align}
(T^*_{\gamma,[-1,1]}(g))(t)=\int_{0}^{\infty} e^{-x(\frac{t+1}{2}+\frac{1}{\gamma-1})} g(x) \, dx.
\end{align}
%%%We consider $T_{\gamma,[-1,1]}$ throughout the proof, as the proof
%%%follows 
%%%the same reasoning as the proof using $T_{\gamma}$.
It follows that the self-adjoint operator 
$S := T^*_{\gamma,[-1,1]}T_{\gamma,[-1,1]} \colon L^2[-1,1]\rightarrow L^2[-1,1]$ is given by
%%%\label{eq:Tstar}
\begin{align}
(S(f))(x)&=\int_{0}^{\infty} e^{-y(\frac{x+1}{2}+\frac{1}{\gamma-1})}
\Bigl{[}\int_{-1}^{1} e^{-y(\frac{t+1}{2}+\frac{1}{\gamma-1})}f(t)\, dt\Bigr{]}  \, dy \notag \\
&=\int_{-1}^{1}\Bigl{[} \int_{0}^{\infty} e^{-y(\frac{x+1}{2}+\frac{1}{\gamma-1})}
e^{-y(\frac{t+1}{2}+\frac{1}{\gamma-1})}\, dy\Bigr{]} f(t) \, dt \notag \\
&=\int_{-1}^{1}\Bigl{[} \int_{0}^{\infty} e^{-y(\frac{x+t}{2}+\frac{\gamma+1}{\gamma-1})}
\, dy\Bigr{]} f(t) \, dt \notag \\
&=\int_{-1}^{1}\frac{2(\gamma-1)}{(x+t)(\gamma-1)+2(\gamma+1)}
f(t) \, dt.
\end{align}
We now let 
\begin{align}
K(x,t)=\frac{2(\gamma-1)}{(x+t)(\gamma-1)+2(\gamma+1)},
\end{align}
so that 
\begin{align}
(S(f))(x)&=\int_{-1}^{1} K(x,t) f(t) \, dt.
\end{align}
Notice that, for each fixed $x \in [-1,1]$, 
$K(x,t)$ has a pole
at $t=-x -2-\frac{4}{\gamma-1}<-1$, where $\gamma \in (1,\infty)$.
Thus, for each fixed $x\in [-1,1]$, $K(x,t)$ is analytic on $[-1,1]$, and admits an analytic continuation to a Berstein ellipse $E_{\rho}$  
with the semi-major axis $a$, where
\begin{align}
a=\frac{1}{2}(\rho+\frac{1}{\rho}).
\end{align}
We observe that
$a=0-(-1-c\frac{4}{\gamma-1})=1+c\frac{4}{\gamma-1}$, for some
$c \in (0,1)$. 
Thus,
\begin{align}
1+c\frac{4}{\gamma-1}=\frac{1}{2}(\rho+\frac{1}{\rho}),
\end{align}
which implies 
\begin{align}
\label{eq:rho}
\rho=1+\frac{4c}{\gamma-1}\pm\sqrt{\biggl{(}\frac{4c}{\gamma-1}\biggr{)}^2+\frac{8c}{\gamma-1}},
\end{align}
where $\gamma \in (1,\infty)$.
Since the definition of $E_{\rho}$ assumes $\rho>1$, we have  
\begin{align}
\rho=1+\frac{4c}{\gamma-1}+\sqrt{\biggl{(}\frac{4c}{\gamma-1}\biggr{)}^2+\frac{8c}{\gamma-1}},
\end{align}
where the parameter $\rho$ decreases with increasing $\gamma$.
It follows that $K(x,z)$ can be expanded as 
\begin{align}
K(x,z)=\sum_{j=0}^{\infty} a_j(x)T_{j}(z),
\end{align}
for $z \in E_{\rho}$.
Noticing that $|K(x,z)|$ attains its maximum value when $z+x=-2-\frac{4c}{\gamma-1}$,
we have
\begin{align}
|K(x,z)|&=\biggl{|}\frac{2(\gamma-1)}{(x+z)(\gamma-1)+2(\gamma+1)}\biggr{|} \notag \\
&\leq \biggl{|} \frac{2(\gamma-1)}{(-2-\frac{4c}{\gamma-1})(\gamma-1)+2(\gamma+1)}\biggr{|} \notag \\
&=\frac{\gamma-1}{ 2-2c},
\end{align}
where $c \in (0,1)$ and $\gamma \in (1,\infty)$,
so $|K(x,z)|$ is uniformly bounded by $\frac{\gamma-1}{ 2-2c}$ for $x\in [-1,1]$ and $z \in E_{\rho}$. 
Thus, \Cref{lem:ak} implies that
\begin{align}
\sup_{-1\leq x \leq 1} |a_j(x)| \leq \frac{\gamma-1}{ 1-c}{\rho}^{-j},
\end{align}
where $\rho$ is given by \Cref{eq:rho}.
For each $n$, we let 
\begin{align}
K_n(x,z)=\sum_{j=0}^{n-1} a_j(x)T_{j}(z),
\end{align}
and we define $S_n \colon L^2[-1,1]\rightarrow L^2[-1,1]$ by
\begin{align}
(S_n(f))(x)
&=\int_{-1}^{1}K_{n}(x,t)
f(t) \, dt.
\end{align}
Since the Chebyshev polynomials are bounded in uniform norm by $1$
and the size of coefficients decay exponentially fast, we have
\begin{align}
\hspace{-10mm} |K(x,z)-K_{n}(x,z)|=\biggl{|}\sum_{j=n}^{\infty}{a_j(x)T_j(z)}\biggr{|}
\leq \sum_{j=n}^{\infty} {|}a_j(x){|} \leq \frac{\gamma-1}{(1-c)(1-\rho)} {\rho}^{-n}.
\end{align}
It follows that
\begin{align}
\norm{S-S_{n}} \leq \frac{2(\gamma-1)}{(1-c)(1-\rho)} {\rho}^{-n},
\end{align}
and we have 
\begin{align}
\lambda_{n} \leq \norm{S-S_{n}} \leq \frac{2(\gamma-1)}{(1-c)(1-\rho)} {\rho}^{-n},
\end{align}
where $\lambda_1\geq \lambda_2\geq  \ldots$ are the singular values of $S$, because
the $n$-th singular value is the optimal error of the 
rank-$(n-1)$ approximation to $S$.
Recalling that
$\alpha_0 \geq \alpha_1 \geq \ldots$ are the singular values of $T_{\gamma}$,
which are the same as the singular values of
$T_{\gamma, [-1,1]}$, and they satisfy
\begin{align}
2\alpha^2_n = \lambda_n,
\end{align}
we have 
\begin{align}
\alpha_{n} \leq \sqrt{\frac{\gamma-1}{(1-c)(1-\rho)}}{\rho}^{-\frac{n}{2}}.
\end{align}
\end{proof}

%%%\begin{proof}
%%%It follows from~\Cref{eq:Top} and~\Cref{eq:Tstar} that the self-adjoint operator 
%%%$S := T^*_{\gamma}T_{\gamma} \colon L^2[0,1]\rightarrow L^2[0,1]$ is given by
%%%%%%\label{eq:Tstar}
%%%\begin{align}
%%%(S(f))(x)&=\int_{0}^{\infty} e^{-y(x+\frac{1}{\gamma-1})}
%%%\Bigl{[}\int_{0}^{1} e^{-y(t+\frac{1}{\gamma-1})}f(t)\, dt\Bigr{]}  \, dy \notag \\
%%%&=\int_{0}^{1}\Bigl{[} \int_{0}^{\infty} e^{-y(x+\frac{1}{\gamma-1})}
%%%e^{-y(t+\frac{1}{\gamma-1})}\, dy\Bigr{]} f(t) \, dt \notag \\
%%%&=\int_{0}^{1}\Bigl{[} \int_{0}^{\infty} e^{-y(x+t+\frac{2}{\gamma-1})}
%%%\, dy\Bigr{]} f(t) \, dt \notag \\
%%%&=\int_{0}^{1}\frac{1}{x+t+\frac{2}{\gamma-1}}
%%%f(t) \, dt.
%%%\end{align}
%%%We now let 
%%%\begin{align}
%%%K(x,t)=\frac{1}{x+t+\frac{2}{\gamma-1}},
%%%\end{align}
%%%so that 
%%%\begin{align}
%%%(S(f))(x)&=\int_{0}^{1} K(x,t) f(t) \, dt.
%%%\end{align}
%%%\end{proof}

\subsection{The Truncated Singular Decomposition (TSVD)}

The singular value decomposition (SVD) of a matrix $A\in
\mathbb{R}^{m \times n}$ is defined by 
\begin{align}
A=U\Sigma V^T,
\end{align}
where the left and right matrices $U\in \mathbb{R}^{m \times m}$ and $V \in
\mathbb{R}^{n \times n}$ are orthogonal, and the matrix $\Sigma \in
\mathbb{R}^{m \times n}$ is a diagonal matrix with the singular values of
$A$ on the diagonal, in descending order, so that
\begin{align}
\Sigma=\text{diag} (\sigma_1, \sigma_2, \ldots, \sigma_{\min\{m,n\}}).
\end{align}
Let $r \leq \min\{m,n\}$ denote the rank of $A$, which is equal to the number of
nonzero entries on the diagonal, and suppose that $k \le r$. The $k$-truncated
singular value decomposition ($k$-TSVD) of $A$ is defined as 
\begin{align}
\label{eq:Ak}
A_k=U\Sigma_k V^{T},
\end{align}
where 
\begin{align}
\label{eq:Sigmak}
\Sigma_k=\text{diag} (\sigma_1, \ldots, \sigma_k, 0, 
\ldots, 0) \in \mathbb{R}^{m \times n}.
\end{align}
The pseudo-inverse of $A_k$ is defined by
\begin{align}
A_k^{\dagger}=V\Sigma_k^{\dagger} U^{T}\in \mathbb{R}^{n \times m},
\end{align}
where 
\begin{align}
\Sigma_k^{\dagger}=\text{diag} ({\sigma_1}^{-1}, \ldots, {\sigma_k}^{-1},0,\ldots, 0) 
\in \mathbb{R}^{n \times m}.
\end{align}

The following theorem bounds the sizes of the solution and residual, when a
perturbed linear system is solved using the TSVD.  It follows the same
reasoning as the proof of Theorem~3.4 in~\cite{svd},
and can be viewed as a more explicit version of Lemma 3.3 in  
\cite{coppe}.

%%%To solve an 
%%%ill-conditioned linear system, a new 
%%%perturbed linear system is defined, and the numerical 
%%%solution is obtained using 
%%%the TSVD regularization method.
%%%Consequently, 
%%%the norm
%%%of the numerical solution to a perturbed linear system
%%%is relatively small when compared to the true solution.
%%%The result is established in the following theorem,
%%%which is a modification to 
%%%Theorem 3.4 in \cite{svd}.

\begin{theorem}
\label{thm:tsvd}
Suppose that $A \in \mathbb{R}^{m \times n}$, where $m\ge n$, and let
$\sigma_1 \ge \sigma_2 \ge \cdots \ge \sigma_n$ be the singular values 
of $A$. Let $b \in \mathbb{R}^m$, and suppose that $x\in \mathbb{R}^n$ satisfies
  \begin{align}
Ax = b.
  \end{align}
Let $\epsilon > 0$, and suppose further that 
  \begin{align}
\hat x_k = (A+E)^\dagger_k (b+e),
  \end{align}
where $(A+E)^\dagger_k$ is the pseudo-inverse of the $k$-TSVD of $A+E$, so
that 
  \begin{align}
\hat \sigma_{k} \ge \epsilon \ge \hat \sigma_{k+1},
  \end{align}
where $\hat \sigma_k$ and $\hat \sigma_{k+1}$ are the $k$th and $(k+1)$th
largest singular values of $A+E$, defining $\hat \sigma_{n+1} :=0$, 
and where $E\in \mathbb{R}^{m\times n}$ and $e\in
\mathbb{R}^m$, with $\norm{E}_2 < \epsilon/2$. Then
  \begin{align}
\norm{\hat x_k}_2 \le \frac{1}{\hat \sigma_k} (2\epsilon\norm{x}_2 +
\norm{e}_2) + \norm{x}_2
  \end{align}
and
  \begin{align}
\norm{A \hat x_k - b}_2 \le 5\epsilon \norm{x}_2 + \frac{3}{2}
\norm{e}_2.
  \end{align}

\end{theorem}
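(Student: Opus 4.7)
The plan is to expand everything in the SVD of the perturbed matrix $\hat A = A + E$. Write $\hat A = \hat U \hat \Sigma \hat V^T$, so that $(A+E)_k^\dagger = \hat V \hat \Sigma_k^\dagger \hat U^T$. The key algebraic observation is that $\hat \Sigma_k^\dagger \hat \Sigma$ and $\hat \Sigma \hat \Sigma_k^\dagger$ are diagonal matrices whose nonzero entries are $k$ ones, so after multiplying by $\hat V$ and $\hat U$ respectively they become the orthogonal projections $\Pi_k$ and $\Psi_k$ onto the leading $k$ right and left singular vectors of $\hat A$. This gives a clean way to isolate the part of $\hat x_k$ that recovers $x$ from the part that amplifies the perturbation.

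For the first inequality, I would substitute $b = Ax = \hat A x - Ex$ into the definition of $\hat x_k$, which yields
\begin{align}
\hat x_k = \hat V \hat \Sigma_k^\dagger \hat U^T(\hat A x + e - Ex) = \Pi_k x + \hat V \hat \Sigma_k^\dagger \hat U^T(e - Ex).
\end{align}
Since $\Pi_k$ is an orthogonal projection, $\|\Pi_k x\|_2 \le \|x\|_2$, and since $\|\hat V \hat \Sigma_k^\dagger \hat U^T\|_2 = 1/\hat\sigma_k$, the triangle inequality together with $\|E\|_2 < \epsilon/2$ immediately yields the stated bound on $\|\hat x_k\|_2$ (in fact with some room to spare).

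For the residual, I would apply $\hat A$ to the decomposition above. Using $\hat A \hat V \hat \Sigma_k^\dagger \hat U^T = \Psi_k$ and $\hat A \Pi_k = \hat A_k$, I get
\begin{align}
\hat A \hat x_k = \hat A_k x + \Psi_k(e - Ex).
\end{align}
Then, writing $A \hat x_k - b = \hat A \hat x_k - E \hat x_k - \hat A x + Ex$, a short rearrangement gives
\begin{align}
A \hat x_k - b = (\hat A_k - \hat A)x + \Psi_k e + (I - \Psi_k) E x - E \hat x_k.
\end{align}
Now I would bound each term: $\|\hat A_k - \hat A\|_2 = \hat\sigma_{k+1} \le \epsilon$, $\|\Psi_k e\|_2 \le \|e\|_2$, $\|(I-\Psi_k)Ex\|_2 \le (\epsilon/2)\|x\|_2$, and the last term $\|E\hat x_k\|_2 \le (\epsilon/2)\|\hat x_k\|_2$, where I substitute the bound on $\|\hat x_k\|_2$ just derived and use $\hat\sigma_k \ge \epsilon$ to absorb $1/\hat\sigma_k$ factors into constants. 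Summing gives the required bound, with some slack built in.

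The main obstacle is mostly bookkeeping: correctly identifying $\hat \Sigma_k^\dagger \hat \Sigma$ and $\hat \Sigma \hat \Sigma_k^\dagger$ as the two projections that appear, and carefully tracking where the singular values $\hat \sigma_k$ and $\hat \sigma_{k+1}$ enter. The only nontrivial step is handling $\|E\hat x_k\|_2$, since it requires using the bound on $\|\hat x_k\|_2$ together with $\hat\sigma_k \ge \epsilon$ in order to keep all terms of order $\epsilon\|x\|_2$ or $\|e\|_2$; everything else is a direct application of the triangle inequality and the operator norm bounds for projections and TSVD pseudo-inverses.
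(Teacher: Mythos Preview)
Your proof is correct and in fact cleaner than the paper's. Both arguments ultimately rest on the identities $\hat A_k^\dagger \hat A = \Pi_k$ and $\hat A \hat A_k^\dagger = \Psi_k$, but you and the paper organize the algebra differently. The paper introduces the auxiliary vector $x_k = A_k^\dagger b$ (the TSVD solution built from the \emph{unperturbed} matrix) and the residual $r_k = (A-A_k)x$, then expresses $\hat x_k$ and $A\hat x_k - b$ in terms of $x_k$ and $r_k$; this forces them to bound $\sigma_{k+1}$ rather than $\hat\sigma_{k+1}$, which in turn requires the Bauer--Fike inequality $\abs{\hat\sigma_j - \sigma_j} \le \norm{E}_2$. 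You instead substitute $b = \hat A x - Ex$ directly and work entirely in the perturbed SVD basis, so the only singular values that appear are $\hat\sigma_k$ and $\hat\sigma_{k+1}$, both of which are controlled by hypothesis; no perturbation theorem for singular values is needed. A side benefit is that your constants are sharper: your route actually gives $\norm{\hat x_k}_2 \le \norm{x}_2 + \tfrac{1}{\hat\sigma_k}(\tfrac{\epsilon}{2}\norm{x}_2 + \norm{e}_2)$ and $\norm{A\hat x_k - b}_2 \le \tfrac{9}{4}\epsilon\norm{x}_2 + \tfrac{3}{2}\norm{e}_2$, both well inside the stated bounds. The paper's detour through $x_k$ is the standard device in the TSVD-regularization literature (cf.\ Hansen), but for this particular statement your direct substitution is the shorter path.
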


\begin{proof}
Let $\sigma_1 \ge \sigma_2 \ge \cdots \ge \sigma_n$ denote the singular values 
of $A$, and let $A_k$ be the $k$-TSVD of $A$. We observe that $A_k x =
b-(A-A_k)x$.  Letting $r_k = (A-A_k)x$ denote the residual, we see that
$\norm{r_k}_2 \le \sigma_{k+1}\norm{x}_2$ and that $b - A_k x = r_k$, defining
$\sigma_{n+1} :=0$.  Let
$x_k = A_k^\dagger b$.  Clearly, $b - Ax_k = r_k$ and $\norm{x_k}_2 \le
\norm{x}_2$.

Let $\hat A := A+E$. We see that
  \begin{align}
\hat x_k &= \hat A_k^\dagger (b+e) \notag \\
&= \hat A_k^\dagger (A x_k + r_k + e) \notag \\
&= \hat A_k^\dagger (\hat A x_k - E x_k + r_k + e) \notag \\
&= \hat A_k^\dagger (- E x_k + r_k + e) 
  + \hat A_k^\dagger \hat A x_k \notag \\
&= \hat A_k^\dagger (- E x_k + r_k + e) 
  + \hat A_k^\dagger \hat A_k x_k.
    \label{eq:xkform}
  \end{align}
Taking norms on both sides and observing that $\hat A_k^\dagger \hat A_k$ is 
an orthogonal projection,
  \begin{align}
\norm{\hat x_k}_2 &\le  \norm{\hat{A}_k^\dagger}_2 ( \norm{E}_2\norm{x_k}_2 +
\norm{e}_2 +\norm{r_k}_2) + \norm{x_k}_2 \notag \\
  &\le  \norm{\hat{A}_k^\dagger}_2 ( \norm{E}_2\norm{x_k}_2 +
\norm{e}_2 +\sigma_{k+1}\norm{x}_2) + \norm{x_k}_2.
  \end{align}
Letting $\hat\sigma_1 \ge \hat\sigma_2 \ge \cdots \ge \hat\sigma_n$ denote
the singular values of $A+E$, we have by the Bauer-Fike Theorem
(see~\cite{nm}) that $\abs{\hat \sigma_j - \sigma_j} \le \norm{E}_2$
for $j=1$, $2$, \ldots, $n$.  Since $\hat \sigma_k \ge \epsilon \ge \hat
\sigma_{k+1}$ and $\norm{E}_2 < \epsilon/2$, we see that
$\sigma_{k+1} < 3\epsilon/2$. Therefore,
  \begin{align}
\norm{\hat x_k}_2 &\le  \frac{1}{\hat \sigma_k} 
  \bigl(\frac{\epsilon}{2}\norm{x}_2 + \norm{e}_2
  + \frac{3\epsilon}{2}\norm{x}_2\bigr) + \norm{x}_2 \notag \\
&= \frac{1}{\hat \sigma_k} (2\epsilon\norm{x}_2 + \norm{e}_2) + \norm{x}_2.
  \end{align}

To bound the residual, we observe that
  \begin{align}
A\hat x_k - b &= A\hat x_k - Ax_k - r_k \notag \\
  &= A(\hat x_k - x_k) - r_k \notag \\
  &= \hat A(\hat x_k - x_k) - E(\hat x_k - x_k) - r_k.
  \end{align}
From~\Cref{eq:xkform}, we have that
  \begin{align}
\hat x_k - x_k = \hat A_k^\dagger(-Ex_k + r_k + e) - (I-\hat{A}_k^\dagger
\hat{A}_k) x_k.
  \end{align}
Combining these two formulas,
  \begin{align}
&\hspace{-21mm} A\hat x_k - b = 
    \hat A\hat A_k^\dagger(-Ex_k + r_k + e) - \hat A(I-\hat{A}_k^\dagger
    \hat{A}_k) x_k- E(\hat x_k - x_k)- r_k \notag \\
&\hspace{-8mm} = \hat A_k \hat A_k^\dagger(-Ex_k + r_k + e) - \hat A(I-\hat{A}_k^\dagger
    \hat{A}_k) x_k- E(\hat x_k - x_k)- r_k \notag \\
  &\hspace{-8mm} = \hat A_k \hat A_k^\dagger(-Ex_k + e) - \hat A(I-\hat{A}_k^\dagger
    \hat{A}_k) x_k- E(\hat x_k - x_k)- (I - \hat A_k \hat A_k^\dagger) r_k.
  \end{align}
Since $\hat A(I-\hat{A}_k^\dagger \hat{A}_k) = (\hat A-\hat
A_k)(I-\hat{A}_k^\dagger \hat{A}_k)$, we see that
  \begin{align}
\hspace{-25mm} A\hat x_k - b = 
    \hat A_k \hat A_k^\dagger(-Ex_k + e) - (\hat A - \hat A_k)(I-\hat{A}_k^\dagger
    \hat{A}_k) x_k- E(\hat x_k - x_k)- (I - \hat A_k \hat A_k^\dagger) r_k.
  \end{align}
Taking norms on both sides and observing that $\hat A_k \hat A_k^\dagger$
and $(I-\hat{A}_k^\dagger \hat{A}_k)$ are orthogonal projections,
  \begin{align}
\hspace*{-5em} \norm{A\hat x_k - b}_2  &\le
  2\norm{E}_2 \norm{x_k}_2 + \norm{e}_2 + \hat\sigma_{k+1}\norm{x_k}_2 +
  \norm{E}_2 \norm{\hat x_k}_2 + \norm{r_k}_2  \notag \\
&\le
  \frac{7}{2}\epsilon\norm{x}_2 + \norm{e}_2
  + \frac{1}{2}\epsilon \norm{\hat x_k}_2  \notag \\
&\le
  5\epsilon\norm{x}_2 + \frac{3}{2}\norm{e}_2.
  \end{align}
\end{proof}

\section{Numerical Tools}
\label{sec:NA}
In this section, we present several numerical experiments
examining some numerical properties of 
the singular value decomposition of the shifted truncated Laplace
transform, $T_{\gamma}$.
%%%These findings are critical in the later proofs.
We make the following observations:
%%%

\begin{enumerate}
\item Note that the singular values of $T_{\gamma}$ decay exponentially,
with the decay rate
decreasing as $\gamma$ increases,
as suggested by \Cref{thm:alpha}.  
The numerical experiments illustrated in \Cref{fig:alpha} show that
the singular values decay at a much greater rate than the bound
provided in \Cref{thm:alpha}. 
    \item \Cref{fig:vnorm1,fig:vnorm2,fig:unorm} show that 
    the $L^{\infty}$-norms of both the left and right singular functions
    and the $L^{1}$-norm of the left singular functions
    are small, for $\gamma \in [2,1250]$.
    \item Suppose that 
    $x_1$, $x_2$, \ldots , $x_n$ are the roots of $v_n(x)$, and that
    $t_1$, $t_2$, \ldots , $t_n$ are the roots of $u_n(t)$. Let the weights
    ${w}_1$, ${w}_2$, \ldots , ${w}_n$ 
    and $\tilde{w}_1$, $\tilde{w}_2$, \ldots , $\tilde{w}_n$ satisfy
    \begin{align}
    \label{eq:weight1defcopy}
    \int_{0}^{\infty} v_i(x) \, dx= 
    \sum_{k=1}^{n} {w}_k v_i(x_k),
    \end{align}
    and
    \begin{align}
    \label{eq:weight2defcopy}
    \int_{0}^{1} u_i(t) \, dt= 
    \sum_{k=1}^{n} \tilde{w}_k u_i(t_k),
    \end{align}
    for all $i=0$, $1$, \ldots , $n-1$. Then the weights are all positive. 
    Moreover, \Cref{fig:vw1,fig:uw} show that 
    $\max_{1\leq k \leq n} \sqrt{w_k}$ and $\norm{\tilde{w}}_1$ 
    are small, for $\gamma \in [2,1250]$.
%%%\end{enumerate}
%%%
%%%\begin{conjecture}
%%%\label{con:alpha}
%%%Suppose that $\alpha_0$, $\alpha_1$, \ldots are the singular values 
%%%of $T_{\gamma}$, for some $\gamma >1$. Then, 
%%%\begin{align}
%%%\label{eq:alphabound}
%%%\alpha_{n} \leq M{\rho}^{-{n}},
%%%\end{align}
%%%where $M=6$ and $\rho=C^{{1}/{{(\log{\gamma})}^{0.6}}}$. 
%%%\end{conjecture}
%%%The following observation defines some terms 
%%%involving the norms of  
%%%the left and right singular functions of $T_{\gamma}$, as well as the 
%%%the weights involved in the quadratures of the singular functions.
%%%
%%%\begin{observation}
\item Let 
\begin{align}
A^{\infty}_n &:= \sum_{i=n}^{\infty} \alpha_i \norm{v_i}_{L^{\infty}[0,\infty)}
\norm{u_i}_{L^{\infty}[0,1]}, \label{eq:Ainfty} \\
%%%
A^{1}_n &:= \sum_{i=n}^{\infty} \alpha_i \norm{v_i}_{L^{1}[0,\infty)}
\norm{u_i}_{L^{1}[0,1]}, \label{eq:A1}\\
%%%
A^{1,\infty}_n &:= \sum_{i=n}^{\infty} \alpha_i \norm{v_i}_{L^{1}[0,\infty)}
\norm{u_i}_{L^{\infty}[0,1]}, \label{eq:A1infty}\\
%%%
A^{\infty,1}_n &:= \sum_{i=n}^{\infty} \alpha_i \norm{v_i}_{L^{\infty}[0,\infty)}
\norm{u_i}_{L^{1}[0,1]}, \label{eq:Ainfty1}\\
%%%
%%%U_n &:= \max_{0\leq i \leq n-1} 
%%%\norm{u_i}_{L^{\infty}[0,1]}, \label{eq:Un}\\
U_n &:= 
\sum_{i=0}^{n-1} \norm{u_i}_{L^{\infty}[0,1]}, \label{eq:Un}\\
%%%
V_n &:= \sum_{i=0}^{n-1} \norm{v_i}_{L^{\infty}[0,\infty)}\label{eq:Vn}.
\end{align}
Numerical experiments illustrated in \Cref{fig:alpha,fig:vnorms,fig:vws,fig:unorms}
imply that 
$A^{\infty}_n$, 
$A^{1}_n$, 
$A^{1,\infty}_n$, 
and
$A^{\infty,1}_n$ are approximately equal to $\alpha_n$ and that $U_n$ and $V_n$ are
small.
Moreover, we observe that
$A^{\infty,1}_n \norm{w}_1 \approx \alpha_n 
\norm{v_n}_{L^{\infty}[0,\infty)}\norm{w}_1$, and the size of
$\norm{v_n}_{L^{\infty}[0,\infty)}\norm{w}_1$ is illustrated in
\Cref{fig:vw2}.
\end{enumerate}
\begin{figure}
  \centering
  \includegraphics[scale=0.47]{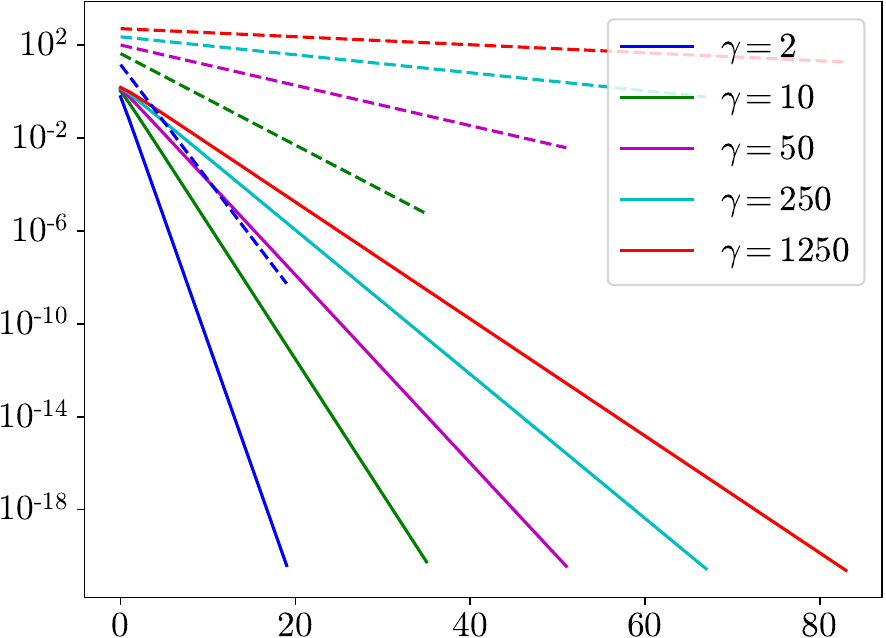}
  \caption{The singular values $\alpha_n$ of $T_{\gamma}$,
          as a function of $n$. The dashed lines indicate the
          bound defined in \Cref{thm:alpha} with $c=0.99$, for the
          corresponding values of $\gamma$.}
  \label{fig:alpha}
\end{figure}

\begin{figure}[!ht]
\centering
\subfloat[\label{fig:vnorm1} $\norm{v_n}_{L^{\infty}[0,\infty)}$,
          as a function of $n$.]{%
\includegraphics[scale=0.47]{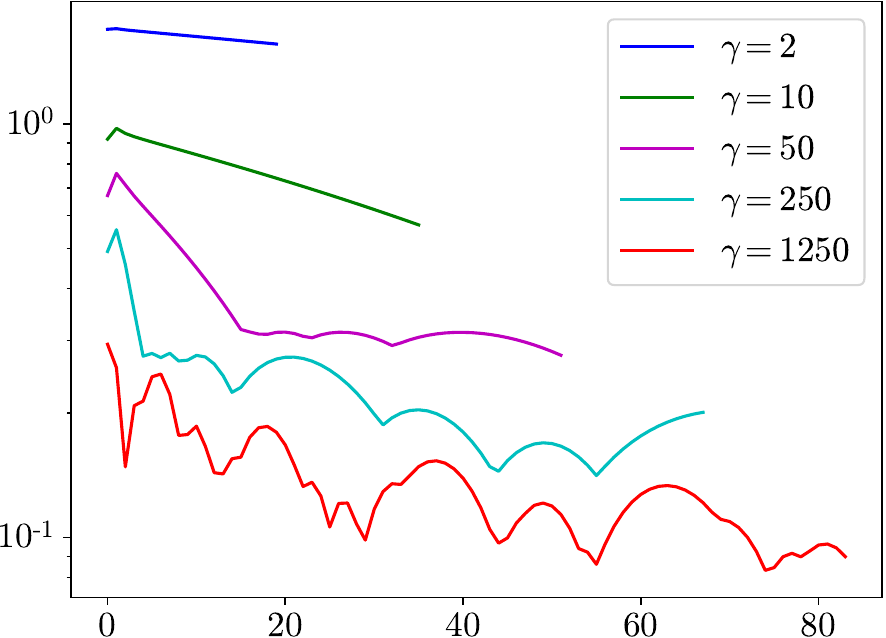}
}    
% \hfill
\subfloat[\label{fig:vnorm2} $\norm{v_n}_{L^{1}[0,\infty)}$,
          as a function of $n$.]{%
\includegraphics[scale=0.47]{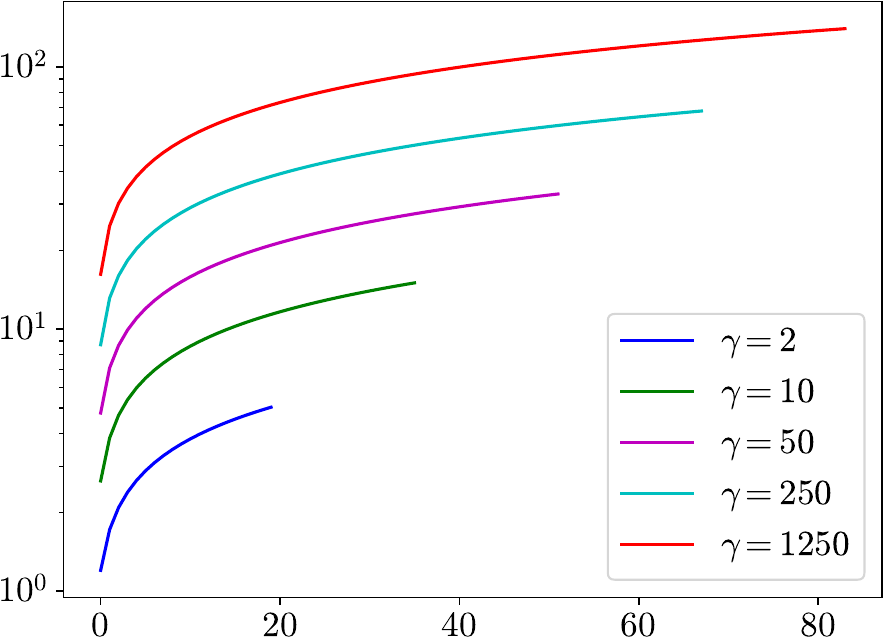}
}
\caption{}
  \label{fig:vnorms}
\end{figure}
\begin{figure}[!ht]
\centering
\subfloat[\label{fig:vw1} $\max_{1\leq k\leq n}{\sqrt{w_k}}$
          corresponding to $n$.]{%
\includegraphics[scale=0.47]{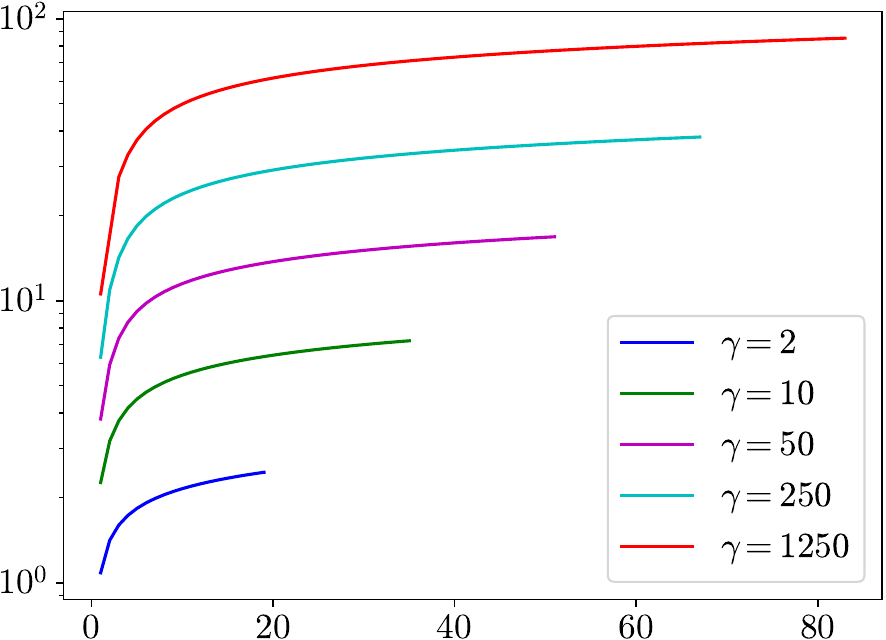}
}    
% \hfill
\subfloat[\label{fig:vw2} $\norm{v_n}_{L^{\infty}[0,\infty)}\cdot \norm{{w}}_{1}$, as
          a function of $n$.]{%
\includegraphics[scale=0.47]{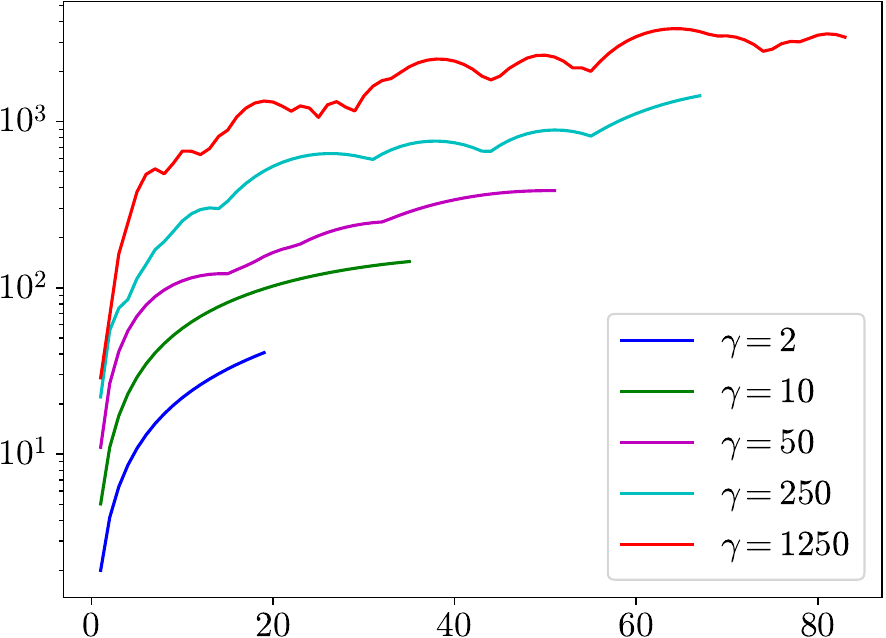}
}
\caption{}
  \label{fig:vws}
\end{figure}
\begin{figure}[!ht]
\centering
\subfloat[\label{fig:unorm} $\norm{u_n}_{L^{\infty}[0,1]}$,
          as a function of $n$.]{%
\includegraphics[scale=0.47]{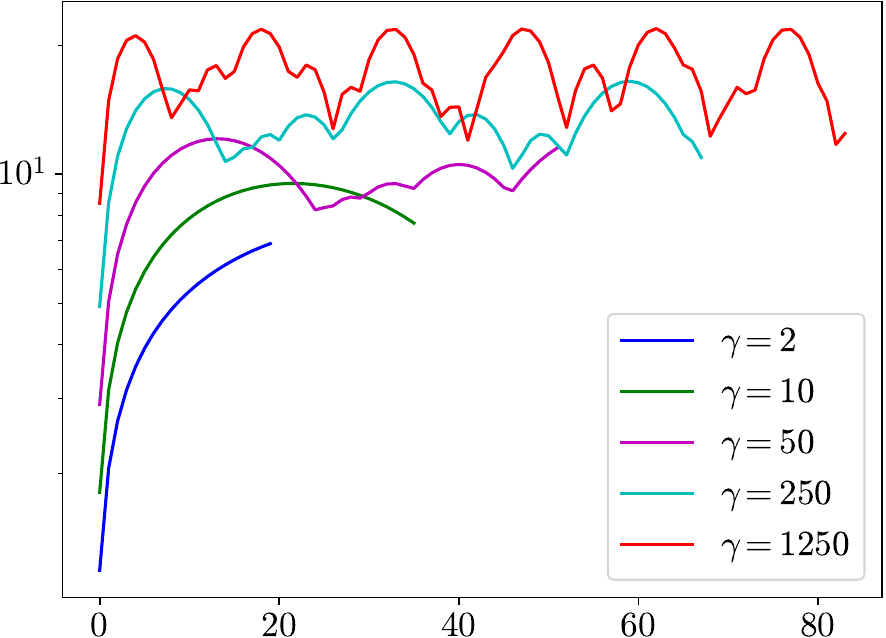}
}    
% \hfill
\subfloat[\label{fig:uw} $\norm{\tilde{w}}_{1}$
          corresponding to $n$.]{%
\includegraphics[scale=0.47]{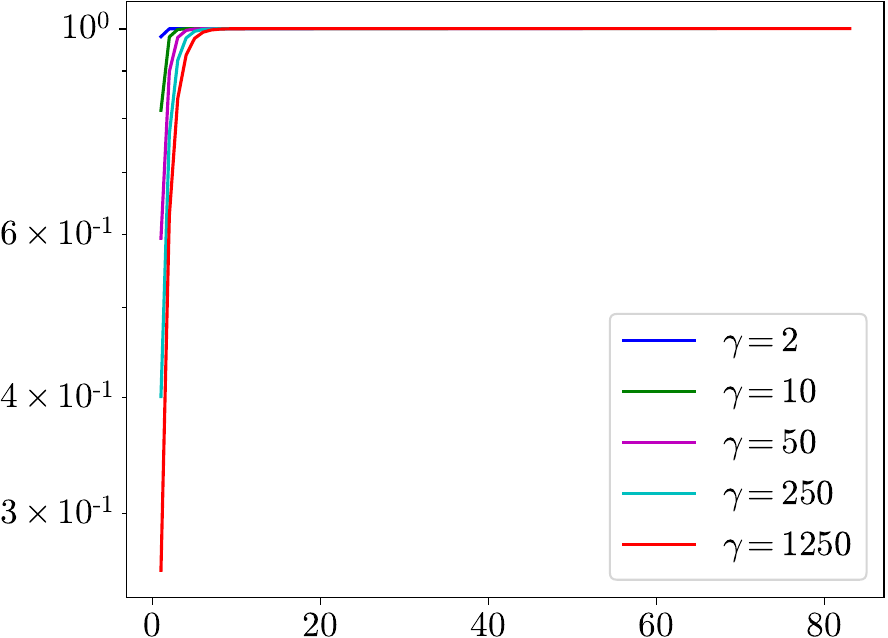}
}
\caption{}
  \label{fig:unorms}
\end{figure}

\section{Analytical Tools}
\label{sec:AA}

In this section, we present the principal analytical tools of this paper.

The following theorem states that the product of any two functions
in the range of the operator $T_{\gamma}$, introduced in \Cref{sec:pre1}, 
can be expressed as $T_{\gamma}$ applied to some $L^{\infty}[0,1]$ function,
after a change of variable.
This result directly follows from the definition of the truncated 
Laplace transform.

\begin{theorem}
Suppose that the functions $p$,
$q$ $\in L^2[0,\infty)$ are
defined by 
\begin{align}
\label{eq:p}
p(x)=\int_{0}^{1} e^{-x(t+\frac{1}{\gamma-1})} \eta(t) \, dt,
\end{align}
and 
\begin{align}
\label{eq:q}
q(x)=\int_{0}^{1} e^{-x(t+\frac{1}{\gamma-1})} \varphi(t) \, dt,
\end{align}
respectively,
for some $\eta$, $\varphi$ $\in L^2[0,1]$, and some $\gamma >1$.
Then, there exists a $\sigma \in L^{\infty}[0,1]$, 
such that
\begin{align}
p(x)\cdot q(x)=\int_{0}^{1} e^{-x(2t+\frac{2}{\gamma-1})} \sigma(t) \, dt.
\end{align}
\end{theorem}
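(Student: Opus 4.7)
The plan is to write out the product as a double integral and then recognize the resulting phase variable as a scaled sum. Specifically, I would start from
\begin{align}
p(x)\,q(x)=\int_0^1\!\int_0^1 e^{-x\bigl(s+u+\frac{2}{\gamma-1}\bigr)}\,\eta(s)\,\varphi(u)\,ds\,du,
\end{align}
and introduce the variables $r=s+u$ and $w=s$. Since $0\le s,u\le 1$, the variable $r$ ranges over $[0,2]$ and, for fixed $r$, $w$ ranges over $[\max(0,r-1),\min(1,r)]$. Carrying out this change of variables gives
\begin{align}
p(x)\,q(x)=\int_0^2 e^{-x\bigl(r+\frac{2}{\gamma-1}\bigr)}\,\rho(r)\,dr,
\end{align}
where $\rho(r):=\int_{\max(0,r-1)}^{\min(1,r)}\eta(w)\,\varphi(r-w)\,dw$ is the convolution of $\eta$ and $\varphi$ (extended by zero off $[0,1]$).

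Next I would rescale by $r=2t$, which maps $[0,2]$ to $[0,1]$ and produces exactly the exponent $-x(2t+\tfrac{2}{\gamma-1})$ appearing in the desired form. Setting $\sigma(t):=2\,\rho(2t)$, I obtain
\begin{align}
p(x)\,q(x)=\int_0^1 e^{-x\bigl(2t+\frac{2}{\gamma-1}\bigr)}\,\sigma(t)\,dt,
\end{align}
which is the representation claimed in the theorem.

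The last step is to verify that $\sigma\in L^2[0,1]$. Since $\eta,\varphi\in L^2[0,1]$, Cauchy-Schwarz applied to $\rho(r)$ gives $|\rho(r)|\le \|\eta\|_{L^2[0,1]}\,\|\varphi\|_{L^2[0,1]}$ for every $r\in[0,2]$, so $\rho\in L^\infty[0,2]$ and hence $\sigma\in L^\infty[0,1]\subset L^2[0,1]$. (Alternatively, one can invoke Young's convolution inequality, which gives $\rho\in L^2[0,2]$ directly from $\eta,\varphi\in L^1[0,1]\cap L^2[0,1]$.)

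No step here is really an obstacle: the only thing that requires any care is correctly tracking the domain of integration under the change of variables $r=s+u$, $w=s$, and then remembering the factor of $2$ from the Jacobian of $r=2t$. The result is essentially a restatement of the fact that the Laplace transform takes convolution to multiplication, specialized to compactly supported densities and combined with a dilation to return to the interval $[0,1]$.
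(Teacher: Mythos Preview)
Your proposal is correct and follows essentially the same route as the paper: write the product as a double integral, change variables to the sum $r=s+u$ to obtain a convolution, and rescale by $r=2t$ to land back on $[0,1]$ with $\sigma(t)=2\rho(2t)$. The only difference is cosmetic---the paper splits the convolution integral into the two cases $r\in[0,1]$ and $r\in[1,2]$ explicitly rather than writing $\max/\min$ limits---and you additionally verify $\sigma\in L^2[0,1]$ via Cauchy--Schwarz, which the paper omits.
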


\begin{proof}
For any $p$ and $q$ defined by \Cref{eq:p} and \Cref{eq:q}, we have
\begin{align}
\label{eq:int1}
p(x)\cdot q(x)&=\int_{0}^{1} e^{-x(t+\frac{1}{\gamma-1})} \eta(t) \, dt
    \int_{0}^{1} e^{-x(s+\frac{1}{\gamma-1})} \varphi(s) \, ds \notag \\
&=\int_{0}^{1}\int_{0}^{1}e^{-x(t+s+\frac{2}{\gamma-1})}
  \eta(t) \varphi(s) \, ds \, dt.
\end{align}
Defining a new variable $u=t+s$ and changing the range of integration,  
\Cref{eq:int1} becomes
\begin{align}
\label{eq:pq3}
&\hspace{0mm}p(x)\cdot q(x)=
  \int_{0}^{1} e^{-x(u+\frac{2}{\gamma-1})} \int_{0}^{u}
  \eta(u-s) \varphi(s) \, ds \, du \notag \\
  &\hspace{35mm}+ \int_{1}^{2} e^{-x(u+\frac{2}{\gamma-1})} \int_{u-1}^{1}
  \eta(u-s) \varphi(s) \, ds \, du.
\end{align}
Letting $v=\frac{u}{2}$, we have
\begin{align}
\label{eq:pq2}
&p(x)\cdot q(x)=
  \int_{0}^{\frac{1}{2}} e^{-x(2v+\frac{2}{\gamma-1})} \int_{0}^{2v}
  \eta(2v-s) \varphi(s) \, ds \, 2dv \notag \\
  &\hspace{35mm}+ \int_{\frac{1}{2}}^{1} e^{-x(2v+\frac{2}{\gamma-1})} \int_{2v-1}^{1}
  \eta(2v-s) \varphi(s) \, ds \, 2dv \notag \\
  &\hspace{17mm}=
  \int_{0}^{1} e^{-x(2v+\frac{2}{\gamma-1})}\sigma(v)\, dv, 
\end{align}
where
\begin{align}
 \sigma(v)&=
  2\int_{0}^{2v}
  \eta(2v-s) \varphi(s) \, ds, 
\end{align}
for $v \in [0,\frac{1}{2}]$, and 
\begin{align}
 \sigma(v)&=
  2\int_{2v-1}^{1}
  \eta(2v-s) \varphi(s) \, ds,
\end{align}
for $v \in [\frac{1}{2},1]$.
\end{proof}

\begin{observation}
\label{obs:quadvivj}
Suppose we have nodes $x_1$, $x_2$, \ldots , $x_n$ and
weights $w_1$, $w_2$, \ldots , $w_n$, such that 
\begin{align}
\hspace{-15mm} \Bigl{|}\int_0^{\infty} \int_{0}^{1} e^{-x
(t+\frac{1}{\gamma-1})} \eta(t) \, dt \, dx-
\sum_{j=1}^{n}w_j \int_{0}^{1} e^{-x_j
(t+\frac{1}{\gamma-1})} \eta(t) \, dt
\Bigr{|} \leq \epsilon\norm{\eta}_{L^{\infty}[0,1]},
\end{align}
for any $\eta$ $\in L^{\infty}[0,1]$.
Notice that 
\begin{align}
&\hspace{15mm}
\int_0^{\infty}
p(x)\cdot q(x) \, dx \notag \\
&\hspace{10mm} =
\int_0^{\infty}
p(\frac{u}{2})\cdot q(\frac{{u}}{2})\cdot \frac{1}{2}\, du \notag \\
&\hspace{10mm}=\int_{0}^{\infty}\frac{1}{2}\int_{0}^{1} e^{-u(t+\frac{1}
{\gamma-1})}\sigma(t)\, dt\, du.
\end{align}
Thus,
\begin{align}
\hspace{-23mm}
\Bigl{|}\int_0^{\infty} p(x)\cdot q(x)\, dx
-\sum_{j=1}^{n}\frac{w_j}{2}\cdot p(\frac{x_j}{2})\cdot 
q(\frac{x_j}{2})
\Bigr{|} \leq \frac{1}{2}\epsilon\norm{\sigma}_{L^{\infty}[0,1]} \leq
\epsilon\norm{\eta}_{L^2[0,1]} \norm{\varphi}_{L^2[0,1]}.
\end{align}
\end{observation}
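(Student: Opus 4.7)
The proof is essentially sketched in the statement itself: the plan is to combine the scaling $u = 2x$ in the outer integral with the preceding theorem (which rewrites a product of two shifted Laplace integrals as a single shifted Laplace integral), and then invoke the hypothesized quadrature estimate with $\eta$ replaced by the $\sigma$ produced by that theorem. The only piece that is not mechanical is the final passage from $\tfrac{1}{2}\epsilon\,\norm{\sigma}_{L^2[0,1]}$ to $\epsilon\,\norm{\eta}_{L^2[0,1]}\norm{\varphi}_{L^2[0,1]}$, which requires bounding the $L^2$ norm of $\sigma$ by the product $\norm{\eta}_{L^2[0,1]}\norm{\varphi}_{L^2[0,1]}$.

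First I would write $\int_0^{\infty} p(x)\,q(x)\,dx$, substitute $u = 2x$ to obtain $\tfrac{1}{2}\int_0^{\infty} p(u/2)\,q(u/2)\,du$, and invoke the preceding theorem to identify
\begin{align}
p(u/2) \cdot q(u/2) = \int_0^1 e^{-u(t + \tfrac{1}{\gamma-1})} \sigma(t)\,dt,
\end{align}
with $\sigma$ given by the piecewise formulas from the theorem. This puts the integrand in the exact form to which the hypothesized quadrature rule applies. Applying that rule with $\sigma$ in place of $\eta$, and again using the theorem to recognize $\int_0^1 e^{-x_j(t + \tfrac{1}{\gamma-1})} \sigma(t)\,dt = p(x_j/2)\,q(x_j/2)$, one obtains the quadrature error $\tfrac{1}{2}\epsilon\,\norm{\sigma}_{L^2[0,1]}$ for $\sum_{j=1}^n \tfrac{w_j}{2}\,p(x_j/2)\,q(x_j/2)$.

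The main obstacle, and really the only step that requires actual work, is the bound $\norm{\sigma}_{L^2[0,1]} \le 2\,\norm{\eta}_{L^2[0,1]}\norm{\varphi}_{L^2[0,1]}$. Since $\sigma$ is a truncated convolution of $\eta$ and $\varphi$, I would apply Cauchy-Schwarz pointwise. For $v \in [0,\tfrac{1}{2}]$, after the change of variable $r = 2v-s$ in the first factor,
\begin{align}
\babs{\sigma(v)}^2 \le 4 \int_0^{2v} \babs{\eta(2v-s)}^2\,ds \cdot \int_0^{2v} \babs{\varphi(s)}^2\,ds \le 4\,\norm{\eta}_{L^2[0,1]}^2 \norm{\varphi}_{L^2[0,1]}^2,
\end{align}
and the argument on $[\tfrac{1}{2},1]$ is identical. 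Hence $\norm{\sigma}_{L^{\infty}[0,1]} \le 2\,\norm{\eta}\,\norm{\varphi}$, and a fortiori $\norm{\sigma}_{L^2[0,1]} \le 2\,\norm{\eta}\,\norm{\varphi}$. Multiplying by $\tfrac{1}{2}\epsilon$ gives the stated bound, and the middle inequality in the observation ($\tfrac{1}{2}\epsilon\,\norm{\sigma}_{L^2[0,1]} \le \epsilon\,\norm{\eta}_{L^2[0,1]}\norm{\varphi}_{L^2[0,1]}$) is exactly this estimate.
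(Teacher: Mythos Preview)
Your proposal is correct and follows exactly the approach sketched in the paper's observation itself; the paper offers no separate proof, and you have filled in the only nontrivial gap (the bound $\norm{\sigma}_{L^2[0,1]} \le 2\,\norm{\eta}_{L^2[0,1]}\norm{\varphi}_{L^2[0,1]}$ via pointwise Cauchy--Schwarz) cleanly and correctly.
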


The following theorem shows that the product of
any two functions in the range of $T^*_{\gamma}$
can be expressed as 
$T^*_{\gamma}$ applied to some $L^{\infty}[0,\infty)$ function.

\begin{theorem}
\label{thm:produiuj}
Suppose that the functions $p,q \in L^2[0,1]$ are defined by 
\begin{align}
\label{eq:p2}
p(t)=\int_{0}^{\infty} e^{-x(t+\frac{1}{\gamma-1})} \eta(x) \, dx,
\end{align}
and
\begin{align}
\label{eq:q2}
q(t)=\int_{0}^{\infty} e^{-x(t+\frac{1}{\gamma-1})} \varphi(x) \, dx,
\end{align}
respectively,
for some $\eta$, $\varphi \in L^2[0,\infty)$, and some $\gamma >1$. 
Then, there exists a $\sigma \in L^{\infty}[0,\infty)$,
such that
\begin{align}
\label{eq:f2}
p(t)\cdot q(t)=\int_{0}^{\infty} e^{-x(t+\frac{1}{\gamma-1})} \sigma(x) \, dx.
\end{align}
\end{theorem}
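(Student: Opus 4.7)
The plan is to mirror the proof of the preceding product theorem on $[0,1]$, but adapted to the half-line $[0,\infty)$. The idea is again to combine the two Laplace-type integral representations into a single integral of the desired form by a substitution that exploits the fact that the kernel $e^{-(x+y)(t+\frac{1}{\gamma-1})}$ depends on $x$ and $y$ only through their sum.

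First, I would multiply \eqref{eq:p2} and \eqref{eq:q2} to obtain
\begin{align*}
p(t) q(t) = \int_0^\infty \int_0^\infty e^{-(x+y)(t+\frac{1}{\gamma-1})} \eta(x) \varphi(y)\, dx\, dy.
\end{align*}
The natural substitution is $u = x+y$ with $y$ held fixed, so that $x = u-y$ and $u$ ranges over $[y,\infty)$. This gives
\begin{align*}
p(t) q(t) = \int_0^\infty \int_y^\infty e^{-u(t+\frac{1}{\gamma-1})} \eta(u-y) \varphi(y) \, du \, dy.
\end{align*}
Swapping the order of integration via Fubini (justified by the rapid exponential decay of the kernel together with the Cauchy-Schwarz bound for the $\eta,\varphi$ factors), the $u$-integration moves to the outside and ranges over $[0,\infty)$, while the inner integral becomes a half-line convolution. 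This yields the representation \eqref{eq:f2} with
\begin{align*}
\sigma(u) = \int_0^u \eta(u-y) \varphi(y)\, dy.
\end{align*}

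The step I expect to be the main obstacle is verifying that $\sigma \in L^2[0,\infty)$. In the previous product theorem the domain was compact, so Cauchy-Schwarz on the convolution gave a uniform pointwise bound that was automatically $L^2$. Here Cauchy-Schwarz only gives the pointwise bound $|\sigma(u)| \leq \|\eta\|_{L^2[0,\infty)} \|\varphi\|_{L^2[0,\infty)}$, which is not by itself enough to conclude square-integrability at infinity. I would therefore attempt either (i) a Plancherel-based argument after extending $\eta,\varphi$ by zero to $\mathbb{R}$, writing $\|\sigma\|_{L^2}^2 = \|\hat\eta\hat\varphi\|_{L^2}^2$ and exploiting any additional decay of $\hat\eta$ and $\hat\varphi$, or (ii) a Young-type inequality argument that uses the fact that one of $\eta,\varphi$ also lies in $L^1[0,\infty)$ (which may be assumed implicitly from the context of how these functions arise, e.g.\ from the singular function decomposition of $T^*_\gamma$). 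Assuming such additional integrability is available, $\sigma$ lands in $L^2[0,\infty)$ and the theorem follows.
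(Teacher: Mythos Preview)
Your approach is essentially identical to the paper's: multiply the two integral representations, substitute $u=x+y$, change the order of integration, and identify $\sigma(u)=\int_0^u \eta(u-y)\varphi(y)\,dy$ as the half-line convolution. The paper's proof stops at exactly this point and does not address the $L^2$ membership of $\sigma$ at all.

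Your instinct that the $L^2$ claim is the delicate part is correct: the convolution of two $L^2[0,\infty)$ functions lies in $C_0[0,\infty)\cap L^\infty[0,\infty)$ by Young's inequality, but need not lie in $L^2[0,\infty)$ without additional hypotheses. The paper simply asserts $\sigma\in L^2[0,\infty)$ (and in the subsequent observation even uses the bound $\norm{\sigma}_{L^2}\le\norm{\eta}_{L^2}\norm{\varphi}_{L^2}$) without justification. So there is no gap in your derivation relative to the paper; if anything, you are being more careful than the source. Your suggested remedy (ii), invoking extra integrability of $\eta$ or $\varphi$ coming from their role as left singular functions of $T_\gamma$, is the natural way to close this, and is consistent with how the theorem is actually applied later.
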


\begin{proof}
For any $p$ and $q$ defined by \Cref{eq:p2} and \Cref{eq:q2}, we have
\begin{align}
\label{eq:int2}
p(t)\cdot q(t)&=
\int_{0}^{\infty} e^{-\omega (t+\frac{1}{\gamma-1})} \eta(\omega) \, d\omega
\int_{0}^{\infty} e^{-x(t+\frac{1}{\gamma-1})} \varphi(x) \, dx \notag  \\
&=\int_{0}^{\infty} \int_{0}^{\infty} e^{-(\omega +x)(t+\frac{1}{\gamma-1})}
\eta(\omega)\varphi(x) \, dx \, d\omega.
\end{align}
Defining $u=\omega+x$ and changing the range of integration, \Cref{eq:int2} becomes  
\begin{align}
\label{eq:pq4}
p(t)\cdot q(t)
&=\int_{0}^{\infty}e^{-u(t+\frac{1}{\gamma-1})} \int_{0}^{u}
\eta(\omega)\varphi(u-\omega) \, d\omega \, du \notag \\
&=\int_{0}^{\infty}e^{-u(t+\frac{1}{\gamma-1})}\sigma(u)\, du,
\end{align}
where 
\begin{align}
\sigma(u)
=\int_{0}^{u}
\eta(\omega)\varphi(u-\omega) \, d\omega,
\end{align}
for $u \in [0,\infty)$.
%%%and $\sigma \in L^{\infty}[0,\infty)$.
%%%Notice that it is not necessary for $\sigma$ to be  
%%%in $L^{2}[0,\infty)$.
\end{proof}

\begin{observation}
\label{obs:quaduiuj}
Suppose we have nodes $t_1$, $t_2$, \ldots , $t_n$ and
weights $\tilde{w}_1$, $\tilde{w}_2$, \ldots , $\tilde{w}_n$, such that 
\begin{align}
\label{eq:obs2}
\hspace{-20mm}
\Bigl{|}\int_0^{1} \int_{0}^{\infty} e^{-x
(t+\frac{1}{\gamma-1})} \eta(x) \, dx\, dt-
\sum_{j=1}^{n}\tilde{w}_j \int_{0}^{\infty} e^{-x
(t_j+\frac{1}{\gamma-1})} \eta(x) \, dx
\Bigr{|} \leq \epsilon\norm{\eta}_{L^{\infty}[0,\infty)},
\end{align}
for any $\eta$ $\in L^{\infty}[0,\infty)$.
Since 
\begin{align}
p(t)\cdot q(t) = \int_{0}^{\infty}e^{-x(t+\frac{1}{\gamma-1})}\sigma(x)\, dx,
\end{align}
we have
\begin{align}
\hspace{-18mm} \Bigl{|}\int_0^{1} p(t)\cdot q(t)\, dt
-\sum_{j=1}^{n}\tilde{w}_j\cdot p(t_j)\cdot 
q({t_j})
\Bigr{|} \leq \epsilon\norm{\sigma}_{L^{\infty}[0,\infty)}
\leq \epsilon\norm{\eta}_{L^2[0,\infty)}\norm{\varphi}_{L^2[0,\infty)}.
\end{align}
\end{observation}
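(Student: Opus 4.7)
The plan is to apply Theorem~\ref{thm:produiuj} together with the quadrature hypothesis stated at the start of the observation. First, Theorem~\ref{thm:produiuj} (applied with the same $\eta$, $\varphi$, and $\gamma$) supplies a $\sigma \in L^2[0,\infty)$ for which $p(t) q(t) = \int_0^\infty e^{-x(t + \frac{1}{\gamma-1})} \sigma(x) \, dx$. Moreover, the proof of that theorem makes $\sigma$ explicit as the one-sided convolution $\sigma(u) = \int_0^u \eta(\omega)\, \varphi(u-\omega) \, d\omega$, which I expect to be crucial for bounding its $L^2$ norm later.

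Second, since $p \cdot q$ now lies in the range of $T^*_\gamma$ with explicit preimage $\sigma$, I would feed $\sigma$ into the quadrature hypothesis in place of the generic $\eta$. Substituting the integral representation of $p(t) q(t)$ into $\int_0^1 p(t) q(t) \, dt$ and into each sample $p(t_j) q(t_j)$, then Fubini-swapping $\int_0^1\!\int_0^\infty$, directly delivers the first inequality $|\int_0^1 p(t) q(t) \, dt - \sum_j w_j\, p(t_j) q(t_j)| \leq \epsilon \|\sigma\|_{L^2[0,\infty)}$.

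Third, I would bound $\|\sigma\|_{L^2[0,\infty)}$ by $\|\eta\|_{L^2[0,\infty)} \|\varphi\|_{L^2[0,\infty)}$. This last step is where I expect the main obstacle. A pointwise Cauchy--Schwarz on the convolution formula only yields the $L^\infty$ estimate $\|\sigma\|_\infty \le \|\eta\|_{L^2}\|\varphi\|_{L^2}$, since by standard Young's inequality one has $L^2 * L^2 \hookrightarrow L^\infty$ but not $L^2 * L^2 \hookrightarrow L^2$ on the half-line. To upgrade to an $L^2$ estimate with the same product of norms, I would either refine the pointwise bound using the sharper form $|\sigma(u)| \le (\int_0^u|\eta|^2)^{1/2} (\int_0^u|\varphi|^2)^{1/2}$ together with cutoffs exploiting tail decay of $\int_U^\infty |\eta|^2$ and $\int_U^\infty|\varphi|^2$, or re-express $\sigma$ as a convolution in which one factor lies in $L^1$ so that Young's $L^1 * L^2 \to L^2$ inequality applies. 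The first two steps are essentially routine consequences of Theorem~\ref{thm:produiuj} and the quadrature hypothesis; all of the substance of the observation is in securing this convolution-norm bound on the half-line.
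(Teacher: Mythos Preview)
Your first two steps---invoking Theorem~\ref{thm:produiuj} to write $p\cdot q = T^*_\gamma \sigma$ with $\sigma(u)=\int_0^u \eta(\omega)\varphi(u-\omega)\,d\omega$, then feeding $\sigma$ into the quadrature hypothesis---are exactly the paper's reasoning. The observation is stated without a separate proof; the argument is embedded in the statement itself, and the paper provides nothing beyond what you have written for these steps.

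You are also right that the third step is the real issue, and in fact your concern is sharper than you may realize: the bound $\norm{\sigma}_{L^2[0,\infty)} \le \norm{\eta}_{L^2[0,\infty)}\norm{\varphi}_{L^2[0,\infty)}$ is \emph{false} for general $\eta,\varphi\in L^2[0,\infty)$. A dilation shows this: if $\eta_\lambda(x)=\eta(\lambda x)$ then $\norm{\eta_\lambda}_2^2=\lambda^{-1}\norm{\eta}_2^2$ while $\norm{\eta_\lambda*\eta_\lambda}_2^2=\lambda^{-3}\norm{\eta*\eta}_2^2$, so the ratio $\norm{\eta_\lambda*\eta_\lambda}_2/\norm{\eta_\lambda}_2^2$ scales like $\lambda^{-1/2}$ and blows up as $\lambda\to 0$. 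Equivalently, Young's inequality gives $L^2*L^2\hookrightarrow L^\infty$ on the half-line, not $L^2$, and no refinement of Cauchy--Schwarz or rewriting as $L^1*L^2$ will rescue the general statement---your proposed fixes cannot work because there is nothing to fix into.

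The paper simply asserts the inequality without further comment. In its actual use (Section~\ref{sec:prac}), the observation is applied only with $\eta,\varphi$ taken to be specific singular functions $v_i$, which carry extra decay, and the conclusion there is written with a $\lesssim$ absorbing unspecified constants. So you should read the second inequality in the observation as a heuristic estimate rather than a theorem valid for arbitrary $L^2$ data; you have not missed any argument that the paper supplies.
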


Leveraging the multiplication rule established 
earlier, we demonstrate that the following quadrature rule 
accurately integrates the products of 
the kernel of $T_{\gamma}$ and the right singular functions of
$T_{\gamma}$.
%%%
\begin{corollary}
\label{col:expu}
Suppose that 
we have a quadrature rule to integrate $\alpha_i u_i \cdot \alpha_j u_j$ 
to within an error of $\epsilon$,
for all $i, j=0$, $1$, \ldots , $n-1$. Suppose further that
$t_1$, $t_2$, \ldots , $t_m$ are the quadrature nodes, and
$\tilde{w}_1$, $\tilde{w}_2$, \ldots , $\tilde{w}_m$ are the quadrature weights.
Then, the error of the quadrature rule applied to functions 
of the form $f(t)=e^{-x (t+\frac{1}{\gamma-1})}u_i(t)$, with $x \in
[0,\infty)$, is bounded by 
\begin{align}
\label{eq:expuerr}
\hspace{0mm}
\frac{\epsilon}{\alpha_i}V_n
+A^{\infty}_n\norm{u_i}_{L^{\infty}[0,1]} \norm{\tilde{w}}_1,
\end{align}
where $V_n$ and $A^{\infty}_n$ are defined in \Cref{eq:Vn} and \Cref{eq:Ainfty},
respectively.
\end{corollary}
%%%
\begin{proof}
Since $e^{-x (t+\frac{1}{\gamma-1})}$ can be written as 
\begin{align}
\label{eq:exp}
e^{-x (t+\frac{1}{\gamma-1})}=\sum_{j=0}^{\infty}v_j(x) \alpha_j u_j(t),
\end{align}
we have 
\begin{align}
\label{eq:expu1}
&\hspace{-12mm}  \biggl{|} \int_{0}^{1} e^{-x (t+\frac{1}{\gamma-1})}u_i(t) \, dt -
\sum_{k=1}^{m} \tilde{w}_k e^{-x (t_k+\frac{1}{\gamma-1})} u_i(t_k) \biggr{|} \notag \\
&\hspace{-17mm}=\biggl{|} \int_{0}^{1} \Bigl{(} \sum_{j=0}^{\infty}v_j(x) \alpha_j u_j(t)\Bigr{)}u_i(t) \, dt -
\sum_{k=1}^{m} \tilde{w}_k \Bigl{(}\sum_{j=0}^{\infty}v_j(x) \alpha_j u_j(t_k)\Bigr{)} u_i(t_k) \biggr{|} \notag \\
&\hspace{-17mm}= \biggl{|} \sum_{j=0}^{n-1} \int_{0}^{1} v_j(x) \alpha_j u_j(t) u_i(t) \, dt
+\sum_{j=n}^{\infty} \int_{0}^{1} v_j(x) \alpha_j u_j(t) u_i(t) \, dt \notag \\
&\hspace{-2mm} - \sum_{j=0}^{n-1}
\Bigl{(} \sum_{k=1}^{m} \tilde{w}_k v_j(x) \alpha_j u_j(t_k)\Bigr{)} u_i(t_k)
- \sum_{j=n}^{\infty}
\Bigl{(} \sum_{k=1}^{m} \tilde{w}_k v_j(x) \alpha_j u_j(t_k)\Bigr{)} u_i(t_k) \biggr{|} \notag \\
&\hspace{-17mm}= \biggl{|} \sum_{j=0}^{n-1} \int_{0}^{1} v_j(x) \alpha_j u_j(t) u_i(t) \, dt
- \sum_{j=0}^{n-1}
\Bigl{(} \sum_{k=1}^{m} \tilde{w}_k v_j(x) \alpha_j u_j(t_k)\Bigr{)} u_i(t_k) \notag \\
&\hspace{-2mm}  -\sum_{j=n}^{\infty}
\Bigl{(} \sum_{k=1}^{m} \tilde{w}_k v_j(x) \alpha_j u_j(t_k)\Bigr{)} u_i(t_k) \biggr{|} \notag \\
&\hspace{-17mm} \leq 
\sum_{j=0}^{n-1}{|} v_j(x)| \cdot \frac{\epsilon}{\alpha_i }
+\biggl{|}
\sum_{k=1}^{m} \tilde{w}_k u_i(t_k)\Bigl{(} \sum_{j=n}^{\infty}v_j(x) \alpha_j u_j(t_k)\Bigr{)}
 \biggr{|} \notag \\
&\hspace{-17mm} \leq \frac{\epsilon}{\alpha_i }V_n
+A^{\infty}_n\norm{u_i}_{L^{\infty}[0,1]} \norm{\tilde{w}}_1.
\end{align}
\end{proof}
Suppose that $x_1$, $x_2$, \ldots, $x_m$ and
$w_1$, $w_2$, \ldots, $w_m$ are the nodes and weights of a 
quadrature rule which integrates 
$\alpha_i v_i \cdot \alpha_j v_j$, to within an error of $\epsilon$, 
for all $i, j=0$, $1$, \ldots , $n-1$.
The following theorem shows that, if the left singular
functions $\{v_i\}_{i=0,1,\ldots,n-1}$ of
the operator $T_{\gamma}$, 
are used as interpolation basis,
then, the interpolation matrix
for the nodes $x_1$, $x_2$, \ldots, $x_m$ 
is well conditioned, 
provided that the 
maximum error $\epsilon$ of integrating $\alpha_iv_i\cdot \alpha_jv_j$, 
for $i, j=0$, $1$, \ldots , $n-1$,
is sufficiently small.

\begin{theorem}
\label{thm:interp}
Suppose that we have an $m$-point quadrature rule which integrates
$\alpha_iv_i \cdot \alpha_j v_j$, to within an error of $\epsilon$,
for all $i,j$ $=0$, $1$, \ldots , $n-1$. Suppose further that 
$x_1$, $x_2$, \ldots , $x_m$ are the quadrature nodes, and 
$w_1$, $w_2$, \ldots , $w_m$ are the quadrature weights.
Let the matrix $A \in \mathbb{R}^{m \times n}$ be given by the formula
\begin{align}
\label{eq:A}
A=
\begin{pmatrix}
v_0(x_1)& v_1(x_1)& \   \ldots& \  v_{n-1}(x_1)\\
v_0(x_2)& v_1(x_2)& \   \ldots& \  v_{n-1}(x_2)\\
\vdots & \vdots& \  \ddots &   \vdots\\
v_0(x_m)& v_1(x_m)& \   \ldots& \  v_{n-1}(x_m)
\end{pmatrix}
,
\end{align}
and let the matrix $W$ be the diagonal matrix with diagonal entries 
$w_1$, $w_2$, \ldots , $w_m$. We define the matrix $E = [e_{jk}]$ such that
\begin{align}
\label{eq:defe}
E=I-A^{T} W A.
\end{align}
Then,
\begin{align}
|e_{jk}| \leq  \frac{\epsilon}{\alpha_{j-1} \alpha_{k-1}}.
\end{align}
\end{theorem}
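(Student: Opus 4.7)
The plan is to unpack the definition of the $(j,k)$ entry of $E = I - A^T W A$ and recognize it as the quadrature error applied to the product $v_{j-1} \cdot v_{k-1}$, after which the hypothesis about the quadrature accuracy on $\alpha_i v_i \cdot \alpha_j v_j$ gives the bound immediately.

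First, I would write out $e_{jk}$ explicitly. By the definition of $A$ in \Cref{eq:A} and the fact that $W = \mathrm{diag}(w_1,\ldots,w_m)$, the $(j,k)$ entry of $A^T W A$ is $\sum_{l=1}^m w_l\, v_{j-1}(x_l)\, v_{k-1}(x_l)$. Thus
\begin{align}
e_{jk} = \delta_{jk} - \sum_{l=1}^m w_l\, v_{j-1}(x_l)\, v_{k-1}(x_l).
\end{align}

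Next, I would invoke the orthonormality of the left singular functions $\{v_i\}$ of $T_\gamma$ in $L^2[0,\infty)$, which was recalled in \Cref{sec:pre1}. This gives $\delta_{jk} = \int_0^\infty v_{j-1}(x)\, v_{k-1}(x)\, dx$, so that
\begin{align}
e_{jk} = \int_0^\infty v_{j-1}(x)\, v_{k-1}(x)\, dx - \sum_{l=1}^m w_l\, v_{j-1}(x_l)\, v_{k-1}(x_l).
\end{align}
This identifies $e_{jk}$ as exactly the error incurred by the $m$-point quadrature rule $(x_l, w_l)$ applied to the product $v_{j-1}\cdot v_{k-1}$.

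Finally, to match the hypothesis, I would factor out the singular values: multiplying inside the integral and sum by $\alpha_{j-1}\alpha_{k-1}$ and compensating outside yields
\begin{align}
e_{jk} = \frac{1}{\alpha_{j-1}\alpha_{k-1}} \Bigl( \int_0^\infty \alpha_{j-1} v_{j-1}(x)\cdot \alpha_{k-1} v_{k-1}(x)\, dx - \sum_{l=1}^m w_l\, \alpha_{j-1} v_{j-1}(x_l)\cdot \alpha_{k-1} v_{k-1}(x_l) \Bigr).
\end{align}
By the assumed quadrature accuracy, the bracketed expression is bounded in absolute value by $\epsilon$, which gives $|e_{jk}| \le \epsilon/(\alpha_{j-1}\alpha_{k-1})$ as claimed.

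There is really no analytical obstacle here; the only thing to be careful about is the index bookkeeping (the rows/columns of $A$ are indexed from $1$ to $n$ while the singular functions and values are indexed from $0$ to $n-1$, so it is the $\alpha_{j-1}$ and $\alpha_{k-1}$ that appear), together with confirming that the $v_i$ have been normalized so as to be orthonormal in $L^2[0,\infty)$, which is exactly the convention recalled in \Cref{sec:pre1}.
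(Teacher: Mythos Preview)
Your proposal is correct and follows essentially the same approach as the paper: expand the $(j,k)$ entry of $A^TWA$, use orthonormality of the $v_i$ in $L^2[0,\infty)$ to replace $\delta_{jk}$ by the integral, factor out $\alpha_{j-1}\alpha_{k-1}$, and apply the quadrature-error hypothesis. The only cosmetic difference is the order in which the orthonormality substitution and the $\alpha$-factoring are performed.
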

\begin{proof}
From \Cref{eq:defe}, we have  
\begin{align}
e_{jk}=\delta_{jk} - \sum_{l=1}^{m} w_l v_{j-1}(x_l) v_{k-1}(x_l),
\end{align}
where $\delta_{jk}=1$ if $j=k$, and $\delta_{jk}=0$ otherwise.
Then,
\begin{align}
&\hspace{-5mm} |e_{jk}|=\Bigl{|}\delta_{jk} - \sum_{l=1}^{m} w_l \alpha_{j-1}v_{j-1}(x_l) \cdot
\alpha_{k-1} v_{k-1}(x_l)\frac{1}{\alpha_{j-1}\alpha_{k-1}}\Bigr{|} \notag \\
&\hspace{2mm} \leq \Bigl{|}\delta_{jk} - \frac{1}{\alpha_{j-1}\alpha_{k-1}}\int_{0}^{\infty} \alpha_{j-1}v_{j-1}(x)\cdot
\alpha_{k-1} v_{k-1}(x) \, dx \Bigr{|}
+\frac{\epsilon}{\alpha_{j-1}\alpha_{k-1}}
\notag \\
&\hspace{2mm} =
\frac{\epsilon}{\alpha_{j-1}\alpha_{k-1}}.
\end{align}
\end{proof}

The following corollary establishes an upper bound on the norm 
of the pseudo-inverse $A^{\dagger}$ of the matrix $A$ defined in \Cref{eq:A}. 

\begin{corollary}
\label{col:Anorm}
Suppose that we have a collection of quadrature nodes $x_1$, $x_2$, \ldots , $x_m$
and positive 
quadrature weights
$w_1$, $w_2$, \ldots , $w_m$,
which 
integrates $\alpha_iv_i \cdot \alpha_j v_j$ to within an error of $\epsilon
\leq \frac{\alpha_{n}^2}{2{n}}$,
for all $i,j$ $=0$, $1$, \ldots , $n-1$.
Let $A\in \mathbb{R}^{m \times n}$ be the matrix defined in \Cref{eq:A}. Then, 
\begin{align}
\label{eq:Anorm}
\norm{A^{\dagger}}_2
&< \sqrt{2}
\max_{1\leq i \leq m} \sqrt{w_i}, 
\end{align}
where $A^{\dagger}\in \mathbb{R}^{n \times m}$ is the pseudo-inverse of $A$.
\end{corollary}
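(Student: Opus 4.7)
The plan is to exploit the identity $A^T W A = I - E$ from the preceding theorem, bound $\|E\|_2$ using the entrywise bound $|e_{jk}| \le \epsilon/(\alpha_{j-1}\alpha_{k-1})$, and then extract $\sigma_{\min}(A)$ from $\sigma_{\min}(A^T W A)$ by factoring through the weight matrix. Once I have $\sigma_{\min}(A) \gtrsim 1/\max_i\sqrt{w_i}$, the desired bound on $\|A^\dagger\|_2 = 1/\sigma_{\min}(A)$ follows immediately.

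First, I would estimate $\|E\|_2$ by $\|E\|_F$, writing
\begin{align}
\|E\|_F^2 \;\le\; \sum_{j,k=1}^n \frac{\epsilon^2}{\alpha_{j-1}^2 \alpha_{k-1}^2} \;=\; \epsilon^2 \left(\sum_{j=0}^{n-1} \frac{1}{\alpha_j^2}\right)^{\!2}.
\end{align}
Because $\alpha_0 > \alpha_1 > \cdots$ is geometrically decaying (as recorded in \Cref{sec:pre1} and \Cref{sec:NA}), the sum $\sum_{j=0}^{n-1}1/\alpha_j^2$ is dominated by its final term, and one can crudely absorb it into a factor proportional to $\sqrt{n}/\alpha_{n-1}$. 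Combined with the hypothesis $\epsilon \le \alpha_n^2/(2\sqrt{n})$ and the near-equality $\alpha_n \approx \alpha_{n-1}$ implied by the exponential decay, this should yield $\|E\|_F \le 1/2$, so $\|E\|_2 \le 1/2$ as well. Consequently, by a Weyl-type argument, $\sigma_{\min}(A^T W A) \ge 1 - \|E\|_2 \ge 1/2$.

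Next, I would factor $A^T W A = (W^{1/2} A)^T (W^{1/2} A)$, noting that this is valid because the weights $w_i$ are assumed positive. Then $\sigma_{\min}(W^{1/2} A) \ge 1/\sqrt{2}$. For every $y \in \mathbb{R}^n$, the inequality
\begin{align}
\|W^{1/2} A y\|_2 \;\le\; \|W^{1/2}\|_2 \, \|A y\|_2 \;=\; \max_{1\le i\le m}\sqrt{w_i} \cdot \|A y\|_2
\end{align}
yields $\sigma_{\min}(A) \ge \sigma_{\min}(W^{1/2}A)/\max_i \sqrt{w_i} \ge 1/(\sqrt{2}\max_i \sqrt{w_i})$. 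Since $A$ has full column rank (its smallest singular value is positive), $\|A^\dagger\|_2 = 1/\sigma_{\min}(A) \le \sqrt{2}\max_i\sqrt{w_i}$, which is the claimed bound.

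The main obstacle is the first step: pinning down the constant in the bound $\|E\|_2 \le 1/2$ so that it matches the hypothesis $\epsilon \le \alpha_n^2/(2\sqrt{n})$. The entrywise bound from \Cref{thm:interp} is worst in the lower-right block where $\alpha_{j-1}, \alpha_{k-1}$ are smallest, and the precise factor of $\sqrt{n}$ in the hypothesis seems to be calibrated so that a Frobenius-norm sum, together with the geometric decay of $\alpha_j$, telescopes to exactly $1/2$. Getting the dependence on $n$ and on $\alpha_n/\alpha_{n-1}$ right will be the delicate part; the rest of the argument is straightforward linear algebra.
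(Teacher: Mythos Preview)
Your approach matches the paper's: bound $\|E\|_2$ from the entrywise estimates of the preceding theorem, factor $B = W^{1/2}A$ so that $B^TB = I - E$, deduce $\sigma_{\min}(B) > 1/\sqrt{2}$, and pass to $\|A^\dagger\|_2$ via $\|W^{1/2}\|_2 = \max_i\sqrt{w_i}$. The only difference is that the paper sidesteps your Frobenius-plus-geometric-decay argument entirely: it replaces each entry bound by the crudest one, $|e_{jk}| < \epsilon/\alpha_n^2$, and asserts $\|E\|_2 < \sqrt{n}\,\epsilon/\alpha_n^2$ directly, so that the hypothesis $\epsilon \le \alpha_n^2/(2\sqrt{n})$ gives $\|E\|_2 < 1/2$ without any appeal to decay rates or to $\alpha_n/\alpha_{n-1}$.
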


\begin{proof}
Recalling that $w_1$, $w_2$, \ldots , $w_m$ are positive, we let 
$W^{\frac{1}{2}}$ denote a diagonal matrix with entries $\sqrt{w_1}$,
$\sqrt{w_2}$, \ldots , $\sqrt{w_m}$. 
We define $B$ such that $B=W^{\frac{1}{2}}A$. It follows from \Cref{eq:defe} that
$B^{T}B=I-E$. Since $e_{jk} < \frac{\epsilon}{\alpha_{n}^2}$, for all $j,k=1$,
$2$, \ldots , $n$, we have $\norm{E}_2 < 
\frac{\epsilon}{\alpha_{n}^2}{n}$. 
Let $\tilde{\sigma}_1$, $\tilde{\sigma}_2$, \ldots , $\tilde{\sigma}_n$ denote the 
singular values of $B^{T} B$.
Then, it can be shown that (see Theorem IIIa in~\cite{nm}) 
\begin{align}
|\tilde{\sigma}_j - 1 | \leq \norm{E}_2 < 
\frac{\epsilon}{\alpha_{n}^2}{n},   
\end{align}
for all $j=1$, $2$, \ldots , $n$.
This means that
\begin{align}
1-\frac{\epsilon}{\alpha_{n}^2}{n} < \tilde{\sigma}_j <   
1+\frac{\epsilon}{\alpha_{n}^2}{n}.
\end{align}
Letting $k=\min\{n,m\}$ and $\sigma_1$, $\sigma_2$, \ldots , $\sigma_k$ be the 
singular values of $B$, we have 
\begin{align}
\sqrt{1-\frac{\epsilon}{\alpha_{n}^2}{n}} < {\sigma}_j <   
\sqrt{1+\frac{\epsilon}{\alpha_{n}^2}{n}}.
\end{align}
Letting $B^{\dagger}$ be the pseudo-inverse of $B$, such that $B^{\dagger}B=I$,
since $B=W^{\frac{1}{2}}A$, we have that $A^{\dagger}=B^{\dagger}W^{\frac{1}{2}}$. Thus,
\begin{align}
\label{eq:normA1}
\norm{A^{\dagger}}_2 &\leq \norm{B^{\dagger}}_2\norm{W^{\frac{1}{2}}}_2 \notag \\
&< \frac{1}{\sqrt{1-\frac{\epsilon}{\alpha_{n}^2}{n}}}
\cdot \max_{1\leq i \leq m} \sqrt{w_i}. 
\end{align}
If we have 
$\epsilon \leq \frac{\alpha_{n}^2}{2{n}}$, then 
\Cref{eq:normA1} implies that
\begin{align}
\norm{A^{\dagger}}_2
&<
\sqrt{2} \max_{1\leq i \leq m} \sqrt{w_i}. 
\end{align}
\end{proof}
\section{Selecting the Quadrature Nodes and Weights}
\label{sec:prac}
In this section, we discuss how to construct the quadrature rules 
described in the conditions of the theorems presented in \Cref{sec:AA}.

Suppose that the nodes $t_1$, $t_2$, \ldots , $t_m$
are the roots of $u_m(t)$,
and that the weights $\tilde{w}_1$, $\tilde{w}_2$, \ldots , $\tilde{w}_m$ satisfy
\begin{align}
\label{eq:weight2def}
\int_{0}^{1} u_i(t) \, dt= 
\sum_{k=1}^{m} \tilde{w}_k u_i(t_k),
\end{align}
for all $i=0$, $1$, \ldots , $m-1$.
%
%%%Then,
%%%\begin{align}
%%%\Bigl{|}\int_0^{1} \alpha_i
%%%u_i(t) dt-
%%%\sum_{k=1}^{m} \tilde{w}_k \alpha_i u_i(t_k)
%%%\Bigr{|} \leq \mathds{1}_{\{i \geq m\}} \alpha_i \norm{u_i}_{L^{\infty}[0,1]}
%%%(1+\norm{\tilde{w}}_1) \lesssim\alpha_m.
%%%\end{align}
Then, \Cref{eq:svdTstar} and \Cref{eq:weight2def} imply that 
\begin{align}
&\hspace{-15mm}\Bigl{|}\int_0^{1} \int_{0}^{\infty} e^{-x
(t+\frac{1}{\gamma-1})} \eta(x) \, dx\, dt-
\sum_{k=1}^{m}\tilde{w}_k \int_{0}^{\infty} e^{-x
(t_k+\frac{1}{\gamma-1})} \eta(x) \, dx
\Bigr{|} \notag \\
%%%
&\hspace{-20mm} = \Bigl{|}\int_0^{1}\sum_{i=0}^{\infty}\alpha_i
\Bigl{(}\int_{0}^{\infty}v_i(x)\eta(x) \, dx \Bigr{)}u_i(t) \,
dt-
\sum_{k=1}^{m}\tilde{w}_k \sum_{i=0}^{\infty}\alpha_i 
\Bigl{(}\int_{0}^{\infty}v_i(x)\eta(x) \, dx \Bigr{)}u_i(t_k)
\Bigr{|} \notag \\ 
%%%
&\hspace{-20mm} = \Bigl{|}\int_0^{1}\sum_{i=m}^{\infty}\alpha_i 
\Bigl{(}\int_{0}^{\infty}v_i(x)\eta(x) \, dx \Bigr{)}u_i(t) \,
dt-
\sum_{k=1}^{m}\tilde{w}_k \sum_{i=m}^{\infty}\alpha_i 
\Bigl{(}\int_{0}^{\infty}v_i(x)\eta(x) \, dx \Bigr{)}u_i(t_k)
\Bigr{|} \notag \\ 
%%%
&\hspace{-20mm} \leq \Bigl{|}\int_0^{1}\sum_{i=m}^{\infty}\alpha_i 
\Bigl{(}\int_{0}^{\infty}v_i(x)\eta(x) \, dx \Bigr{)}u_i(t) \,
dt \Bigr{|} +
\Bigl{|}\sum_{k=1}^{m}\tilde{w}_k \sum_{i=m}^{\infty}\alpha_i 
\Bigl{(}\int_{0}^{\infty}v_i(x)\eta(x) \, dx \Bigr{)}u_i(t_k)
\Bigr{|} \notag \\ 
&\hspace{-20mm} \leq \norm{\eta}_{L^{\infty}[0,\infty)}{(}A^{1}_m+
A^{1,\infty}_m \norm{\tilde{w}}_1{)},
%%%
\end{align}
where $A^1_m$ and $A^{1,\infty}_m$ are defined in \Cref{eq:A1}
and \Cref{eq:A1infty}, respectively.
It follows from \Cref{obs:quaduiuj} that 
\begin{align}
\label{eq:E1}
\hspace{-25mm} E_1:=\max_{0\leq i,j \leq n-1}{\Bigl{|}\int_{0}^{1} \alpha_i u_i(t)\cdot \alpha_j u_j(t) \, dt- 
\sum_{k=1}^{m} \tilde{w}_k \alpha_i u_i(t_k)\cdot \alpha_j
u_j(t_k)\Bigr{|}}
\leq A^1_m+
A^{1,\infty}_m \norm{\tilde{w}}_1.
\end{align}
Since $A^1_m \approx \alpha_m$,
 $A^{1,\infty}_m \approx \alpha_m$,  and $\norm{\tilde{w}}_1$
is small, we have $E_1 \lesssim \alpha_m$.
If $E_1 \leq \alpha^2_n$,
then
\Cref{col:expu}
guarantees that
such a quadrature rule integrates the functions 
$f(t)=e^{-x(t+\frac{1}{\gamma-1})}
u_i(t)$, 
for $i=0$, $1$, \ldots , $n-1$, to an error of approximately
the same size as $\alpha_n$.
Since the singular values $\alpha_i$ decay exponentially,
we see that $E_1 \lesssim \alpha_m \leq \alpha^2_n$ when $m \approx 2n$.
In practice, however, it is unnecessary to take $m$ to be so large.
Numerical experiments for $\gamma=10$, $50$ and
$250$ demonstrate that,
by choosing $m=n$, 
the error of the quadrature rule applied to 
$\alpha_i u_i\cdot \alpha_j u_j$,
for all $i,j=0$, $1$, \ldots , $n-1$,
turns out to be smaller
than $\alpha^2_n$, as shown in
\Cref{fig:uiuj1,fig:uiuj2,fig:uiuj3}.
Thus, it follows from \Cref{col:expu} that  
the error of the quadrature rule applied to 
$f(t)=e^{-x(t+\frac{1}{\gamma-1})}
u_i(t)$, 
for $i=0$, $1$, \ldots , $n-1$, is approximately $\alpha_n$.   
%%%Thus, 
%%%it can be inferred from \Cref{col:expu} that the 
%%%error of such quadrature rule applied to the
%%%function $f(t)=e^{-x(t+\frac{1}{\gamma-1})}
%%%u_i(t)$,
%%%for $i=0$, $1$, \ldots , $n-1$,
%%%is approximately $\alpha_n$.
%
%
%
\begin{figure}[!ht]
\centering
\subfloat[\label{fig:uiuj1} A comparison between $E_1$ and $\alpha^2_n$, as a function
  of $n$, where $m=n$.]{%
\includegraphics[scale=0.47]{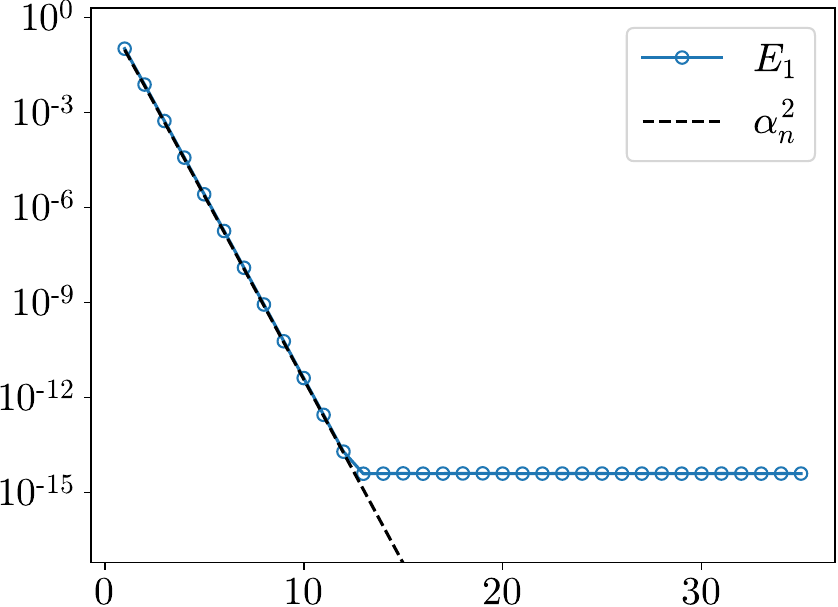}
}    
\hfill
\subfloat[\label{fig:Anorm1} A comparison between $\norm{A^{\dagger}}_2$ and
  $\sqrt{2}\max_{k} \sqrt{w_k}$, as a function
  of $n$, where $m=n$.]{%
\includegraphics[scale=0.47]{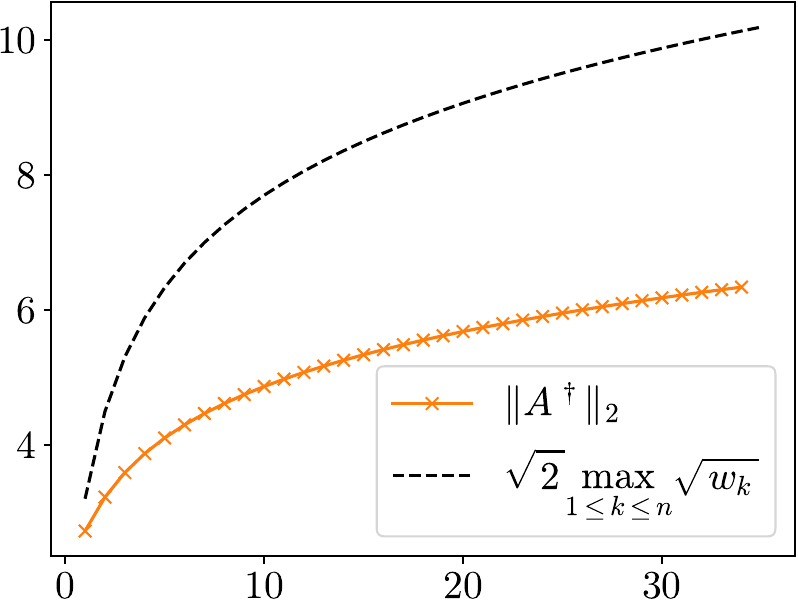}
}
\caption{$\gamma=10$}
  \label{fig:prac1}
\end{figure}
\begin{figure}[!ht]
\centering
\subfloat[\label{fig:uiuj2} A comparison between $E_1$ and $\alpha^2_n$, as a function
  of $n$, where $m=n$.]{%
\includegraphics[scale=0.47]{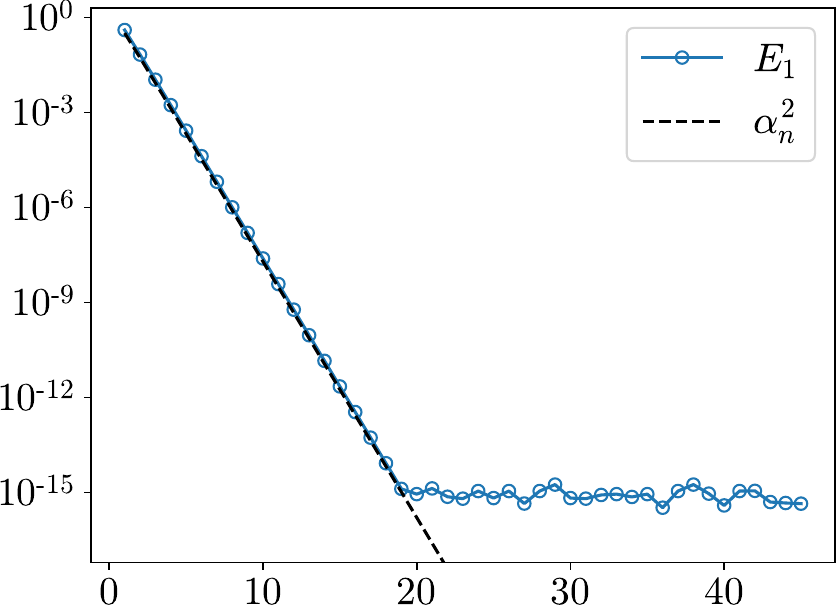}
}    
\hfill
\subfloat[\label{fig:Anorm2} A comparison between $\norm{A^{\dagger}}_2$ and
  $\sqrt{2}\max_{k} \sqrt{w_k}$, as a function
  of $n$, where $m=n$.]{%
\includegraphics[scale=0.47]{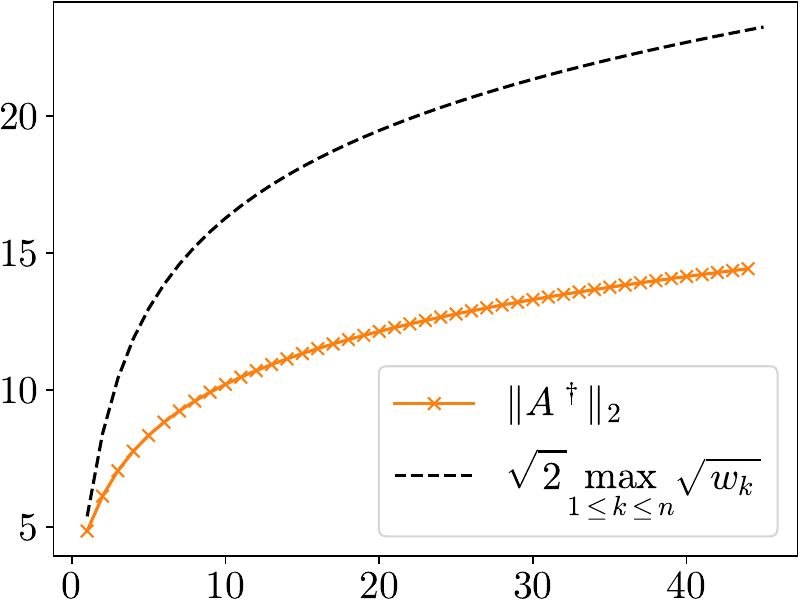}
}
\caption{$\gamma=50$}
  \label{fig:prac2}
\end{figure}
\begin{figure}[!ht]
\centering
\subfloat[\label{fig:uiuj3} A comparison between $E_1$ and $\alpha^2_n$, as a function
  of $n$, where $m=n$.]{%
\includegraphics[scale=0.47]{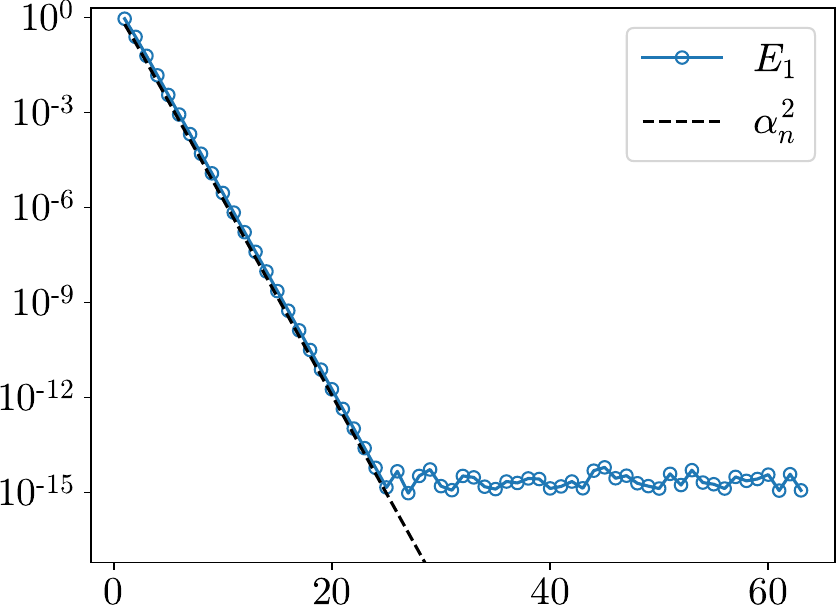}
}    
\hfill
\subfloat[\label{fig:Anorm3} A comparison between $\norm{A^{\dagger}}_2$ and
  $\sqrt{2}\max_{k} \sqrt{w_k}$, as a function
  of $n$, where $m=n$.]{%
\includegraphics[scale=0.47]{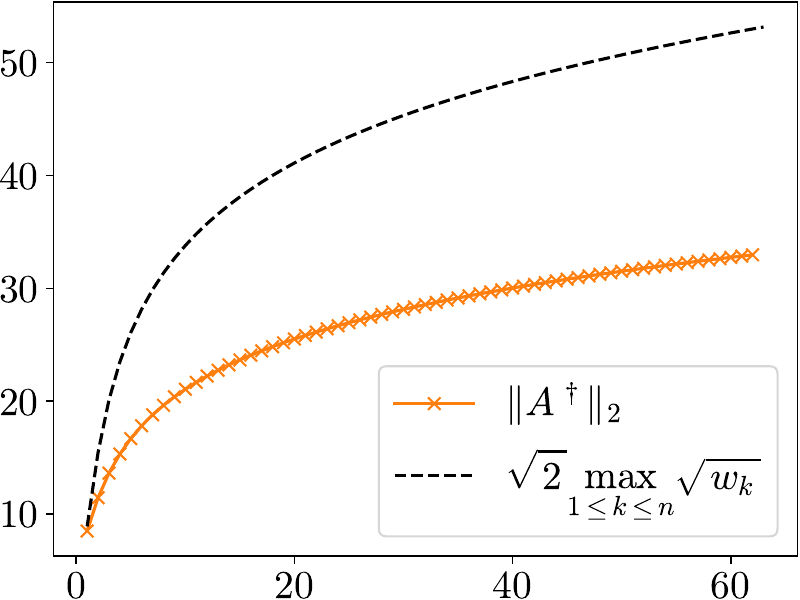}
}
\caption{$\gamma=250$}
  \label{fig:prac3}
\end{figure}

Suppose now that the nodes $x_1$, $x_2$, 
\ldots , $x_m$ are the
roots of $v_m(x)$, and the weights $w_1$, $w_2$, \ldots , $w_m$
satisfy 
\begin{align}
\label{eq:weight1def}
\int_{0}^{\infty} v_i(x) \, dx= 
\sum_{k=1}^{m} {w}_k v_i(x_k),
\end{align}
for all $i=0$, $1$, \ldots , $m-1$.
%%%Since 
%%%\begin{align}
%%%\Bigl{|}\int_0^{\infty} \alpha_i
%%%v_i(t) dx-
%%%\sum_{k=1}^{m} {w}_k \alpha_i v_i(x_k)
%%%\Bigr{|} \leq \mathds{1}_{\{i \geq m\}} \alpha_i
%%%(\norm{v_i}_{L^1[0,\infty)}+\norm{v_i}_{L^{\infty}
%%%[0,\infty)}\norm{w}_1)\lesssim  \alpha_m ,
%%%\end{align}
Then, \Cref{eq:svdT} and \Cref{eq:weight1def} imply that 
\begin{align}
%%%
&\hspace{-15mm} \Bigl{|}\int_0^{\infty} \int_{0}^{1} e^{-x
(+\frac{1}{\gamma-1})} \eta(t) \, dt \, dx-
\sum_{k=1}^{m}w_k \int_{0}^{1} e^{-x_k
(t+\frac{1}{\gamma-1})} \eta(t) \, dt
\Bigr{|} \notag \\
%%%
&\hspace{-20mm} = \Bigl{|}\int_0^{\infty} \sum_{i=0}^{\infty} 
\alpha_i\Bigl{(} \int_{0}^1 u_i(t)\eta(t) \, dt \Bigr{)}v_i(x)\, dx-
\sum_{k=1}^{m}w_k \sum_{i=0}^{\infty} 
\alpha_i\Bigl{(} \int_{0}^1 u_i(t)\eta(t) \, dt\Bigr{)} v_i(x_k)
\Bigr{|} \notag \\
%%%
&\hspace{-20mm} = \Bigl{|}\int_0^{\infty} \sum_{i=m}^{\infty} 
\alpha_i\Bigl{(} \int_{0}^1 u_i(t)\eta(t) \, dt \Bigr{)}v_i(x)\, dx-
\sum_{k=1}^{m}w_k \sum_{i=m}^{\infty} 
\alpha_i\Bigl{(} \int_{0}^1 u_i(t)\eta(t) \, dt\Bigr{)} v_i(x_k)
\Bigr{|} \notag \\
%%%
&\hspace{-20mm} \leq \Bigl{|}\int_0^{\infty} \sum_{i=m}^{\infty} 
\alpha_i\Bigl{(} \int_{0}^1 u_i(t)\eta(t) \, dt \Bigr{)}v_i(x)\, dx \Bigr{|}+
\Bigl{|} \sum_{k=1}^{m}w_k \sum_{i=m}^{\infty} 
\alpha_i\Bigl{(} \int_{0}^1 u_i(t)\eta(t) \, dt\Bigr{)} v_i(x_k)
\Bigr{|} \notag \\
&\hspace{-20mm} \leq
\norm{\eta}_{L^{\infty}[0,1]}{(}A^1_m +
 A^{\infty,1}_m \norm{w}_1{)}, 
\end{align}
where $A^1_m$ and $A^{\infty,1}_m$ are defined in \Cref{eq:A1}
and \Cref{eq:Ainfty1}, respectively.
It follows from \Cref{obs:quadvivj} that 
\begin{align}
\label{eq:E2}
\hspace{-27mm} E_2:=\max_{0\leq i,j \leq n-1}\Bigl{|}\int_{0}^{\infty} \alpha_i v_i(x)\cdot \alpha_j v_j(x) \, dx- 
\sum_{k=1}^{m} \frac{{w}_k}{2} \alpha_i v_i(\frac{x_k}{2})\cdot \alpha_j
v_j(\frac{x_k}{2})\Bigr{|}
\leq
A^1_m +
 A^{\infty,1}_m \norm{w}_1.
\end{align}
Since $A^1_m \approx \alpha_m$,
 $A^{\infty,1}_m \approx \alpha_m$,
 and $A^{\infty,1}_m\norm{{w}}_1 \approx \alpha_m \norm{v_m}_{L^{\infty}{[0,\infty)}}\norm{w}_1$,
we have $E_2 \lesssim \alpha_m(1+\norm{v_m}_{L^{\infty}{[0,\infty)}}\norm{w}_1)$,
with the size of $\norm{v_m}_{L^{\infty}{[0,\infty)}}\norm{w}_1$ illustrated 
in \Cref{fig:vw2}.
If $E_2 \leq \frac{\alpha^2_n}{2{n}}$,
then \Cref{col:Anorm} guarantees that the norm 
of $A^\dagger \in \mathbb{R}^{n \times m}$
achieves the bound specified in \Cref{eq:Anorm}.
We see that $E_2 \lesssim \alpha_m(1+\norm{v_m}_{L^{\infty}{[0,\infty)}}\norm{w}_1) \leq \frac{\alpha^2_n}{2{n}}$ 
when $m \approx 2n$.
However, instead of choosing $m$ to be so large,
we can once again take $m=n$,
and use the nodes $x_k$ and the weights $w_k$ rather than
$x_k/2$ and 
$w_k/2$.
Unlike the error of the quadrature rule in \Cref{eq:weight2def} applied to
$\alpha_iu_i\cdot \alpha_j u_j$,
for $i,j=0$, $1$, \ldots , $n-1$,
which is, in practice,
less than $\alpha^2_n$, the error of the quadrature rule in \Cref{eq:weight1def} applied to 
$\alpha_iv_i\cdot \alpha_j v_j$, 
for $i,j=0$, $1$, \ldots , $n-1$,
lies somewhere between $\alpha^2_n$ and $\alpha_n$. 
However, we observe that the special structure of $A \in \mathbb{R}^{n \times n}$
allows the norm of $A^\dagger$ to
still
attain the bound specified in 
\Cref{eq:Anorm}. The results for $\gamma=10$, $50$ and $250$ are
shown in \Cref{fig:Anorm1,fig:Anorm2,fig:Anorm3}, respectively. 

\begin{remark}
It is worth emphasizing that the choice of quadrature nodes is not unique. Any set of
quadrature nodes with corresponding weights that satisfy 
\Cref{eq:weight2def} or \Cref{eq:weight1def} can be employed for our purposes.
In this paper, we choose the roots of $u_m$ and $v_m$ to be the quadrature nodes,
since the associated weights are positive and reasonably small, which we have shown in 
\Cref{sec:NA}.
\end{remark}

\section{Approximation by Singular Powers}
\label{sec:interp}
In this section, we present a method for approximating a function 
of the form 
\begin{align}
\label{eq:repf}
f(x)=\int_{a}^{b} x^{\mu} \sigma(\mu) \, d \mu, \qquad x\in [0,1],
\end{align}
for some signed Radon measure $\sigma(\mu)$,
using a basis of  
$\{x^{t_j}\}_{j=1}^{N}$ for some specially chosen points 
$t_1$, $t_2$, \ldots , $t_N \in [a,b]$.
Our approach involves approximating $f$ by the left
singular functions of $T_{\gamma}$,
and then discretizing the integral representation of   
these left singular 
functions in the form of 
$\{x^{t_j}\}_{j=1}^{N}$.

In the following theorem,
we establish the existence of such an approximation,
and quantify its approximation error.

\begin{theorem}
\label{thm:err}
Let $f$ be a function of the form \Cref{eq:repf}. 
Suppose that
$\tilde{t}_1$, $\tilde{t}_2$, \ldots , $\tilde{t}_N$
and $\tilde{w}_1$, $\tilde{w}_2$, \ldots , $\tilde{w}_N$
are the quadrature nodes and weights of a quadrature rule 
such that $E_1 \leq \alpha^2_n$, where 
$E_1$ is defined in \Cref{eq:E1}. Let $t_j = a+(b-a)\tilde{t}_j$,
for all $j=1$, $2$, \ldots , $N$.
Then, there exists a vector $c \in \mathbb{R}^{N}$ such that the function 
\begin{align}
{f}_N(x)=\sum_{j=1}^{N} c_j x^{t_j},
\end{align}
satisfies
\begin{align}
\label{eq:ferror}
&\hspace{-3mm} \norm{f-f_N}_{L^{\infty}[0,1]} \leq 
|\sigma|\alpha_n
\Bigl{(} \frac{A^{\infty}_n}{\alpha_n}+U_n V_n+
\frac{A^{\infty}_n}{\alpha_n}{U_n}^2\norm{\tilde{w}}_1\Bigr{)},
\end{align}
where 
$A^{\infty}_n$, $U_n$ and $V_n$ are defined in
\Cref{eq:Ainfty}, \Cref{eq:Un} and \Cref{eq:Vn}, respectively,
and the norm of the coefficient vector $c$ is bounded by 
\begin{align}
\label{eq:normchat}
\hspace{-5mm} \norm{{{c}}}_2 \leq \norm{\tilde{w}}_1 |\sigma| \cdot {U_n}^2.
\end{align}
\end{theorem}
\begin{proof}
Substituting $\omega=-\log{x}$ into \Cref{eq:repf}, we have 
\begin{align}
f(e^{-\omega})&=\int_{a}^{b} e^{-\omega\mu} \sigma(\mu) \, d\mu,\notag \\
&=\int_{0}^{1} e^{-\tilde{\omega}(\bar{\mu}+\frac{1}{\gamma-1})}
{(b-a)} \sigma((b-a)\bar{\mu}+a) \, d\bar{\mu},
  \qquad \tilde{\omega} \in [0,\infty),
\end{align}
where $\bar{\mu}=\frac{\mu-a}{b-a}$, and
$\tilde{\omega}=(b-a)\omega$.
Since $\{\alpha_i\}_{i=0,1,\ldots,\infty}$ decays exponentially, we 
truncate the SVD of the operator $T_{\gamma}$ after $n$ terms and obtain 
\begin{align}
e^{-\omega(t+\frac{1}{\gamma-1})}&=\sum_{i=0}^{\infty} v_i(\omega)\alpha_i u_i(t)
\approx \sum_{i=0}^{n-1} v_i(\omega)\alpha_i u_i(t).
\end{align}
Then,
we construct
the approximation $\tilde{f}$ to $f$, defined by
\begin{align}
\label{eq:exact}
\tilde{f}(e^{-{{\omega}}})=\sum_{i=0}^{n-1}
\alpha_i \biggl{(} \int_{0}^{1}
u_i(\bar{\mu}){(b-a)} \sigma((b-a)\bar{\mu}+a) \, d\bar{\mu}
\biggr{)} v_i(\tilde{\omega}).
\end{align}
Thus,
\begin{align}
\label{eq:approx1}
\hspace{5mm} \tilde{f}(x)&=\sum_{i=0}^{n-1}\alpha_i \biggl{(} \int_{0}^{1}
u_i(\bar{\mu}){(b-a)} \sigma((b-a)\bar{\mu}+a) \, d\bar{\mu}
\biggr{)} v_i(-(b-a)\log x)\notag \\
&=\sum_{i=0}^{n-1}\tilde{c}_i\alpha_i v_i(-(b-a)\log x), 
\end{align}
for $x \in [0,1]$,
where
\begin{align}
\label{eq:deftc}
\tilde{c}_i=\int_{0}^{1} u_i(\bar{\mu})
(b-a) \sigma((b-a)\bar{\mu}+a)\, d\bar{\mu}. 
\end{align}
%%%for $i=0$, $1$, \ldots , $n-1$. 
We observe that
\begin{align}
\label{eq:normct2}
|\tilde{c_i}| \leq |{\sigma}|\cdot
\norm{u_i}_{L^{\infty}[0,1]}.
\end{align}
%%%and that
%%%\begin{align}
%%%\label{eq:normct}
%%%\norm{\tilde{c}}_2 \leq \sqrt{n}|{\sigma}|\cdot
%%%\max_{0\leq i \leq n-1} \norm{u_i}_{L^{\infty}[0,1]}
%%%= \sqrt{n}|{\sigma}|\cdot U_n,
%%%\end{align}
%%%where $U_n$ is defined in \Cref{eq:Un}.
Thus,
\begin{align}
\label{eq:diff1}
\norm{f-\tilde{f}}_{L^{\infty}[0,1]}= 
\bbnorm{\sum_{i=n}^{\infty} \tilde{c}_i{\alpha}_i
v_i(-(b-a)\log{x})}_{L^{\infty}[0,1]}
\leq |\sigma|A^{\infty}_n,
\end{align}
where $A^{\infty}_n$ is defined in \Cref{eq:Ainfty} and $A^{\infty}_n \approx \alpha_n$.
According to \Cref{eq:Tu}, we have
\begin{align}
\alpha_iv_i(\omega)=\int_{0}^{1}e^{-\omega (t+\frac{1}{\gamma-1})} u_i(t) \, dt.
\end{align}
Since
\begin{align}
 E_1=\max_{0\leq i,j \leq n-1} \Bigl{|} \int_{0}^{1}
 \alpha_iu_i(t) \alpha_j u_j(t)\,dt- 
\sum_{l=1}^{N} \tilde{w}_l\alpha_i 
u_i(\tilde{t}_l)\alpha_j u_j(\tilde{t}_l)\Bigr{|} 
\leq {\alpha_n}^2,
\end{align}
for all $i,j$ $=0$, $1$, \ldots , $n-1$,
it follows from \Cref{col:expu} that  
\begin{align}
&\hspace{0mm} \tilde{E}_i:= \Bigl{|}\int_{0}^{1}e^{-\omega (t+\frac{1}{\gamma-1})} u_i(t) \, dt-
\sum_{l=1}^{N} \tilde{w}_l e^{-\omega (\tilde{t}_l+\frac{1}
{\gamma-1})} u_i(\tilde{t}_l)\Bigr{|} \notag \\
& \hspace{5mm} \leq \alpha_{n}V_n
+A^{\infty}_n\norm{u_i}_{L^{\infty}[0,1]} \norm{\tilde {w}}_1,
\end{align}
where $A^{\infty}_n$ and $V_n$ are defined in \Cref{eq:Ainfty}
and \Cref{eq:Vn}, respectively.
%%%Recalling that 
%%%$\norm{\tilde{w}}_1$,
%%%$V_n$ and $\norm{u_i}_{L^{\infty}[0,1]}$ are small,
%%%and $A^{\infty}_n \approx \alpha_n$, 
%%%the discretization error, $\tilde{E}_i$, is approximately of the size $\alpha_n$.

Recalling \Cref{eq:approx1}, we have
\begin{align}
\tilde{f}(x)
=\sum_{i=0}^{n-1}\tilde{c}_i\alpha_i v_i(-(b-a)\log x),
\end{align}
so
\begin{align}
\Bigl{|} \tilde{f}(e^{-\frac{\tilde{\omega}}{b-a}}) - \sum_{i=0}^{n-1}\tilde{c}_i\sum_{j=1}^{N}\tilde{w}_j
e^{-\tilde{\omega} (\tilde{t}_j+\frac{1}{\gamma-1})} u_i(\tilde{t}_j) \Bigr{|}
\leq \sum_{i=0}^{n-1}\abs{\tilde{c}_i} \tilde{E}_i,
\end{align}
which means that
\begin{align}
\label{eq:rep1}
\Bigl{|} \tilde{f} (e^{-\frac{\tilde{\omega}}{b-a}})-
\sum_{i=0}^{n-1}\tilde{c}_i\sum_{j=1}^{N}\tilde{w}_j
e^{-\tilde{\omega} \bar{t}_j} u_i(\bar{t}_j-\frac{1}{\gamma-1}) \Bigr{|}
\leq \sum_{i=0}^{n-1}\abs{\tilde{c}_i} \tilde{E}_i,
\end{align}
where $\tilde{\omega}=-(b-a)\log x$ and
$\bar{t}_j=\tilde{t}_j+\frac{1}{\gamma-1}$, for all $j=1$, $2$, \ldots , $N$.
Substituting $e^{-\frac{\tilde{\omega}}{b-a}}=x$ into \Cref{eq:rep1}, 
we define the approximation $f_N$
to $\tilde{f}$, 
\begin{align}
\label{eq:rep2}
&\hspace{-13mm}f_N(x)=
\sum_{i=0}^{n-1} \tilde{c}_i\sum_{j=1}^{N}\tilde{w}_j
u_i(\bar{t}_j-\frac{1}{\gamma-1})x^{(b-a)\bar{t}_j}, \notag \\
&\hspace{-4mm} :=\sum_{j=1}^{N} {{c}}_j x^{(b-a)\bar{t}_j}, \qquad x\in [0,1],
\end{align}
where ${{c}}_j =\tilde{w}_j\sum_{i=0}^{n-1} \tilde{c}_i
u_i(\bar{t}_j-\frac{1}{\gamma-1})$, for $j=1$, $2$, \ldots , $N$.
Letting $t_j=(b-a)\bar{t}_j$, we have $t_j=(b-a)\tilde{t}_j+a$,
for $j=1$, $2$, \ldots , $N$.
\Cref{eq:rep2} and \Cref{eq:normct2} imply that
\begin{align}
\hspace{-20mm}\norm{{{c}}}_2
&\leq \norm{\tilde{w}}_1\cdot \max_{1\leq j \leq N} \Bigl{|}
\sum_{i=0}^{n-1} \tilde{c}_i u_i(\bar{t}_j -\frac{1}{\gamma-1})\Bigr{|} \notag \\
&\leq \norm{\tilde{w}}_1\cdot 
\sum_{i=0}^{n-1} {|}\tilde{c}_i {|} \norm{u_i}_{L^{\infty}[0,1]} \notag \\
&\leq \norm{\tilde{w}}_1 {|}\sigma{|} \cdot
\sum_{i=0}^{n-1} \norm{u_i}^2_{L^{\infty}[0,1]} \notag \\
&\leq \norm{\tilde{w}}_1 |\sigma|
\cdot {U_n}^2,
\end{align}
where $U_n$ is defined in \Cref{eq:Un}.
The approximation error of $f_N$ to $\tilde{f}$ can be bounded by 
\begin{align}
\label{eq:diff2}
&\hspace{-10mm} \norm{\tilde{f}-{f}_N}_{L^{\infty}[0,1]}\leq 
{\sum_{i=0}^{n-1}\abs{\tilde{c}_i}\tilde{E}_i} \notag \\
&\hspace{17mm} \leq \sum_{i=0}^{n-1} | {\sigma}| \norm{u_i}_{L^{\infty}[0,1]} 
(\alpha_n V_n +A^{\infty}_n \norm{u_i}_{L^{\infty}[0,1]} \norm{\tilde{w}}_1) \notag \\
&\hspace{17mm} \leq |\sigma|\cdot (\alpha_n  U_n V_n+
A^{\infty}_n \sum^{n-1}_{i=0}\norm{u_i}^2_{L^{\infty}[0,1]} \norm{\tilde{w}}_1) \notag \\
&\hspace{17mm} \leq |\sigma|\cdot (\alpha_n  U_n V_n+
A^{\infty}_n {U_n}^2 \norm{\tilde{w}}_1).
\end{align}
Thus, we obtain the bound on the approximation error of $f_N$ to $f$ as
\begin{align}
\label{eq:fNf}
&\hspace{-3mm} \norm{f-f_N}_{L^{\infty}[0,1]} \leq 
\norm{f-\tilde{f}}_{L^{\infty}[0,1]}+  
\norm{\tilde{f}-f_N}_{L^{\infty}[0,1]}  \notag \\
&\hspace{23mm}\leq |\sigma|\alpha_n
\Bigl{(} \frac{A^{\infty}_n}{\alpha_n}+U_n V_n+
\frac{A^{\infty}_n}{\alpha_n}{U_n}^2\norm{\tilde{w}}_1\Bigr{)},
\end{align}
which is approximately equal to $|\sigma|\alpha_n$, since $A^{\infty}_n \approx \alpha_n$,
and $U_n$, $V_n$ and $\norm{\tilde{w}}_1$ are small.
\end{proof}

\section{Numerical Approximation and Error Analysis}
\label{sec:NAEA}
In the previous section, we have shown that, given any function $f$ of
the form \Cref{eq:repf} and any quadrature rule such that 
$E_1 \leq {\alpha_n}^2$, where $E_1$ is defined in \Cref{eq:E1},
there exists  
a coefficient vector $c \in \mathbb{R}^{N}$ 
such that,
letting $t_1$, $t_2$, \ldots , $t_N$ denote the quadrature nodes  
shifted to the interval $[a,b]$, 
\begin{align}
\label{eq:hatf2}
{f}_N(x)=\sum_{j=1}^{N} c_j x^{t_j}
\end{align}
is uniformly close to $f$, to within an error given by \Cref{eq:ferror}.

In this section, we show that, 
by choosing a quadrature rule with quadrature nodes $s_1$, $s_2$,
\ldots , $s_N$ such that 
$E_2 \leq \frac{{\alpha_n}^2}{2{n}}$,
where $E_2$ is defined in \Cref{eq:E2},
and letting 
\begin{align}
x_j=e^{-\frac{s_j}{b-a}},
\end{align}
for
$j=1$, $2$, \ldots , $N$,
we can construct
an approximation
\begin{align}
\label{eq:hatf3}
\hat{f}_N(x)=\sum_{j=1}^{N} \hat{c}_j x^{t_j}
\end{align}
which is also uniformly close to $f$,
by numerically solving a linear system
\begin{align}
\label{eq:VC=F}
Vc=F,
\end{align}
for the coefficient vector $\hat{c} \in \mathbb{R}^N$,
where
\begin{align}
\label{eq:V}
V=
\begin{pmatrix}
x^{t_1}_1& x^{t_2}_1 & \   \ldots& \ x^{t_N}_1\\
x^{t_1}_2& x^{t_2}_2 & \   \ldots& \ x^{t_N}_2\\
\vdots & \vdots& \  \ddots &   \vdots\\
x^{t_1}_N& x^{t_2}_N & \   \ldots& \ x^{t_N}_N
\end{pmatrix} \in \mathbb{R}^{N \times N},
\end{align}
and
\begin{align}
\label{eq:F}
F=({f}(x_1), {f}(x_2), \ldots, {f}(x_N))^{T}\in \mathbb{R}^{N}. 
\end{align}
The uniform approximation error of $\hat{f}_N$ to $f$ over $[0,1]$ is bounded in 
\Cref{thm:globalerr}. Recall that 
$E_1 \leq \alpha^2_n$ and 
$E_2 \leq \frac{{\alpha_n}^2}{2{n}}$ when $N \approx 2n$
and the quadrature nodes are chosen to be the roots of $u_N(t)$
and $v_N(x)$
(see \Cref{sec:prac}).

In the following lemma,
we establish upper bounds on the norm
and the residual
of the perturbed TSVD solution $\hat{c}$ to the linear system in \Cref{eq:VC=F},
in terms of the norm of the 
coefficient vector $c$ in \Cref{eq:hatf2}.

\begin{lemma}
\label{lem:hatck}
Let $V \in \mathbb{R}^{N \times N}$, 
$F \in \mathbb{R}^{N}$, and
$\epsilon >0$.
Suppose that
\begin{align}
\label{eq:original2}
\hat{c}_k = (V+\delta V)_k^\dagger (F+\delta F),
\end{align}
where
$(V+\delta V)_k^\dagger$ is the pseudo-inverse of the $k$-TSVD of $V+\delta V$,
so that 
\begin{align}
\hat{\alpha}_k \geq \epsilon \geq \hat{\alpha}_{k+1},
\end{align}
where $\hat{\alpha}_k$ and $\hat{\alpha}_{k+1}$ denote the $k$th
and $(k+1)$th largest singular values of $V+\delta V$, defining 
$\hat{\alpha}_{N+1} := 0$,
where $\delta V \in \mathbb{R}^{N \times N}$ and
$\delta F \in \mathbb{R}^N$, 
with
\begin{align}
\norm{\delta V}_2\leq\epsilon_0\cdot\mu_1 < \frac{\epsilon}{2},
\end{align}
and
\begin{align}
\norm{\delta F}_2\leq\epsilon_0\cdot\mu_2,
\end{align}
for some $\epsilon_0$, $\mu_1$, $\mu_2 >0$.
Suppose further that
\begin{align}
\label{eq:original}
V c = F+\Delta F,
\end{align}
for some
$\Delta F \in \mathbb{R}^{N}$ and $c \in \mathbb{R}^N$.
Then,
\begin{align}
\label{eq:normhatck}
\norm{\hat{c}_k}_2 \leq \frac{1}{\hat{\alpha}_k}(2\epsilon+\hat{\alpha}_k)
\norm{c}_2+\frac{1}
{\hat{\alpha}_k}
(\norm{\Delta F}_2+\epsilon_0 \cdot \mu_2),
\end{align}
and 
\begin{align}
\norm{V\hat{c}_k -(F+\Delta F)}_2 &\leq 5
\epsilon \norm{c}_2+\frac{3}{2}\norm{\Delta F}_2+
\frac{3}{2}\epsilon_0\cdot\mu_2.
\end{align}
\end{lemma}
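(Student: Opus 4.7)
The plan is to reduce this lemma directly to \Cref{thm:tsvd}, which is the abstract TSVD perturbation bound already proved in \Cref{sec:pre}. The key observation is that the setup here is essentially the same as that theorem, once one correctly identifies the roles of the matrix, right-hand side, and perturbations.

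First I would set up the correspondence with \Cref{thm:tsvd} as follows: take $A = V$, $x = c$, and $b = F + \Delta F$, so that the hypothesis $Vc = F + \Delta F$ matches $Ax = b$. The TSVD solution in \Cref{thm:tsvd} is $\hat{x}_k = (A+E)^\dagger_k (b+e)$, whereas here we are given $\hat{c}_k = (V+\delta V)^\dagger_k (F + \delta F)$. To align these, I take $E = \delta V$ and let $e = (F + \delta F) - (F + \Delta F) = \delta F - \Delta F$, so that $b + e = F + \delta F$ exactly. With this identification, the hypothesis $\|E\|_2 < \epsilon/2$ is supplied by $\|\delta V\|_2 \le \epsilon_0 \mu_1 < \epsilon/2$, and the singular-value ordering $\hat{\alpha}_k \ge \epsilon \ge \hat{\alpha}_{k+1}$ is exactly what \Cref{thm:tsvd} requires.

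Next I would apply \Cref{thm:tsvd} to obtain
\begin{align}
\norm{\hat{c}_k}_2 \le \frac{1}{\hat{\alpha}_k}\bigl(2\epsilon \norm{c}_2 + \norm{e}_2\bigr) + \norm{c}_2,
\end{align}
and then use the triangle inequality $\norm{e}_2 \le \norm{\delta F}_2 + \norm{\Delta F}_2 \le \epsilon_0 \mu_2 + \norm{\Delta F}_2$. Rearranging the $\norm{c}_2$ terms into the form $\frac{1}{\hat{\alpha}_k}(2\epsilon + \hat{\alpha}_k)\norm{c}_2$ yields the stated bound on $\norm{\hat{c}_k}_2$. For the residual inequality, \Cref{thm:tsvd} gives
\begin{align}
\norm{V \hat{c}_k - (F + \Delta F)}_2 \le 5\epsilon \norm{c}_2 + \tfrac{3}{2}\norm{e}_2,
\end{align}
and applying the same triangle inequality on $\norm{e}_2$ produces the claimed bound with $\tfrac{3}{2}\norm{\Delta F}_2 + \tfrac{3}{2}\epsilon_0 \mu_2$ on the right.

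Because the lemma is essentially a restatement of \Cref{thm:tsvd} specialized to our Vandermonde-like collocation setting, there is no real analytical obstacle. The only thing to be careful about is the bookkeeping step of absorbing $\Delta F$ into the perturbation vector $e$; this is what produces the extra $\norm{\Delta F}_2$ terms in both conclusions. All norm inequalities used are triangle inequalities, and the hypothesis $\norm{\delta V}_2 \le \epsilon_0 \mu_1 < \epsilon/2$ is tailored precisely so that \Cref{thm:tsvd} applies without modification.
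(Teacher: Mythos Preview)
Your proposal is correct and takes essentially the same approach as the paper: identify $A=V$, $x=c$, $b=F+\Delta F$, $E=\delta V$, and $e=\delta F-\Delta F$, then apply \Cref{thm:tsvd} and bound $\norm{e}_2$ by the triangle inequality. The only cosmetic difference is that the paper writes $e=-\Delta F+\delta F$ and goes straight to the final form of the bounds without explicitly noting the rearrangement $\frac{2\epsilon}{\hat\alpha_k}\norm{c}_2+\norm{c}_2=\frac{1}{\hat\alpha_k}(2\epsilon+\hat\alpha_k)\norm{c}_2$.
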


\begin{proof}
By \Cref{eq:original2}, we have
\begin{align}
(V+\delta V)_k \hat{c}_k = F+\delta F &= F+\Delta F-\Delta F+\delta F
= F+\Delta F +e,
\end{align}
where
$e:=-\Delta F+\delta F$. 
Thus, \Cref{thm:tsvd} implies that 
\begin{align}
\label{eq:normchatk}
\norm{\hat{c}_k}_2 &\leq
\frac{1}{\hat{\alpha}_k}(2\epsilon+\hat{\alpha}_k)
\norm{c}_2+\frac{1}
{\hat{\alpha}_k}
\norm{-\Delta F+\delta F}_2 \notag \\
&\leq
\frac{1}{\hat{\alpha}_k}(2\epsilon+\hat{\alpha}_k)
\norm{c}_2+\frac{1}
{\hat{\alpha}_k}
(\norm{\Delta F}_2+\epsilon_0 \cdot \mu_2),
\end{align}
and that
\begin{align}
\norm{V\hat{c}_k -(F+\Delta F)}_2 &\leq 5 
\epsilon \norm{c}_2+\frac{3}{2}\norm{\Delta F}_2+
\frac{3}{2}\epsilon_0\cdot\mu_2.
\end{align}
\end{proof}
The following observation 
bounds the backward error,
$\norm{V\hat{c}_k-F}_2$,
where $\hat{c}_k$ is the TSVD solution to the perturbed 
linear system, defined in \Cref{eq:original2}.

\begin{observation}
\label{obs:hatck} 
According to \Cref{lem:hatck}, 
the TSVD solution $\hat{c}_k$ to the perturbed linear system
is bounded by the norm of $c$, as described in \Cref{eq:normhatck},
where $c$ is the exact solution to the linear system $V{c} =F+\Delta F$,
and satisfies \Cref{eq:normchat}. Thus,
the resulting backward error is bounded by 
\begin{align}
\label{eq:back}
&\hspace{-10mm}\norm{V\hat{c}_k-F}_2 
=\bnorm{V\hat{c}_k -
\bigl{(}F+\Delta F\bigr{)}+\Delta F}_2 \notag \\
&\hspace{10mm}
\leq \bnorm{V\hat{c}_k -
\bigl{(}F+\Delta F\bigr{)}}_2+\norm{\Delta F}_2 \notag \\
&\hspace{10mm}
\leq 
5\epsilon \norm{c}_2+\frac{5}{2}\norm{\Delta F}_2+
\frac{3}{2}\epsilon_0\cdot\mu_2.
\end{align}
\end{observation}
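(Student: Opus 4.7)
The plan is to obtain the bound on $\norm{V\hat{c}_k - F}_2$ as a direct consequence of the second inequality in \Cref{lem:hatck}, which already bounds $\norm{V\hat{c}_k - (F + \Delta F)}_2$. The key observation is that the linear system we actually wish to solve numerically is $V c_{\text{true}} = F$, whose right-hand side is the vector of function values used by the algorithm, whereas \Cref{lem:hatck} is phrased in terms of the perturbed system $V c = F + \Delta F$ that arose from the approximation in \Cref{sec:interp}. The shift between these two right-hand sides is exactly $\Delta F$, so the bound should follow from a single triangle inequality.

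Concretely, I would write $V\hat{c}_k - F = \bigl(V\hat{c}_k - (F + \Delta F)\bigr) + \Delta F$, and then apply the triangle inequality to obtain
\begin{align}
\norm{V\hat{c}_k - F}_2 \leq \norm{V\hat{c}_k - (F+\Delta F)}_2 + \norm{\Delta F}_2.
\end{align}
The first term on the right is bounded directly by the residual estimate in \Cref{lem:hatck}, namely $5\epsilon\norm{c}_2 + \tfrac{3}{2}\norm{\Delta F}_2 + \tfrac{3}{2}\epsilon_0 \mu_2$, while the second contributes an additional $\norm{\Delta F}_2$. Collecting the two copies of $\norm{\Delta F}_2$ yields the coefficient $\tfrac{5}{2}$ in front of $\norm{\Delta F}_2$ in the final bound, while $5\epsilon\norm{c}_2$ and $\tfrac{3}{2}\epsilon_0 \mu_2$ carry over unchanged.

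There is no real obstacle here: the entire argument is a one-line triangle-inequality manipulation on top of the already established residual bound from \Cref{lem:hatck}. The purpose of recording this as a separate observation is that it presents the backward-error bound in a form that can be fed directly into the forward-error analysis of the approximation $\hat{f}_N$ to $f$ in the next section, where the residual is naturally measured against $F$ (the values of the target function $f$ at the collocation points) rather than against the auxiliary perturbed right-hand side $F + \Delta F$.
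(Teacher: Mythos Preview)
Your proposal is correct and is essentially identical to the paper's own argument: both add and subtract $\Delta F$, apply the triangle inequality, invoke the residual bound from \Cref{lem:hatck}, and combine the resulting $\tfrac{3}{2}\norm{\Delta F}_2$ with the extra $\norm{\Delta F}_2$ to obtain the $\tfrac{5}{2}$ coefficient. There is nothing to add or correct.
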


Although the interpolation matrix $V$ in the basis of 
$\{x^{{t}_j}\}_{j=1}^{N}$ tends to be ill-conditioned,
resulting in a loss of stability in the solution to the linear system 
in \Cref{eq:VC=F}, 
we have shown in \Cref{lem:hatck} and \Cref{obs:hatck} that,
when
the TSVD is used to solve the linear system
in \Cref{eq:VC=F},
the backward error, $\norm{V\hat{c}_k-F}_2$,
which measures the discrepancy between $f$ and $\hat{f}_N$
at the collocation points,
is nonetheless small.
% This results can be used later to establish that 
% $\hat{f}_N\approx f$, where
% \begin{align}
% \hat{f}_N(x) =\sum_{j=1}^{N} \hat{c}_{k,j} x^{t_j}.
% \end{align}

The following lemma bounds the $L^{\infty}$-norm of a
function of the form \Cref{eq:repf},
in terms of its values at the collocation points $\{x_j\}_{j=1}^{N}$. The constant
appearing in this bound serves the same role as the Lebesgue constant for polynomial 
interpolation.

\begin{lemma}
\label{lem:normp}
Suppose that 
$s_1$, $s_2$, \ldots , $s_N$ and
$w_1$, $w_2$, \ldots , $w_N$ are the
nodes and weights of a quadrature rule 
such that 
$E_2 
\leq \frac{{\alpha_n}^2}{2{n}}$, 
where $E_2$ is defined in \Cref{eq:E2},
and that the collocation points 
$X:=(x_j)_{j=1}^{N}$ are defined by the 
formula
$x_j=e^{-\frac{s_j}{b-a}}$,
for $j=1$, $2$, \ldots , $N$.
Let $f(x)$ be a function of the form \Cref{eq:repf},
for some signed Radon measure $\sigma$, and 
let $f(X) \in \mathbb{R}^{N}$ be the vector of values of $f(x)$ sampled
at $X$. Then, 
\begin{align}
\label{eq:truefl}
\hspace{-25mm}
\norm{f}_{L^{\infty}[0,1]} \leq
\sqrt{2}\cdot\max_{1\leq j\leq N}\sqrt{w_j}\cdot V_n \cdot
\norm{f(X)}_2+
|\sigma| A^{\infty}_n\cdot(1+\sqrt{N}\cdot\sqrt{2}\max_{1\leq j\leq N}\sqrt{w_j}\cdot V_n),
\end{align}
where $A^{\infty}_n$ and $V_n$ are defined in \Cref{eq:Ainfty} and \Cref{eq:Vn},
respectively.
\end{lemma}

\begin{proof}
Recall from the proof of \Cref{thm:err} that 
$f(x)$ can be approximated by
\begin{align}
\label{eq:tildec3}
\hspace{-10mm} \tilde{f}(x)&=
\sum_{i=0}^{n-1}\tilde{c}_i\alpha_i v_i(-(b-a)\log x), 
\end{align}
such that
\begin{align}
\label{eq:ferrnorm}
\hspace{-10mm} \norm{f-\tilde{f}}_{L^{\infty}[0,1]}
\leq |{\sigma}|A^{\infty}_n,
\end{align}
where $A^{\infty}_n$ is defined in \Cref{eq:Ainfty}, and $\tilde{c}_i$, 
for $i=0$, $1$, \ldots , $n-1$,
is given by
\Cref{eq:deftc}.
Let
$\tilde{f}(X)=\bigl{(}\tilde{f}(x_1)$,
$\tilde{f}(x_2)$, \ldots , $\tilde{f}(x_N)\bigr{)}^T$.
Since 
\Cref{col:Anorm} implies that
$\norm{A^{\dagger}}_2 <
\sqrt{2} \max_{1 \leq j \leq N} \sqrt{w_j}$,
where the matrix $A^{\dagger} \in \mathbb{R}^{n \times N}$ is the
pseudo-inverse of $A$,
the coefficient vector $\tilde{c}$ in \Cref{eq:tildec3}
can be found stably by 
the formula
\begin{align}
(\tilde{c}_i \alpha_i) =A^{\dagger}\tilde{f}(X).
\end{align}
From \Cref{eq:tildec3}, we have
\begin{align}
&\hspace{0mm} \norm{\tilde{f}}_{L^{\infty}[0,1]}= 
\bbnorm{\sum_{i=0}^{n-1} \tilde{c}_i{\alpha}_i
v_i(-(b-a)\log{x})}_{L^{\infty}[0,1]} \notag \\
&\hspace{17mm} \leq \norm{(\tilde{c}_i \alpha_i)}_2
\sqrt{\sum_{i=0}^{n-1}\norm{v_i}^2_{L^{\infty}[0,\infty)}} \notag \\
&\hspace{17mm}\leq \norm{A^{\dagger}}_2\cdot \norm{\tilde{f}(X)}_2 \cdot V_n \notag \\
&\hspace{17mm}\leq \sqrt{2}\cdot \max_{1\leq j\leq N}\sqrt{w_j}\cdot \norm{\tilde{f}(X)}_2 \cdot V_n,
\end{align}
where $V_n$ is defined in \Cref{eq:Vn}.
Since
\begin{align}
\bigl{|}f(x_j)-\tilde{f}(x_j)\bigr{|}\leq \norm{f-\tilde{f}}_{L^{\infty}[0,1]},
\end{align}
for all $j=1$, $2$, \ldots , $N-1$, we have 
\begin{align}
\norm{\tilde{f}(X)}_2 \leq \sqrt{N} \norm{f-\tilde{f}}_{L^{\infty}[0,1]}+
\norm{{f}(X)}_2.
\end{align}
It follows that 
\begin{align}
&\hspace{-25mm}\norm{f}_{L^{\infty}[0,1]} \leq \norm{f-\tilde{f}}_{L^{\infty}[0,1]} + 
\norm{\tilde{f}}_{L^{\infty}[0,1]} \notag \\
&\hspace{-8mm}\leq \norm{f-\tilde{f}}_{L^{\infty}[0,1]} + 
\sqrt{2}\cdot \max_{1\leq j\leq N}\sqrt{w_j}\cdot \norm{\tilde{f}(X)}_2 \cdot V_n \notag \\
&\hspace{-8mm}\leq \norm{f-\tilde{f}}_{L^{\infty}[0,1]} + 
\sqrt{2}\cdot \max_{1\leq j\leq N}\sqrt{w_j}\cdot
(\sqrt{N} \norm{f-\tilde{f}}_{L^{\infty}[0,1]}+
\norm{{f}(X)}_2) \cdot V_n \notag \\
&\hspace{-8mm}\leq
\sqrt{2}\cdot\max_{1\leq j\leq N}\sqrt{w_j}\cdot V_n \cdot
\norm{f(X)}_2+
|\sigma| A^{\infty}_n\cdot(1+\sqrt{N}\cdot\sqrt{2}\max_{1\leq j\leq N}\sqrt{w_j}\cdot V_n).
\end{align}
\end{proof}

The following theorem provides an upper bound on the global approximation error
of $\hat{f}_N$ to $f$,
when the coefficient vector in the approximation is computed by
solving $Vc=F$
using the TSVD.

\begin{theorem}
\label{thm:globalerr}
Let $f(x)$ be a function of the form \Cref{eq:repf},
for some signed Radon measure $\sigma(\mu)$. 
Suppose that 
$\tilde{t}_1$, $\tilde{t}_2$, \ldots , $\tilde{t}_N$ 
are the nodes of a quadrature rule such that $E_1 \leq \alpha^2_n$,
and that $s_1$, $s_2$, \ldots , $s_N$ are the nodes of 
a quadrature rule such that  
$E_2 \leq \frac{{\alpha_n}^2}{2{n}}$, 
where $E_1$ and $E_2$ are defined in \Cref{eq:E1} and \Cref{eq:E2},
respectively.
Let $t_j=(b-a)\tilde{t}_j +a$, 
for $j=1$, $2$, \ldots , $N$, 
and let 
$x_1$, $x_2$, \ldots , $x_N$ 
be the collocation points defined by the
formula
$x_j=e^{-\frac{s_j}{b-a}}$. 
Suppose that $V \in \mathbb{R}^{N \times N}$ is defined in \Cref{eq:V}
and $F \in \mathbb{R}^N$ is defined in \Cref{eq:F},  
and let $\epsilon >0$.
Suppose further that
\begin{align}
\label{eq:VCF1}
\hat{c}_k = (V+\delta V)_k^\dagger (F+\delta F),
\end{align}
where
$(V+\delta V)_k^\dagger$ is the pseudo-inverse of the $k$-TSVD of $V+\delta V$,
so that 
\begin{align}
\hat{\alpha}_k \geq \epsilon \geq \hat{\alpha}_{k+1},
\end{align}
where $\hat{\alpha}_k$ and $\hat{\alpha}_{k+1}$ denote the $k$th
and $(k+1)$th largest singular values of $V+\delta V$, defining $\hat{\alpha}_{N+1}:=0$,
where $\delta V \in \mathbb{R}^{N \times N}$ and
$\delta F \in \mathbb{R}^N$, 
with
\begin{align}
\norm{\delta V}_2\leq\epsilon_0\cdot\mu_1 < \frac{\epsilon}{2},
\end{align}
and
\begin{align}
\norm{\delta F}_2\leq\epsilon_0\cdot\mu_2,
\end{align}
for some $\epsilon_0$, $\mu_1$, $\mu_2>0$.
Let
\begin{align}
\hat{f}_N(x) =\sum_{j=1}^{N} \hat{c}_{k,j} x^{t_j},
\end{align}
with $\hat{c}_k$ defined in \Cref{eq:VCF1}.
Then,
\begin{align}
\label{eq:interperr}
&\hspace{-20mm}\norm{f-\hat{f}_N}_{L^{\infty}[0,1]} \notag \\
&\hspace{-25mm} \leq
\sqrt{2}\cdot \max_{1\leq j\leq N}\sqrt{w_j}\cdot V_n \cdot
\biggl{(}5 \epsilon\cdot \norm{\tilde{w}}_1 |\sigma|\cdot {U_n}^2
+\frac{5}{2}
\sqrt{N}\cdot |{\sigma}|\alpha_n 
 \Bigl{(}\frac{A^{\infty}_n}{\alpha_n}+U_n V_n +\frac{A^{\infty}_n}{\alpha_n}{U_n}^2 \norm{\tilde{w}}_1\Bigr{)} \notag \\
&\hspace{-4mm} +\frac{3}{2}\epsilon_0\cdot \mu_2 \biggr{)} +
(|\sigma|+\sqrt{N}\norm{\hat{c}_k}_2)\cdot A^{\infty}_n\cdot(1+\sqrt{N}\cdot\sqrt{2}\max_{1\leq j\leq N}\sqrt{w_j}\cdot V_n),
\end{align}
where 
\begin{align}
\label{eq:interphatck}
&\hspace{-25mm}\norm{\hat{c}_k}_2 
\leq \frac{1}{\hat{\alpha}_k}(2\epsilon+\hat{\alpha}_k)
\norm{\tilde{w}}_1 |\sigma| \cdot {U_n}^2+\frac{1}
{\hat{\alpha}_k}\biggl{(}\sqrt{N}\cdot |{\sigma}|\alpha_n 
 \Bigl{(}\frac{A^{\infty}_n}{\alpha_n}+U_n V_n +\frac{A^{\infty}_n}{\alpha_n}{U_n}^2 \norm{\tilde{w}}_1\Bigr{)} \notag \\
&\hspace{-4mm} +\epsilon_0\cdot\mu_2\biggr{)}. 
\end{align}
\end{theorem}
\begin{proof}
We observe that
\begin{align}
\hat{f}_N(x)=\int_{a}^{b} x^{\mu} \, \hat{\sigma}_N(\mu) \, d\mu,
\end{align}
for the signed Radon measure 
\begin{align}
\hat{\sigma}_N(t)=\sum_{j=1}^{N} \hat{c}_{k,j}\delta(t-t_j),
\end{align}
where $\delta(t)$ is the Dirac delta function.
Therefore, $f(x)-\hat{f}_N(x)$
can be rewritten as 
\begin{align}
f(x)-\hat{f}_N(x)=\int_{a}^{b} x^{\mu}
\bigl{(}\sigma(\mu)-\hat{\sigma}_N(\mu)\bigr{)}\, d\mu,
\end{align}
where $\sigma(\mu)$ is defined in \Cref{eq:repf}.
By \Cref{thm:err}, there exists a vector $c \in \mathbb{R}^N$, such that
\begin{align}
{f}_N(x)=\sum_{j=1}^N c_jx^{t_j}
\end{align}
is uniformly close to $f$, with an error bounded by \Cref{eq:ferror}.
Let $X:=(x_j)_{j=1}^{N}$ and
$\Delta F:= f(X)-f_N(X)$. Then, 
\begin{align}
\label{eq:normdf}
&\hspace{10mm} \norm{\Delta F}_2  \leq \sqrt{N}\cdot\norm{f-f_N}_{L^{\infty}[0,1]} \notag \\ 
&\hspace{22mm} \leq \sqrt{N}\cdot |{\sigma}|\alpha_n 
 \Bigl{(}\frac{A^{\infty}_n}{\alpha_n}+U_n V_n +\frac{A^{\infty}_n}{\alpha_n}{U_n}^2 \norm{\tilde{w}}_1\Bigr{)},
\end{align}
where $A^{\infty}_n$, $U_n$ and $V_n$ are defined in \Cref{eq:Ainfty}, \Cref{eq:Un}
and \Cref{eq:Vn},
respectively.
By \Cref{eq:back} and \Cref{eq:normchat}, we have
\begin{align}
\label{eq:trueback}
&\hspace{-12mm}\norm{f(X)-\hat{f}_N(X)}_2  \notag \\
&\hspace{-17mm} = \norm{V\hat{c}_k-F}_2 \notag \\
&\hspace{-17mm} \leq 
5\epsilon \norm{c}_2+\frac{5}{2}\norm{\Delta F}_2+
\frac{3}{2}\epsilon_0\cdot\mu_2 \notag \\
&\hspace{-17mm}\leq 5 \epsilon\cdot \norm{\tilde{w}}_1 |\sigma|\cdot {U_n}^2
+\frac{5}{2}
\sqrt{N}\cdot |{\sigma}|\alpha_n 
 \Bigl{(}\frac{A^{\infty}_n}{\alpha_n}+U_n V_n +\frac{A^{\infty}_n}{\alpha_n}{U_n}^2 \norm{\tilde{w}}_1\Bigr{)}
+\frac{3}{2}\epsilon_0\cdot \mu_2,
\end{align}
where $\tilde{w}$ is the vector of 
the quadrature weights such that $E_1 \leq \alpha^2_n$.
It follows from \Cref{lem:normp} that the uniform error of the approximation
of 
$\hat{f}_N$ to $f$ is bounded as 
\begin{align}
&\hspace{-20mm}\norm{f-\hat{f}_N}_{L^{\infty}[0,1]} \notag \\
&\hspace{-25mm}
\leq \sqrt{2}\cdot \max_{1\leq j\leq N}\sqrt{w_j}\cdot V_n \cdot
\biggl{(}5 \epsilon\cdot \norm{\tilde{w}}_1 |\sigma|\cdot {U_n}^2
+\frac{5}{2}
\sqrt{N}\cdot |{\sigma}|\alpha_n 
 \Bigl{(}\frac{A^{\infty}_n}{\alpha_n}+U_n V_n +\frac{A^{\infty}_n}{\alpha_n}{U_n}^2 \norm{\tilde{w}}_1\Bigr{)} \notag \\
&\hspace{-4mm} +\frac{3}{2}\epsilon_0\cdot \mu_2 \biggr{)} 
+|\sigma-\hat{\sigma}_N| A^{\infty}_n\cdot(1+\sqrt{N}\cdot\sqrt{2}\max_{1\leq j\leq N}\sqrt{w_j}\cdot V_n).
\end{align}
Since $\abs{\sigma-\hat{\sigma}_N} \leq \abs{\sigma}+\abs{\hat{\sigma}_N}$ and 
$\abs{\hat{\sigma}_N} \leq \norm{\hat{c}_k}_1$, we have
\begin{align}
\label{eq:resinterp}
&\hspace{-20mm}\norm{f-\hat{f}_N}_{L^{\infty}[0,1]} \notag \\
&\hspace{-24mm} \leq
\sqrt{2}\cdot \max_{1\leq j\leq N}\sqrt{w_j}\cdot V_n \cdot
\biggl{(}5 \epsilon\cdot \norm{\tilde{w}}_1 |\sigma|\cdot {U_n}^2
+\frac{5}{2}
\sqrt{N}\cdot |{\sigma}|\alpha_n 
 \Bigl{(}\frac{A^{\infty}_n}{\alpha_n}+U_n V_n +\frac{A^{\infty}_n}{\alpha_n}{U_n}^2 \norm{\tilde{w}}_1\Bigr{)} \notag \\
&\hspace{-4mm} +\frac{3}{2}\epsilon_0\cdot \mu_2 \biggr{)} 
+(|\sigma|+\norm{\hat{c}_k}_1)\cdot A^{\infty}_n\cdot(1+\sqrt{N}\cdot\sqrt{2}\max_{1\leq j\leq N}\sqrt{w_j}\cdot V_n) \notag \\
&\hspace{-24mm} \leq
\sqrt{2}\cdot \max_{1\leq j\leq N}\sqrt{w_j}\cdot V_n \cdot
\biggl{(}5 \epsilon\cdot \norm{\tilde{w}}_1 |\sigma|\cdot {U_n}^2
+\frac{5}{2}
\sqrt{N}\cdot |{\sigma}|\alpha_n 
 \Bigl{(}\frac{A^{\infty}_n}{\alpha_n}+U_n V_n +\frac{A^{\infty}_n}{\alpha_n}{U_n}^2 \norm{\tilde{w}}_1\Bigr{)} \notag \\
&\hspace{-4mm}+\frac{3}{2}\epsilon_0\cdot \mu_2 \biggr{)}
+(|\sigma|+\sqrt{N}\norm{\hat{c}_k}_2)\cdot A^{\infty}_n\cdot(1+\sqrt{N}\cdot\sqrt{2}\max_{1\leq j\leq N}\sqrt{w_j}\cdot V_n),
\end{align}
where $\norm{\hat{c}_k}_2$ is bounded by  
substituting 
\Cref{eq:normchat} and \Cref{eq:normdf} into 
\Cref{eq:normhatck}, 
\begin{align}
\label{eq:normhatck2}
&\hspace{-22mm}\norm{\hat{c}_k}_2 \leq \frac{1}{\hat{\alpha}_k}(2\epsilon+\hat{\alpha}_k)
\norm{c}_2+\frac{1}
{\hat{\alpha}_k}(\norm{\Delta F}_2+\epsilon_0\cdot\mu_2) \notag \\
&\hspace{-13mm} \leq \frac{1}{\hat{\alpha}_k}(2\epsilon+\hat{\alpha}_k)
\norm{\tilde{w}}_1 |\sigma| \cdot {U_n}^2+\frac{1}
{\hat{\alpha}_k}\biggl{(}\sqrt{N}\cdot |{\sigma}|\alpha_n 
 \Bigl{(}\frac{A^{\infty}_n}{\alpha_n}+U_n V_n +\frac{A^{\infty}_n}{\alpha_n}{U_n}^2 \norm{\tilde{w}}_1\Bigr{)} \notag \\
&\hspace{26mm}  +\epsilon_0\cdot\mu_2\biggr{)}. 
\end{align}
\end{proof}

\begin{remark}
\label{rmk:error1}
By ignoring all the small terms in \Cref{eq:resinterp} and \Cref{eq:normhatck2},
and recalling that $A^{\infty}_n \approx \alpha_n$, 
we have
\begin{align}
\label{eq:resinterp2}
\norm{f-\hat{f}_N}_{L^{\infty}[0,1]}
\lesssim (\epsilon +\alpha_n) |\sigma| + \epsilon_0 \cdot  \mu_2+\alpha_n \norm{\hat{c}_k}_2,
\end{align}
where
\begin{align}
\label{eq:normhatck4}
\norm{\hat{c}_k}_2 \lesssim (\frac{\epsilon}{\hat{\alpha}_k}+1+\frac{\alpha_n}{\hat{\alpha}_k})|\sigma|+
\frac{\epsilon_0}{\hat{\alpha}_k}\mu_2.
\end{align}
Thus, 
\begin{align}
\label{eq:resinterp3}
\norm{f-\hat{f}_N}_{L^{\infty}[0,1]}
\lesssim (\epsilon +\alpha_n+\frac{\alpha_n\epsilon}{\hat{\alpha}_k}
+\frac{\alpha^2_n}{\hat{\alpha}_k}) |\sigma| + \epsilon_0 \cdot  \mu_2+\frac{\alpha_n}{\hat{\alpha}_k}\epsilon_0 \cdot \mu_2.
\end{align}
\end{remark}
%%%Since $\hat{\alpha}_k \geq \epsilon$, 
%%%\Cref{eq:resinterp3} becomes 
%%%\begin{align}
%%%\label{eq:resinterp4}
%%%\norm{f-\hat{f}_N}_{L^{\infty}[0,1]}
%%%\lesssim (\epsilon +\alpha_n+
%%%\frac{\alpha^2_n}{\epsilon}) |\sigma| + \epsilon_0 \cdot  \mu_2+\frac{\alpha_n}{\epsilon}\epsilon_0 \cdot \mu_2.
%%%\end{align}
%%%\label{eq:normhatck3}
%%%&\hspace{-3mm}\norm{\hat{c}_k}_2 
%%%\leq \frac{1}{\hat{\alpha}_k}(2\epsilon+\hat{\alpha}_k)
%%%\norm{\tilde{w}}_1 |\sigma| \cdot {U_n}^2+\frac{1}
%%%{\hat{\alpha}_k}\Bigl{(}\sqrt{N}\cdot |{\sigma}|\alpha_n 
%%% \Bigl{(}\frac{A^{\infty}}{\alpha_n}+U_n V_n +\frac{A^{\infty}}{\alpha_n}{U_n}^2 \norm{\tilde{w}}_1\Bigr{)}+\epsilon_0\cdot\mu_2\Bigr{)} \notag \\
%%%&\hspace{5mm}\leq 3
%%%\norm{\tilde{w}}_1 |\sigma| \cdot {U_n}^2+\frac{\alpha_n}
%%%{\epsilon}\Bigl{(}\sqrt{N}\cdot |{\sigma}|
%%% \Bigl{(}\frac{A^{\infty}}{\alpha_n}+U_n V_n +\frac{A^{\infty}}{\alpha_n}{U_n}^2
%%% \norm{\tilde{w}}_1\Bigr{)}+\frac{\epsilon_0}{\alpha_n}\mu_2\Bigr{)}. 
%%%\end{align}
%%%By ignoring the small terms in \Cref{eq:resinterp} and \Cref{eq:normhatck3},
%%%we have
%%%\begin{align}
%%%\label{eq:resinterp2}
%%%\norm{f-\hat{f}_N}_{L^{\infty}[0,1]}
%%%\leq (\epsilon +\alpha_n) |\sigma| + \epsilon_0 \cdot  \mu_2+\alpha_n \norm{\hat{c}_k}_2,
%%%\end{align}
%%%where
%%%\begin{align}
%%%\label{eq:normhatck4}
%%%\norm{\hat{c}_k}_2 \leq (1+\frac{\alpha_n}{\epsilon})|\sigma|+\frac{\epsilon_0}{\epsilon}\mu_2
%%%\end{align}
%%%\end{remark}
%%
Neglecting
all the insignificant terms,
the accuracy of the approximation depends
on $\alpha_n$, $\epsilon$, $\hat{\alpha}_k$ and $|{\sigma}|$, 
as well as the machine precision $\epsilon_0$.

Recalling that $\hat{\alpha}_k \geq \epsilon$. 
If we choose $\epsilon \approx \alpha_n$ in \Cref{eq:resinterp3}, then
$\frac{\alpha_n}{\hat{\alpha}_k}\leq \frac{\alpha_n}{\epsilon}\approx 1$ and, accordingly,
\begin{align}
\label{eq:emperr}
\norm{f-\hat{f}_N}_{L^{\infty}[0,1]} &\lesssim 
 (\epsilon+\alpha_n)\abs{\sigma}+
\epsilon_0 \mu_2 \notag \\
&\approx
\alpha_n\abs{\sigma}+
\epsilon_0 \mu_2.
\end{align}
Thus, the approximation error can achieve a bound that is roughly proportional to $\alpha_n \abs{\sigma}$.
Otherwise, if $\epsilon$ is significantly smaller than $\alpha_n$, then the error
will exceed $\alpha_n|\sigma|$ because of the term $\frac{{\alpha_n}^2}{\hat{\alpha}_k}$.

\section{Extension from Measures to Distributions}
\label{sec:ext}
In \Cref{sec:NAEA}, we presented an algorithm for approximating functions
of the form
\begin{align}
f(x)=\int_{a}^{b} x^{\mu} \sigma(\mu) \, d \mu, \qquad x\in [0,1],
\end{align}
where $\sigma$ is a signed Radon measure, and derived an estimate for the
uniform error of the approximation in \Cref{thm:globalerr}. In this section, we
observe that this same algorithm can be applied more generally to functions
of the form
\begin{align}
\label{eq:frepnew}
f(x) = \inner{\sigma(\mu), x^\mu},
\end{align}
where $\sigma \in \mathcal{D}'(\R)$ is a distribution supported on 
the interval $[a,b]$.
Since every distribution with compact support has a finite
order, it follows that $\sigma \in C^m([a,b])^*$ for some order $m\ge 0$.

Recall that  
\begin{align}
\abs{\inner{\sigma, \varphi}} \leq \norm{\sigma}_{C^m([a,b])^*}
\cdot \norm{\varphi}_{C^m([a,b])},
\end{align}
where
\begin{align}
\norm{\varphi}_{C^m([a,b])} = \sum_{n=0}^{m} \, \, \sup_{x \in [a,b]}
\abs{\varphi^{(n)}},
\end{align}
and
\begin{align}
\norm{\sigma}_{C^m([a,b])^*} = \sup_{\substack{\varphi \in C^m([a,b]) \\
\norm{\varphi}_{C^m([a,b])}=1}} \abs{\sigma(\varphi)}.
\end{align}
We can use the 
algorithm of \Cref{sec:NAEA} to approximate a function
of the form \Cref{eq:frepnew}, where the approximation error is 
bounded by the following
theorem, which generalizes \Cref{thm:globalerr}.

\begin{theorem}
\label{thm:globalerrdist}
Let $f(x)$ be a function of the form \Cref{eq:frepnew},
Suppose that 
$\tilde{t}_1$, $\tilde{t}_2$, \ldots , $\tilde{t}_N$ 
are the nodes of a quadrature rule such that $E_1 \leq \alpha^2_n$,
and that $s_1$, $s_2$, \ldots , $s_N$ are the nodes of 
a quadrature rule such that  
$E_2 \leq \frac{{\alpha_n}^2}{2{n}}$, 
where $E_1$ and $E_2$ are defined in \Cref{eq:E1} and \Cref{eq:E2},
respectively.
Let $t_j=(b-a)\tilde{t}_j +a$, 
for $j=1$, $2$, \ldots , $N$, 
and let 
$x_1$, $x_2$, \ldots , $x_N$ 
be the collocation points defined by the
formula
$x_j=e^{-\frac{s_j}{b-a}}$. 
Suppose that $V \in \mathbb{R}^{N \times N}$ is defined in \Cref{eq:V}
and $F \in \mathbb{R}^N$ is defined in \Cref{eq:F},  
and let $\epsilon >0$.
Suppose further that
\begin{align}
\label{eq:VCF2}
\hat{c}_k = (V+\delta V)_k^\dagger (F+\delta F),
\end{align}
where
$(V+\delta V)_k^\dagger$ is the pseudo-inverse of the $k$-TSVD of $V+\delta V$,
so that 
\begin{align}
\hat{\alpha}_k \geq \epsilon \geq \hat{\alpha}_{k+1},
\end{align}
where $\hat{\alpha}_k$ and $\hat{\alpha}_{k+1}$ denote the $k$th
and $(k+1)$th largest singular values of $V+\delta V$, defining $\hat{\alpha}_{N+1} :=0$,
where $\delta V \in \mathbb{R}^{N \times N}$ and
$\delta F \in \mathbb{R}^N$, 
with
\begin{align}
\norm{\delta V}_2\leq\epsilon_0\cdot\mu_1 < \frac{\epsilon}{2},
\end{align}
and
\begin{align}
\norm{\delta F}_2\leq\epsilon_0\cdot\mu_2,
\end{align}
for some $\epsilon_0$, $\mu_1$, $\mu_2 >0$.
Let
\begin{align}
\hat{f}_N(x) =\sum_{j=1}^{N} \hat{c}_{k,j} x^{t_j},
\end{align}
with $\hat{c}_k$ defined in \Cref{eq:VCF2}.
Then,
\begin{align}
\label{eq:interperr2}
&\hspace{-22mm}\norm{f-\hat{f}_N}_{L^{\infty}[0,1]} \notag \\
&\hspace{-27mm} \leq
\sqrt{2}\cdot \max_{1\leq j\leq N}\sqrt{w_j}\cdot V_n \cdot
\biggl{(}5 \epsilon\cdot \norm{\tilde{w}}_1 \cdot\norm{\sigma}_{C^m([a,b])^*} \cdot
\max_{0\leq i \leq n-1} \bnorm{u_i\bigl{(}\tfrac{t-a}{b-a}\bigr{)}}_{C^m([a,b])}
\cdot {U_n}^2 \notag \\
&\hspace{-19mm}+\frac{5}{2}
\sqrt{N}\cdot
\norm{\sigma}_{C^m([a,b])^*} \cdot
\max_{0\leq i \leq n-1} \bnorm{u_i\bigl{(}\tfrac{t-a}{b-a}\bigr{)}}_{C^m([a,b])}
\cdot \alpha_n 
 \Bigl{(}\frac{A^{\infty}_n}{\alpha_n}+U_n V_n +\frac{A^{\infty}_n}{\alpha_n}{U_n}^2 \norm{\tilde{w}}_1\Bigr{)} \notag \\
&\hspace{-19mm}+\frac{3}{2}\epsilon_0\cdot \mu_2 \biggr{)}
+(\norm{\sigma}_{C^m([a,b])^*} \cdot 
\max_{0\leq i \leq n-1} \bnorm{u_i\bigl{(}\tfrac{t-a}{b-a}\bigr{)}}_{C^m([a,b])}
+\sqrt{N}\norm{\hat{c}_k}_2)\cdot A^{\infty}_n \notag \\
&\hspace{-19mm}\cdot (1+\sqrt{N}\cdot\sqrt{2}\max_{1\leq j\leq N}\sqrt{w_j}\cdot V_n),
\end{align}
where 
\begin{align}
\label{eq:interphatck2}
&\hspace{-25mm}\norm{\hat{c}_k}_2 \leq 
\frac{1}{\hat{\alpha}_k}(2\epsilon+\hat{\alpha}_k)
\norm{\tilde{w}}_1 \cdot \norm{\sigma}_{C^m([a,b])^*} \cdot
\max_{0\leq i \leq n-1} \bnorm{u_i\bigl{(}\tfrac{t-a}{b-a}\bigr{)}}_{C^m([a,b])}
 \cdot {U_n}^2 \notag \\
 &\hspace{-5mm}+\frac{1}
{\hat{\alpha}_k}\biggl{(}\sqrt{N}\cdot \norm{\sigma}_{C^m([a,b])^*} \cdot
\max_{0\leq i \leq n-1} \bnorm{u_i\bigl{(}\tfrac{t-a}{b-a}\bigr{)}}_{C^m([a,b])}
\notag \\
&\hspace{-4mm} \cdot \alpha_n 
\Bigl{(}\frac{A^{\infty}_n}{\alpha_n}+U_n V_n +\frac{A^{\infty}_n}{\alpha_n}{U_n}^2 \norm{\tilde{w}}_1\Bigr{)}
 +\epsilon_0\cdot\mu_2\biggr{)}.
\end{align}
\end{theorem}
\begin{proof}
Since the proof closely resembles the one of \Cref{thm:globalerr}, we 
omit it here. The only difference is that $\abs{\sigma}$ in \Cref{eq:interperr}
is replaced by
$\norm{\sigma}_{C^m([a,b])^*} \cdot 
\max_{0\leq i \leq n-1} \bnorm{u_i\bigl{(}\frac{t-a}{b-a}\bigr{)}}_{C^m([a,b])}$,
due to the fact that
\begin{align}
\abs{\inner{\sigma, u_i\bigl{(}\tfrac{t-a}{b-a}\bigr{)}}} \leq \norm{\sigma}_{C^m([a,b])^*} \cdot
\bnorm{u_i\bigl{(}\tfrac{t-a}{b-a}\bigr{)}}_{C^m([a,b])},
\end{align}
where the term 
$\bnorm{u_i\bigl{(}\frac{t-a}{b-a}\bigr{)}}_{C^m([a,b])}$ is not negligible,
for $m \geq 1$. 
\end{proof}
Note that, when $\sigma$ is a signed Radon measure, the corresponding
distribution has order zero, and 
$\norm{\sigma}_{C([a,b])^*}=\abs{\sigma}$.
Thus, in this case, the above bound on 
$\norm{f-\hat{f}_N}_{L^{\infty}[0,1]}$ is exactly the same as the bound 
described in 
\Cref{eq:interperr}.
%%%
\begin{remark}
Similar to \Cref{rmk:error1},
the approximation error is approximately bounded as 
\begin{align}
\label{eq:resinterp4}
&\hspace{-25mm} \norm{f-\hat{f}_N}_{L^{\infty}[0,1]}
\lesssim (\epsilon +\alpha_n+\frac{\alpha_n\epsilon}{\hat{\alpha}_k}
+\frac{\alpha^2_n}{\hat{\alpha}_k})\norm{\sigma}_{C^m([a,b])^*} \cdot
\max_{0\leq i \leq n-1} \bnorm{u_i\bigl{(}\tfrac{t-a}{b-a}\bigr{)}}_{C^m([a,b])} \notag \\
&\hspace{7mm} + \epsilon_0 \cdot  \mu_2+\frac{\alpha_n}{\hat{\alpha}_k}\epsilon_0 \cdot \mu_2.
\end{align}
If we choose $\epsilon \approx \alpha_n$, \Cref{eq:resinterp4} becomes 
\begin{align}
\label{eq:resinterp5}
\hspace{-20mm} \norm{f-\hat{f}_N}_{L^{\infty}[0,1]}
\lesssim (\epsilon +\alpha_n
)\norm{\sigma}_{C^m([a,b])^*} \cdot
\max_{0\leq i \leq n-1} \bnorm{u_i\bigl{(}\tfrac{t-a}{b-a}\bigr{)}}_{C^m([a,b])} + \epsilon_0 \cdot  \mu_2.
\end{align}
\end{remark}

\section{Numerical Algorithm}
\label{sec:sum}
The steps of the numerical algorithm for constructing the approximations 
described in  
\Cref{thm:globalerr} and \Cref{thm:globalerrdist} can be summarized
as follows:
%%%
\begin{enumerate}
\item Given $f(x)$ of the form \Cref{eq:repf} or \Cref{eq:frepnew}, compute $\gamma=\frac{b}{a}$.
\item Compute the right singular functions of $T_{\gamma}$, $u_0$, $u_1$, \ldots, using the algorithm 
described in Section $4.1$ of \cite{laplace}. 
\item Compute the left singular functions of $T_{\gamma}$, $v_0$, $v_1$, \ldots, using the algorithm 
described in Section $4.1$ of \cite{laplace2}. 
\item Compute the singular values of $T_{\gamma}$, $\alpha_0$, $\alpha_1$, \ldots, using the algorithm 
described in Section $4.2$ of \cite{laplace2}. 
\item Find $n$ such that 
$\alpha_n |\sigma|$ is the desired approximation error, where $\sigma$ is defined in 
\Cref{eq:repf} or 
\Cref{eq:frepnew}. 
\item Set $N=2n$. 
\item Use the algorithm for comrade matrices, described in \cite{roots}, to 
compute the roots of $u_N(t)$ as $\tilde{t}_1$, $\tilde{t}_2$,
\ldots, $\tilde{t}_N$, and the roots of
$v_N(x)$ as $s_1$, $s_2$, \ldots, $s_N$.
\item  
Rescale $s_j$ by $\frac{s_j}{2}$, 
for $j=1$, $2$, \ldots, $N$.
\item Obtain the non-integer powers 
$t_j=(b-a)\tilde{t}_j+a$, and the collocation points 
$x_j=e^{-\frac{s_j}{b-a}}$, for $j=1$, $2$, \ldots, $N$.
\item Use the non-integer powers
and the collocation points to
construct $V \in \mathbb{R}^{N \times N}$ as defined in \Cref{eq:V}
and $F \in \mathbb{R}^N$ as defined in \Cref{eq:F}.  
\item Solve the linear system $Vc=F$ for the coefficient vector $\hat{c}$,
using a TSVD solver with truncation point $\epsilon = \alpha_n$.
\item Construct the approximation $\hat{f}_N(x)=\sum_{j=1}^N \hat{c}_jx^{t_j}$. 
%%%
%%%\item Compute the singular values, $\alpha_0$, $\alpha_1$, \ldots,
%%%$\alpha_n$, of $T_{\gamma}$, where $n=\max\{i:\alpha_i \geq \epsilon\}$.
%%%\item Compute the roots of $u_N(t)$, $\tilde{t}_1$, $\tilde{t}_2$,
%%%\ldots, $\tilde{t}_N$, and the roots of
%%%$v_N(x)$, $s_1$, $s_2$, \ldots, $s_N$.
%%%\item Obtain the non-integer powers 
%%%$t_j=(b-a)\tilde{t}_j+a$, and the collocation points 
%%%$x_j=e^{-\frac{s_j}{b-a}}$, for $j=1$, $2$, \ldots, $n$.
%%%\item Use the non-integer powers
%%%and the collocation points to
%%%construct $V \in \mathbb{R}^{n \times n}$ as defined in \Cref{eq:V}
%%%and $F \in \mathbb{R}^n$ as defined in \Cref{eq:F}.  
%%%\item Solve the linear system $Vc=F$ for the coefficient vector $\hat{c}$,
%%%using a TSVD solver with the truncation level $\epsilon$.
%%%\item Construct the approximation $\hat{f}_N(x)=\sum_{j=1}^n \hat{c}_jx^{t_j}$. 
%%%
\end{enumerate}

% \subsection{A Practical Numerical Algorithm}
% \label{sec:PNA}
In the previous section, we prove that, given 
any two $N$-point quadrature rules for which
$E_1 \leq \alpha^2_n$ and $E_2 \leq \frac{\alpha^2_n}{2{n}}$,
where $E_1$ and $E_2$ are defined in \Cref{eq:E1} and
\Cref{eq:E2}, respectively, one can numerically approximate 
$f$ by $\hat{f}_N$ uniformly
to precision $\alpha_n |\sigma|$. We can construct such quadrature rules by
selecting the roots of of $u_N(t)$ and $v_N(x)$ for $N \approx 2n$,
as shown in \Cref{sec:prac}.
However,
experiments in
\Cref{sec:prac} show that,
by taking $N=n$ and using the nodes $x_k$ instead of $x_k/2$, we can 
obtain the same result in practice.
Since the function $f(t)=e^{-x (t+\frac{1}{\gamma-1})}u_i(t)$ can be integrated 
to precision $\alpha^2_n$ using only $N=n$ points, and 
the interpolation matrix $A \in \mathbb{R}^{N \times n}$
defined in \Cref{eq:A} is well conditioned also for $N=n$,
we can achieve the same uniform approximation error of $\hat{f}_N$
to $f$
as described in \Cref{eq:interperr} in \Cref{thm:globalerr},
for $N=n$.

Previously, we assumed $\epsilon \approx \alpha_n$. When $N=n$, we instead
choose $\epsilon$ as follows.
First, we observe $\norm{V^{-1}}_2 \leq \frac{1}{\alpha_n}$,
as shown in \Cref{fig:Vnorm}.
Letting 
$\tilde{\alpha}_n$ denote the $n$-th singular value of $V$
and assuming that $\delta V$ satisfies 
$\norm{\delta V}_2 \leq \frac{\tilde{\alpha}_n}{2}$, we have
\begin{align}
\norm{(V+\delta V)^{-1}}_2 \leq \frac{1}{\tilde{\alpha}_n-\norm{\delta V}_2}
\leq \frac{2}{\tilde{\alpha}_n}=2\norm{V^{-1}}_2.
\end{align}
Thus, $\norm{(V+\delta V)^{-1}}_2 \lesssim \frac{1}{\alpha_n}$,
which is equivalent to
$\frac{1}{\hat{\alpha}_n} \lesssim \frac{1}{\alpha_n}$.
We have then that
$\frac{\alpha_n}{\hat{\alpha}_k} \lesssim 1$, and therefore, as long as $\epsilon$ is not larger 
than $\alpha_n$, the resulting approximation error is bounded by 
\begin{align}
\label{eq:emperr2}
\norm{f-\hat{f}_N}_{L^{\infty}[0,1]} &\lesssim (\epsilon+\alpha_n)
\cdot \abs{\sigma}+
\epsilon_0 \mu_2 \notag \\
&\lesssim \alpha_n\abs{\sigma}+
\epsilon_0 \mu_2.
\end{align}
In practice, we take $\epsilon=\epsilon_0$.

Therefore, we implement a practical version of the numerical algorithm, 
which closely follows the one
outlined at the beginning of this section,
with the following adjustments:
\begin{itemize}
\item
We replace $N=2n$ with $N=n$ in Step $6$.
\item
When taking $N=n$, we use the 
roots of $v_N(x)$, $s_1$, $s_2$, \ldots, $s_N$, without scaling them by $2$. Thus, we
delete Step $8$. 
\item
We replace $\epsilon=\alpha_n$ with $\epsilon = \epsilon_0$
in Step $11$.
\end{itemize}
The rest of the steps
remain the same.
%%%and replace $\epsilon=\alpha_n$ by $\epsilon = \epsilon_0$, as described
%%%in Step $8$ in \Cref{sec:sum}. Additionally, when taking $N=n$, we use the 
%%%roots of $v_N(x)$, $s_1$, $s_2$, \ldots, $s_N$, without scaling them by $2$. Thus, we
%%%delete Step $5$ in \Cref{sec:sum}. The rest of the steps
%%%remains the same.

\begin{figure}[!ht]
\centering
\subfloat[$\gamma=10$: $a=1$, $b=10$]{%
\includegraphics[scale=0.47]{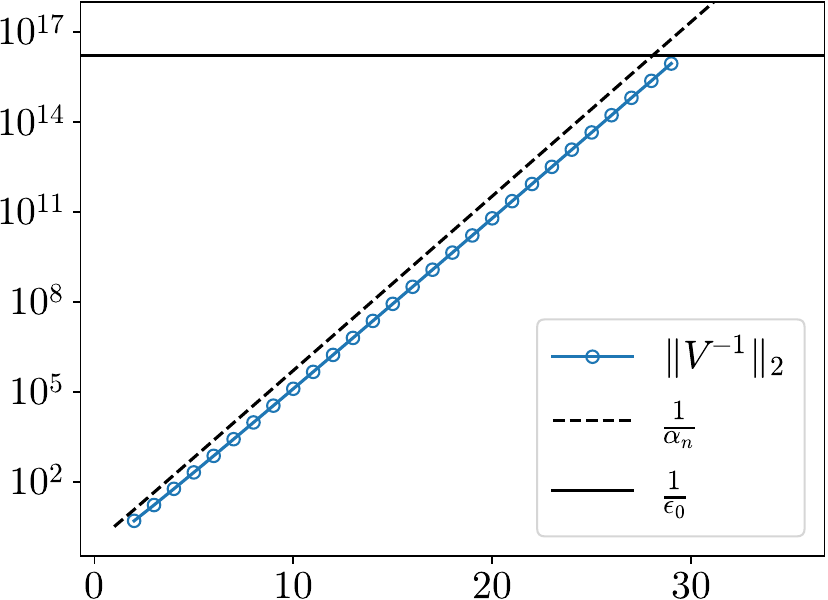}
}    
% \hfill

\subfloat[$\gamma=50$: $a=1$, $b=50$]{%
\includegraphics[scale=0.47]{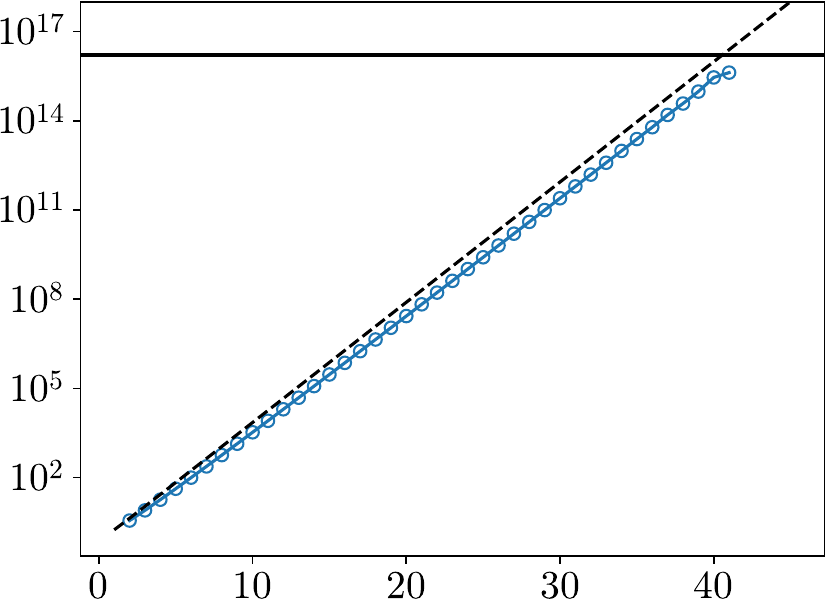}
}
% \hfill

\subfloat[$\gamma=250$: $a=1$, $b=250$]{%
\includegraphics[scale=0.47]{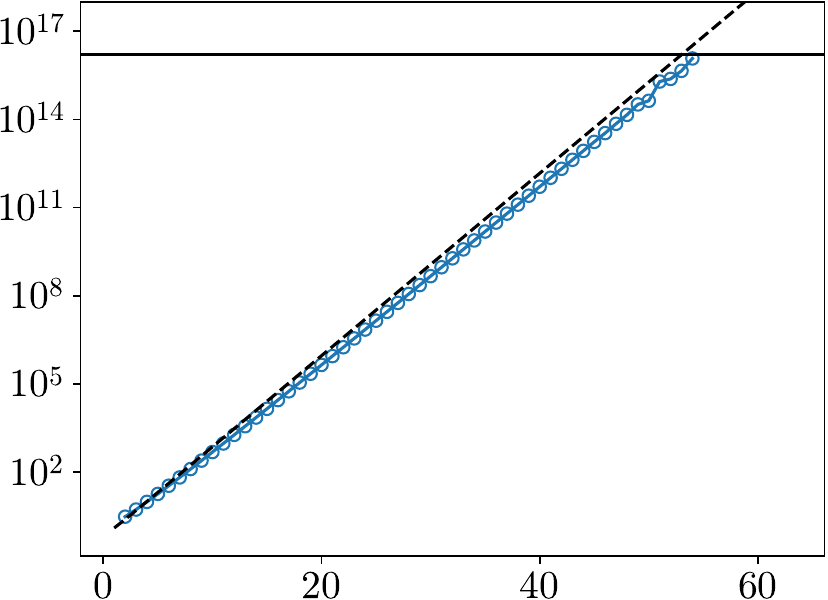}
}
\caption{A comparison between $\norm{V^{-1}}_2$ 
and $\frac{1}{\alpha_n}$, as a function of $n$,
for $\gamma=10$, $50$, $250$.}
  \label{fig:Vnorm}
\end{figure}

\section{Numerical Experiments}
\label{sec:NE}

In this section, we demonstrate the performance of our algorithm with several
numerical experiments. 
Our algorithm was implemented in Fortran 77, and compiled using the GFortran 
Compiler, version 12.2.0, with -O3 flag. All experiments were conducted on a 
laptop with 32 GB of RAM and an Intel $12$nd Gen Core i7-1270P CPU.
A demo of our approximation scheme is provided in  
\url{https://doi.org/10.5281/zenodo.8323315}.

All the experiments presented in this section are conducted using 
the practical version of the numerical algorithm described in 
\Cref{sec:sum}.

\subsection{Approximation Over Varying Values of $n$}
In this subsection, we approximate functions of the form
$f(x)=\int_a^b x^{\mu} \sigma(\mu) \, d\mu$, $x \in [0,1]$,
for the following cases of $\sigma(\mu)$:
%%%\begin{align}
%%%\sigma_1(\mu)&=1, \label{eq:sigma1}\\
%%%\sigma_2(\mu)&=\frac{1}{\mu}, \\
%%%\sigma_3(\mu)&=\sin(12\mu), \\
%%%\sigma_4(\mu)&={\sin(12\mu)}^2, \\
%%%\sigma_5(\mu)&=e^{-10 \mu}, \label{eq:sigma5}\\
%%%\sigma_6(\mu)&=\frac{e^{-10 \mu}}{\mu}.
%%%\end{align}
\begin{align}
\sigma_1(\mu)&=\frac{1}{\mu}, \label{eq:sigma1}\\
\sigma_2(\mu)&=\sin(12\mu), \label{eq:sigma2}\\
\sigma_3(\mu)&=e^{-10 \mu}, \label{eq:sigma3}\\
\sigma_4(\mu)&=\mu \sin(\mu)\label{eq:sigma4}.
\end{align}

%%%We apply our algorithm with $N=n$ and
%%%$\epsilon=\epsilon_0$,
%%%where $\epsilon$ is the truncation point of the TSVD, as 
%%%described in \Cref{thm:globalerr}.
We estimate 
$\|f-\hat{f}_N\|_{L^{\infty}[0,1]}$ by 
evaluating $f$ and $\hat{f}_N$ at $2000$ uniformly distributed points over $[0,1]$,
and finding
the maximum error between $f$ and  
$\hat{f}_N$ at those points.
We repeat the experiments for
$\gamma=10$, $50$, $250$, and plot
${\|f-\hat{f}_N\|_{L^{\infty}[0,1]}}/{|\sigma|}$.
The results are displayed
in \Cref{fig:exp1,fig:exp2,fig:exp3}.

% \begin{remark}
% Since 
% Throughout the section, we assume
% \end{remark}

It is evident that
${\|f-\hat{f}_N\|_{L^{\infty}[0,1]}}/{|\sigma|}$ remains 
bounded by $\alpha_n$, as shown in \Cref{sec:sum},
until it reaches a stabilized level that is close to machine 
precision multiplied by some small constant.
Since $\{\alpha_i\}_{i=0, 1, \ldots , \infty}$ decays exponentially, the approximation 
exhibits an exponential rate of convergence in $N$.

\begin{figure}[!ht]
\centering
\subfloat[$f(x)=\int_1^{10} x^{\mu}\sigma_1(\mu) \,d\mu$]{%
\includegraphics[scale=0.47]{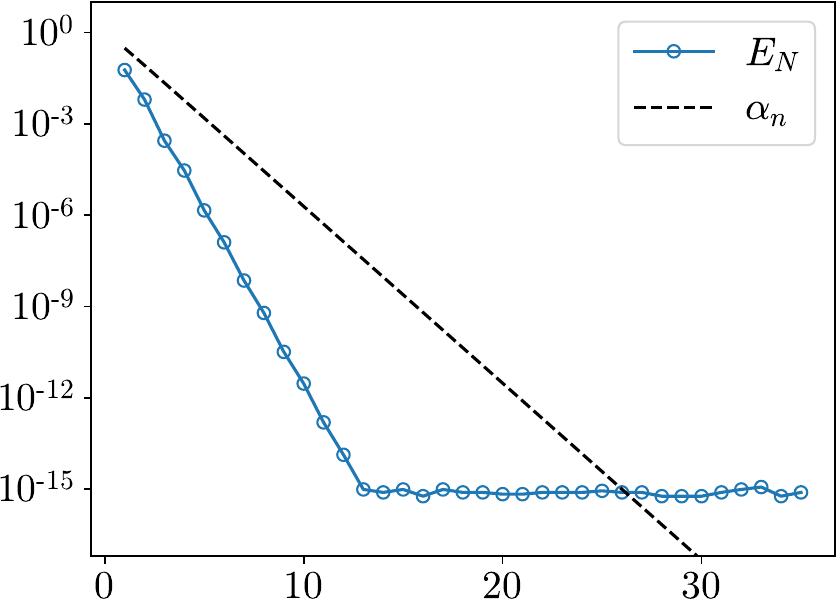}
}    
% \hfill
% \hspace{1mm}
\subfloat[$f(x)=\int_1^{10} x^{\mu}\sigma_2(\mu) \,d\mu$]{%
\includegraphics[scale=0.47]{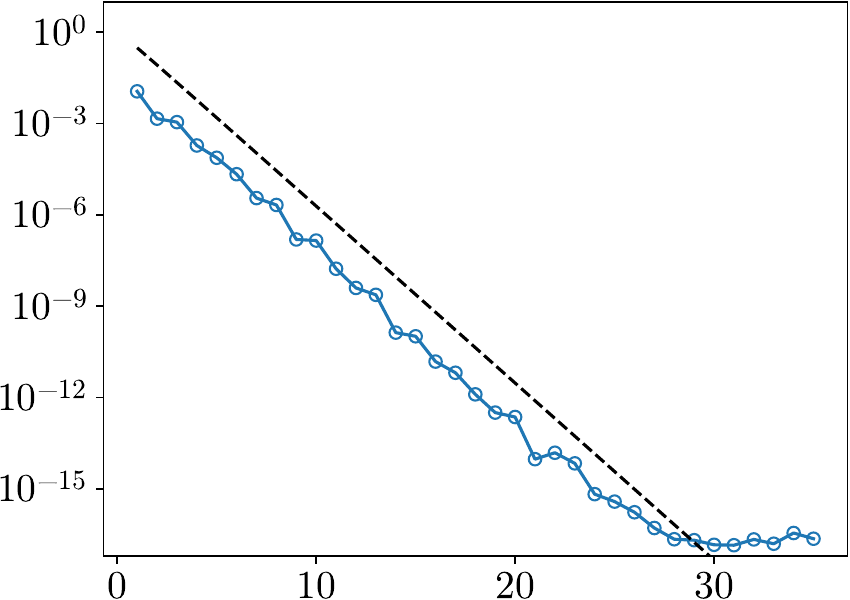}
}
\hfill
\subfloat[$f(x)=\int_1^{10} x^{\mu}\sigma_3(\mu) \,d\mu$]{%
\includegraphics[scale=0.47]{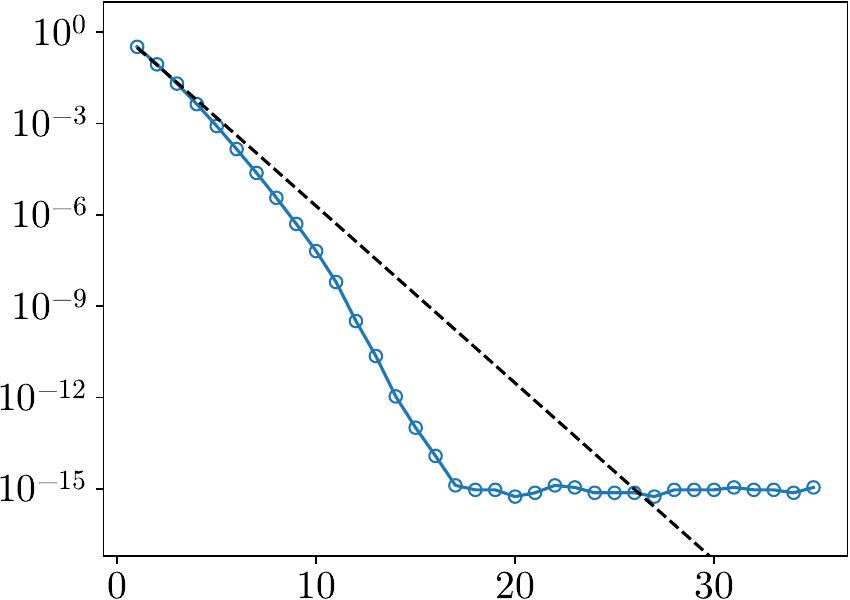}
}    
\subfloat[$f(x)=\int_1^{10} x^{\mu}\sigma_4(\mu) \,d\mu$]{%
\includegraphics[scale=0.47]{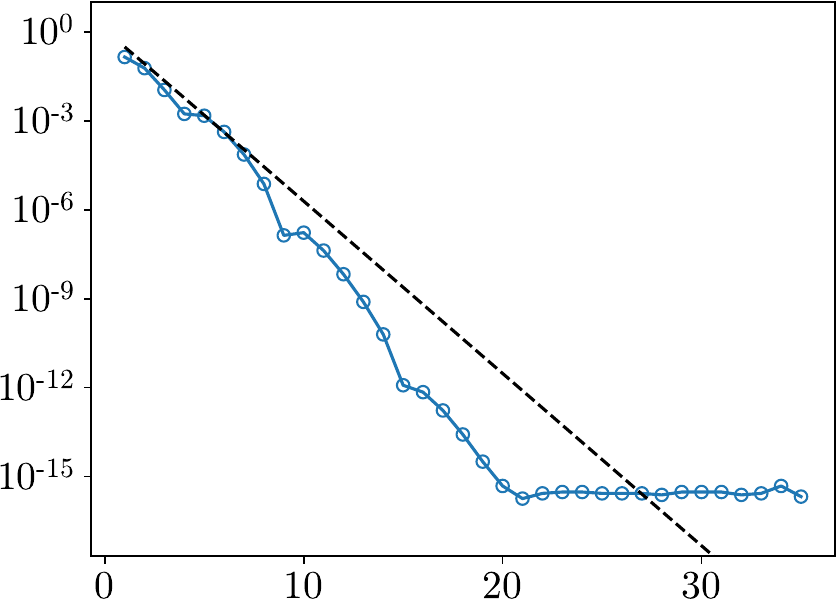}
}    
\caption{The $L^{\infty}$ approximation error over $[0,1]$,
$E_N:=\frac{\|f-\hat{f}_N\|_{L^{\infty}[0,1]}}{|\sigma|}$,
as a function of $n$, for $\gamma=10$.
}
  \label{fig:exp1}
\end{figure}
\begin{figure}[!ht]
\centering
\subfloat[$f(x)=\int_1^{50} x^{\mu}\sigma_1(\mu) \,d\mu$]{%
\includegraphics[scale=0.47]{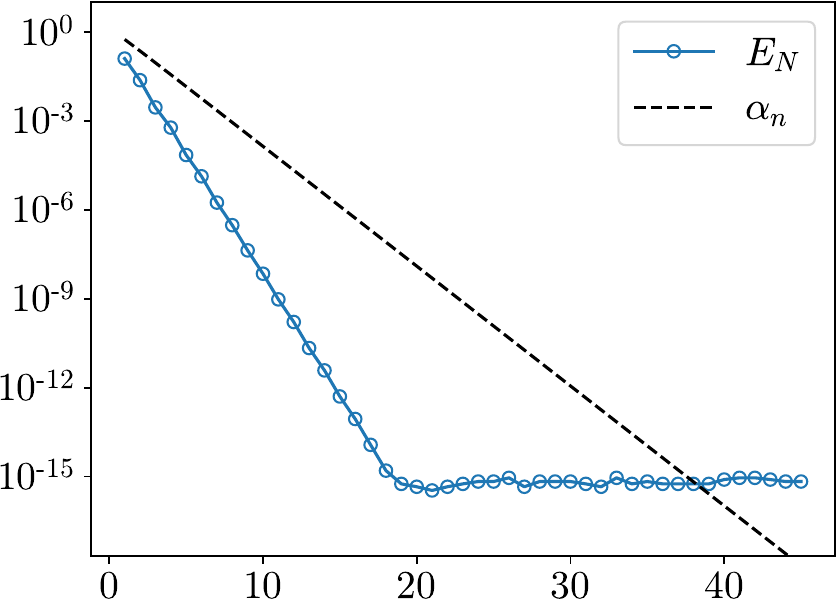}
}    
% \hfill
% \hspace{1mm}
\subfloat[$f(x)=\int_1^{50} x^{\mu}\sigma_2(\mu) \,d\mu$]{%
\includegraphics[scale=0.47]{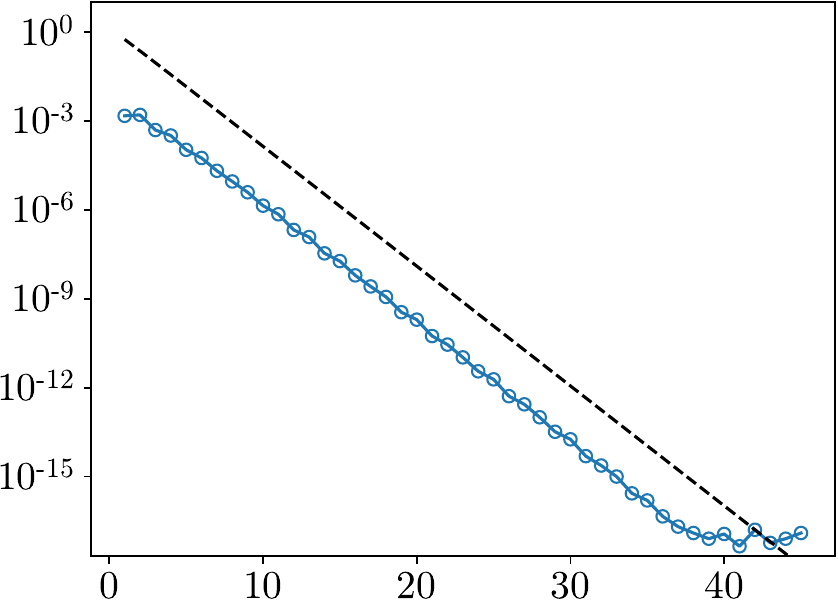}
}
\hfill
\subfloat[$f(x)=\int_1^{50} x^{\mu}\sigma_3(\mu) \,d\mu$]{%
\includegraphics[scale=0.47]{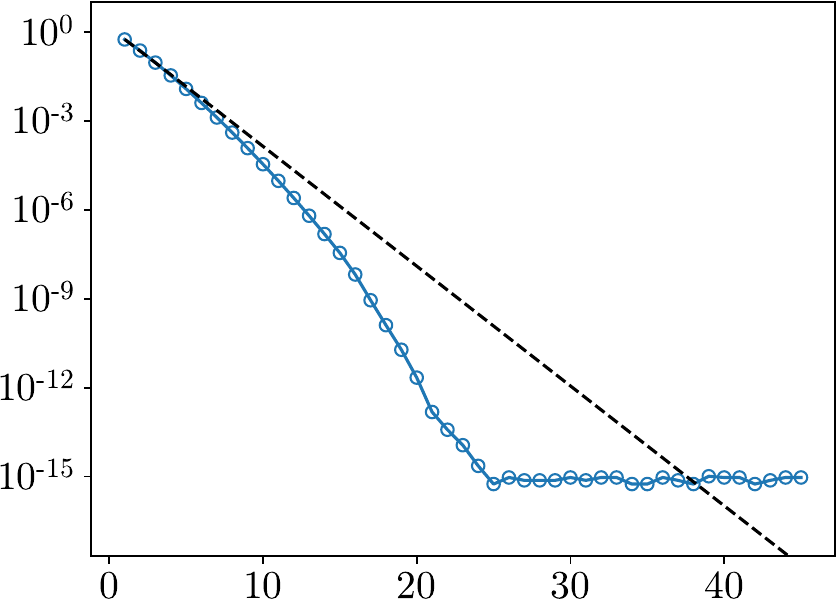}
}    
\subfloat[$f(x)=\int_1^{50} x^{\mu}\sigma_4(\mu) \,d\mu$]{%
\includegraphics[scale=0.47]{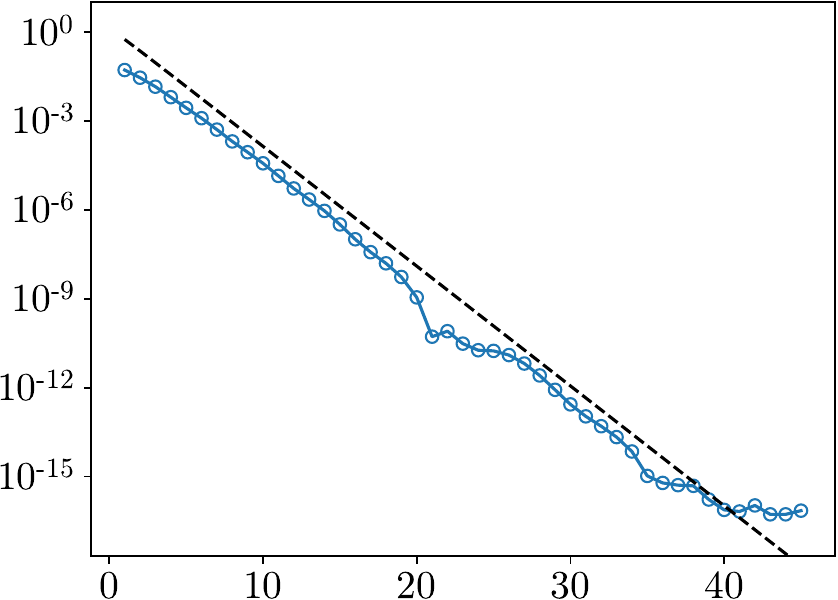}
}    
\caption{The $L^{\infty}$ approximation error over $[0,1]$,
$E_N:=\frac{\|f-\hat{f}_N\|_{L^{\infty}[0,1]}}{|\sigma|}$,
as a function of $n$, for $\gamma=50$.
}
  \label{fig:exp2}
\end{figure}
\begin{figure}[!ht]
\centering
\subfloat[$f(x)=\int_1^{250} x^{\mu}\sigma_1(\mu) \,d\mu$]{%
\includegraphics[scale=0.47]{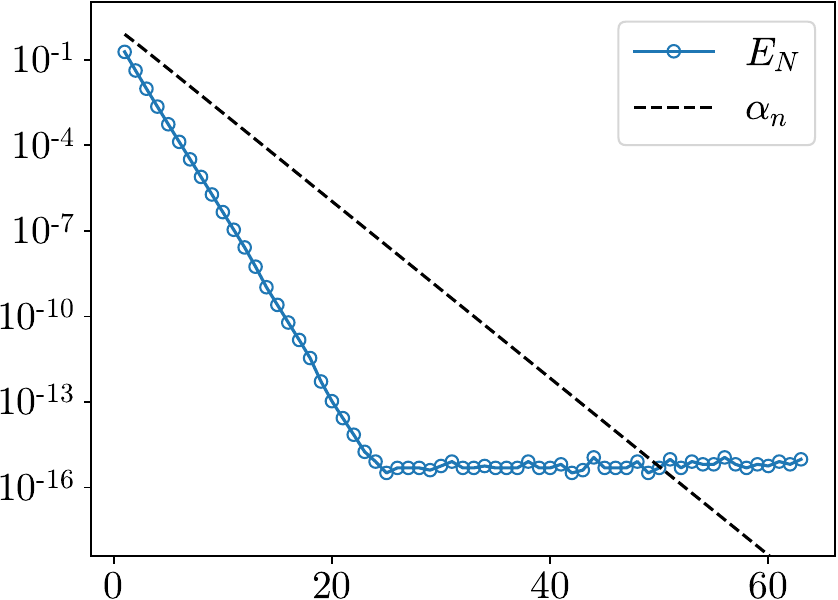}
}    
% \hfill
% \hspace{1mm}
\subfloat[$f(x)=\int_1^{250} x^{\mu}\sigma_2(\mu) \,d\mu$]{%
\includegraphics[scale=0.47]{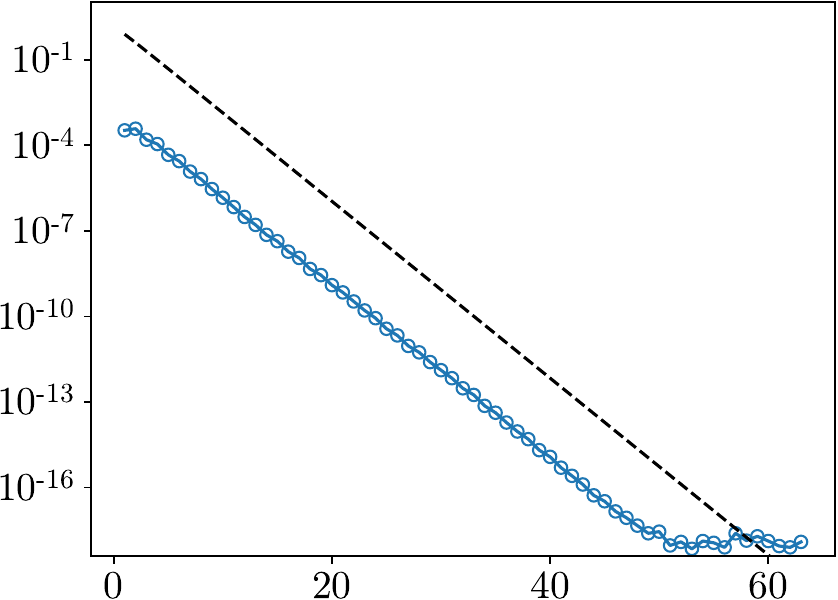}
}
\hfill
\subfloat[$f(x)=\int_1^{250} x^{\mu}\sigma_3(\mu) \,d\mu$]{%
\includegraphics[scale=0.47]{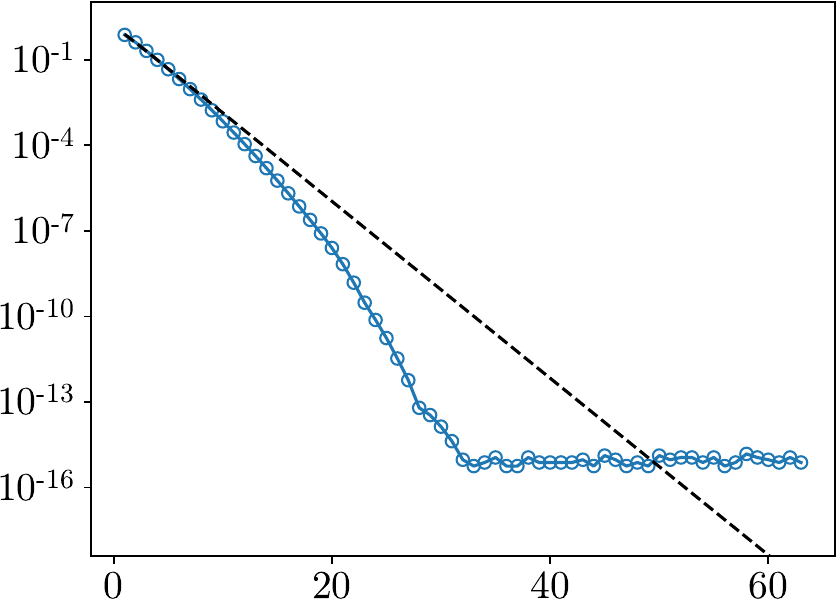}
}    
\subfloat[$f(x)=\int_1^{250} x^{\mu}\sigma_4(\mu) \,d\mu$]{%
\includegraphics[scale=0.47]{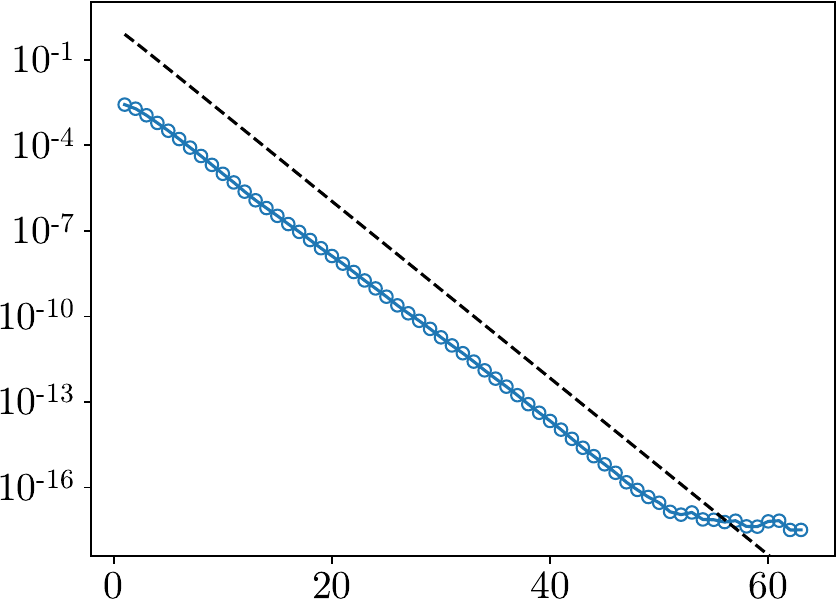}
}    
\caption{The $L^{\infty}$ approximation error over $[0,1]$,
$E_N:=\frac{\|f-\hat{f}_N\|_{L^{\infty}[0,1]}}{|\sigma|}$,
as a function of $n$, for $\gamma=250$.
}
  \label{fig:exp3}
\end{figure}

\subsection{Approximation of Non-integer Powers}
\label{sec:nonint}

In this subsection, 
our goal is to approximate functions 
of the form 
$f(x)=\int_a^b x^{\mu} \sigma(\mu) \, d\mu$, $x \in [0,1]$,
with 
\begin{align}
\sigma_5(\mu)= \delta(\mu-c),
\end{align}
where 
$c \in [a,b]$.
The resulting function is $f(x)=x^c$.

We
fix $N=n$, where  
$\alpha_n \approx \epsilon_0$, and
approximate such functions for 
$1000$ values of $c$ 
distributed logarithmically in the interval $[\frac{a}{1.5},1.5 b]$.
%%%We set 
%%%$\epsilon=\epsilon_0$, and
We evaluate $f$ and $\hat{f}_N$ at $1000$ uniformly distributed
points over $[0,1]$ to estimate  
${\|f-\hat{f}_N\|_{L^{\infty}[0,1]}}/{\abs{\sigma}}$.
The results
for $\gamma=10, 50, 250$ are displayed in \Cref{fig:mu}.
It can be observed that the approximation error remains 
accurate up to 
machine precision multiplied by some small constants,
for values of $c$
within the interval $[a,b]$, and grows 
significantly,
for values of $c$
outside $[a,b]$.

We further investigate the approximation error over varying values of $n$,
for $c=a$,
$\frac{a+b}{2}$, $b$ and $\gamma=10$, $50$, $250$, as shown 
in \Cref{fig:df0}. The approximation error is bounded by $\alpha_n$ multiplied
by some small constants, until it stabilizes at a level 
around machine precision.

\begin{figure}[!ht]
\centering
\subfloat[$n=28$, $\gamma=10$: $a=1$, $b=10$]{%
\includegraphics[scale=0.47]{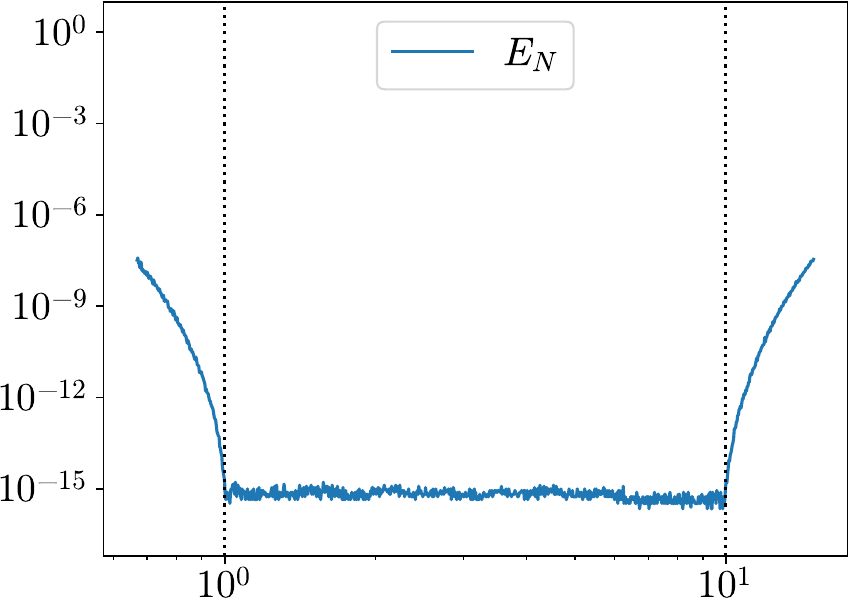}
}    
% \hfill

\subfloat[$n=38$, $\gamma=50$: $a=1$, $b=50$]{%
\includegraphics[scale=0.47]{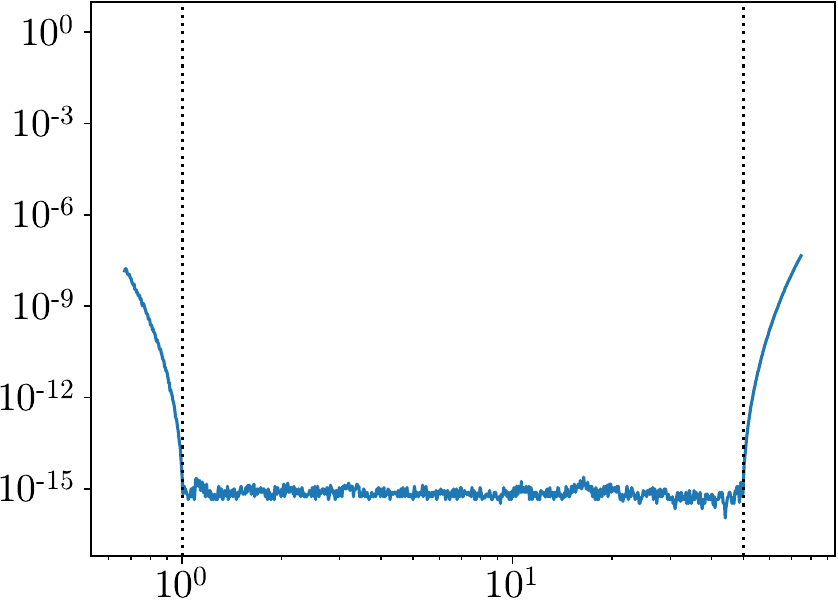}
}
% \hfill

\subfloat[$n=50$, $\gamma=250$: $a=1$, $b=250$]{%
\includegraphics[scale=0.47]{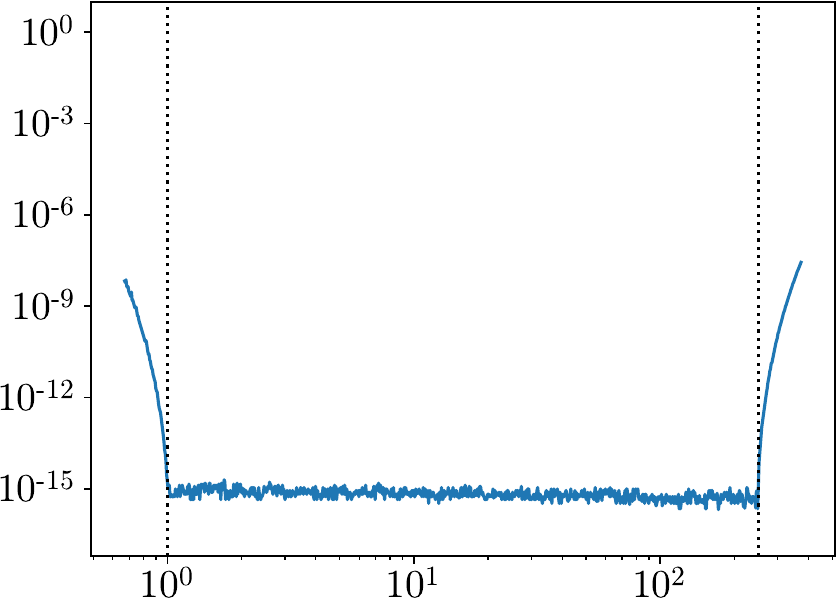}
}
\caption{The $L^{\infty}$ approximation error
of
$f(x)=\int_a^{b} x^{\mu} \sigma_5(\mu) \, d\mu=x^c$ over $[0,1]$,
$E_N:=\frac{\|f-\hat{f}_N\|_{L^{\infty}[0,1]}}{|\sigma|}$,
as a function of $c$, for a fixed $n$ such
that $\alpha_n\approx \epsilon_0$,
and for $\gamma=10$, $50$, $250$.}
  \label{fig:mu}
\end{figure}

\begin{figure}[!ht]
\centering
\subfloat[$\gamma=10$: $a=1$, $b=10$]{%
\includegraphics[scale=0.47]{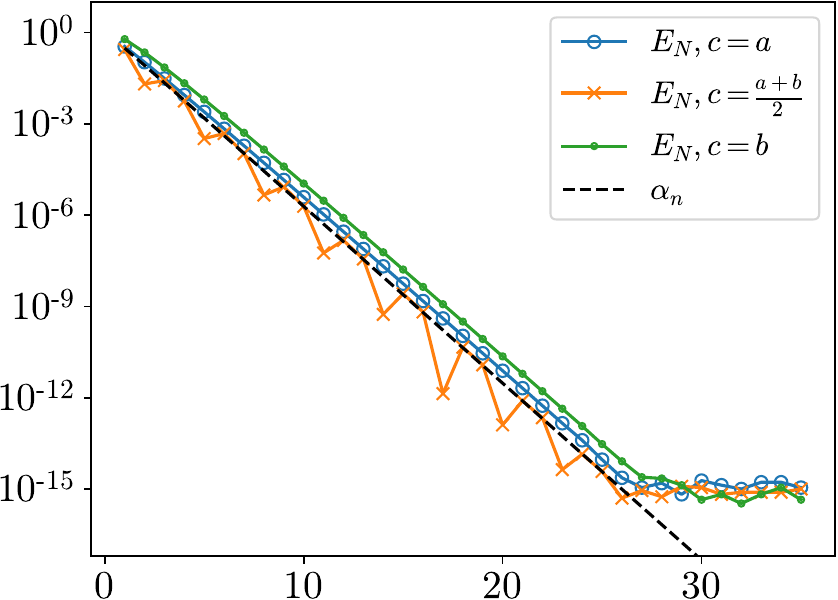}
}    
% \hfill

\subfloat[$\gamma=50$: $a=1$, $b=50$]{%
\includegraphics[scale=0.47]{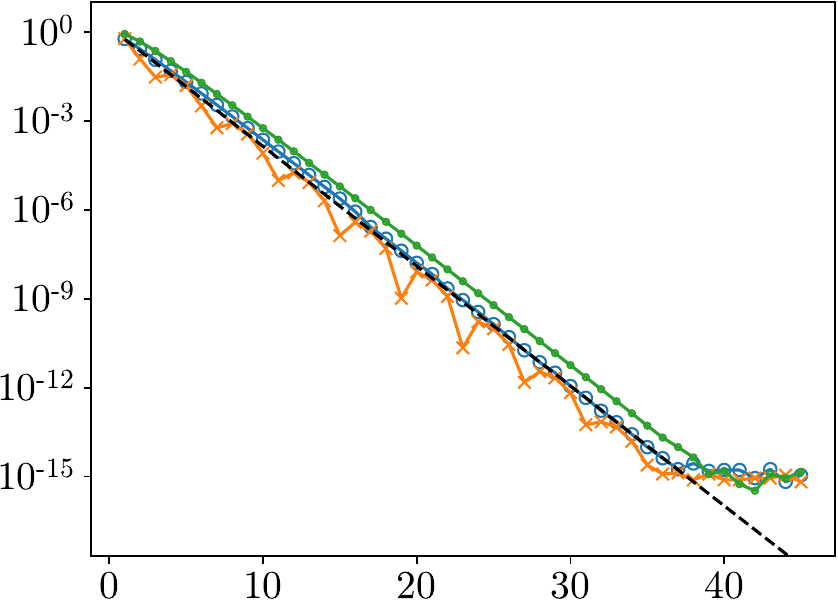}
}
% \hfill

\subfloat[$\gamma=250$: $a=1$, $b=250$]{%
\includegraphics[scale=0.47]{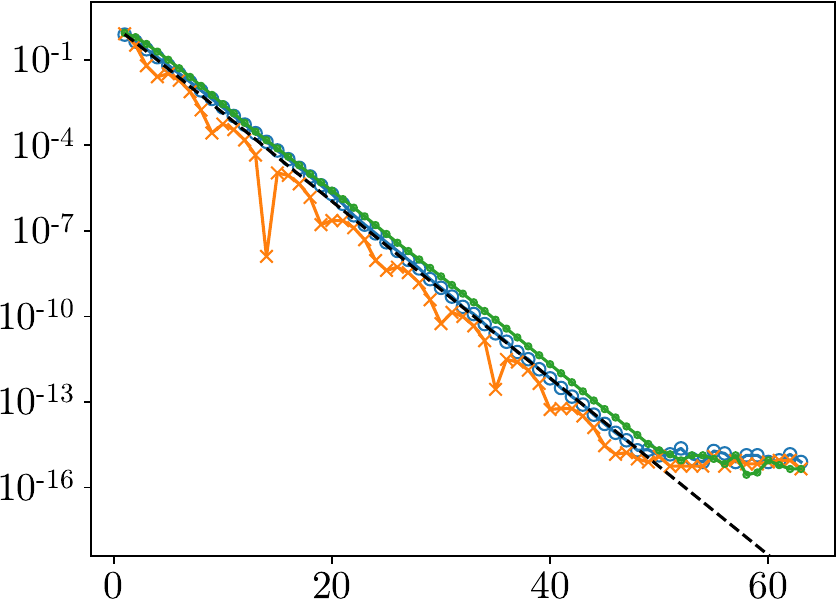}
}
\caption{The $L^{\infty}$ approximation error
of
$f(x)=\int_a^{b} x^{\mu} \sigma_5(\mu) \, d\mu=x^c$ over $[0,1]$,
$E_N:=\frac{\|f-\hat{f}_N\|_{L^{\infty}[0,1]}}{|\sigma|}$,
as a function of $n$,
for $c=a$, $\frac{a+b}{2}$, $b$, and
$\gamma=10$, $50$, $250$.}
\label{fig:df0}
\end{figure}

\subsection{Approximation in the Case of Distributions}
In this subsection, we assume 
$\sigma \in \mathcal{D}'(\R)$ has the form 
\begin{align}
\label{eq:sigmadef}
\sigma_6(\mu)&=(-1)^k \delta^{(k)}(\mu-c),
% \sigma_9(\mu)&= \pv\frac{1}{(\mu-c)^k},
\end{align}
where $k\geq 0$ is an integer,
$c \in [a,b]$, and $\delta(t)$ is the Dirac delta function.
The resulting function is $f(x)=x^c (\log{x})^k$.
%%%We apply our algorithm with $N=n$ and $\epsilon=\epsilon_0$,
We evaluate $f$ and $\hat{f}_N$ at $2000$ uniformly distributed points in 
$[0,1]$ to estimate
${\|f-\hat{f}_N\|_{L^{\infty}[0,1]}}/{\norm{\sigma}_{C^m([a,b])^*}}$.
The results for $k=1$, $2$, \ldots , $6$, $c=a$, $\frac{a+b}{2}$, $b$,
and $\gamma=10$, $50$,
$250$ are shown in \Cref{fig:exp4,fig:exp5,fig:exp6}. 

In contrast to the previous cases 
where $\sigma$ is a signed Radon measure, 
the approximation error can increase significantly with $k$.
However, the approximation error is still bounded by 
$(\epsilon+\alpha_n)\cdot \max_{0\leq i \leq n-1} \bnorm{u_i\bigl{(}\tfrac{t-a}{b-a}
\bigr{)}}_{C^k([a,b])}$, as stated in \Cref{thm:globalerrdist}.
Furthermore,
we observe that the error grows with $k$, and 
when $c=a$, the error is closely aligned with the estimated bound, since
the function is more singular for smaller $c$ and the approximation error tends to be
larger.

\begin{figure}[!ht]
\centering
\subfloat[$k=1$]{%
\includegraphics[scale=0.47]{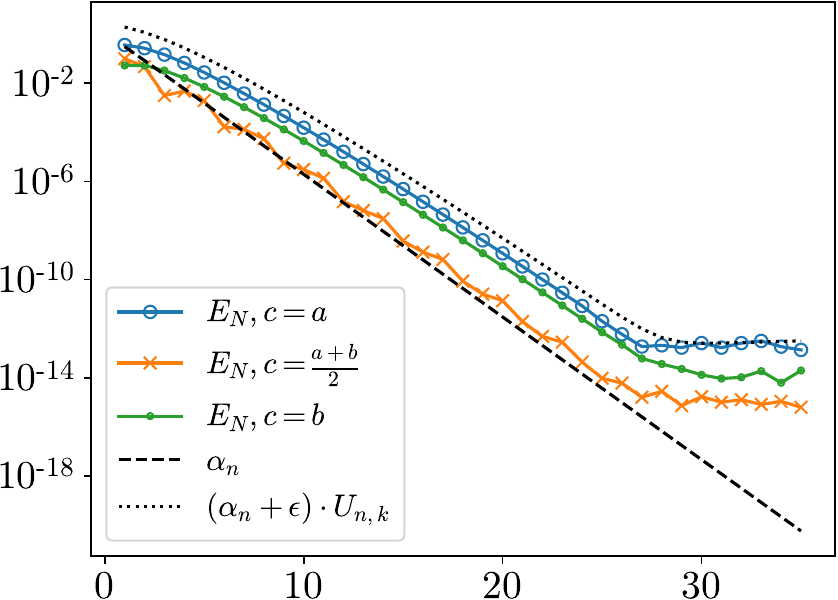}
}    
% \hfill
% \hspace{1mm}
\subfloat[$k=2$]{%
\includegraphics[scale=0.47]{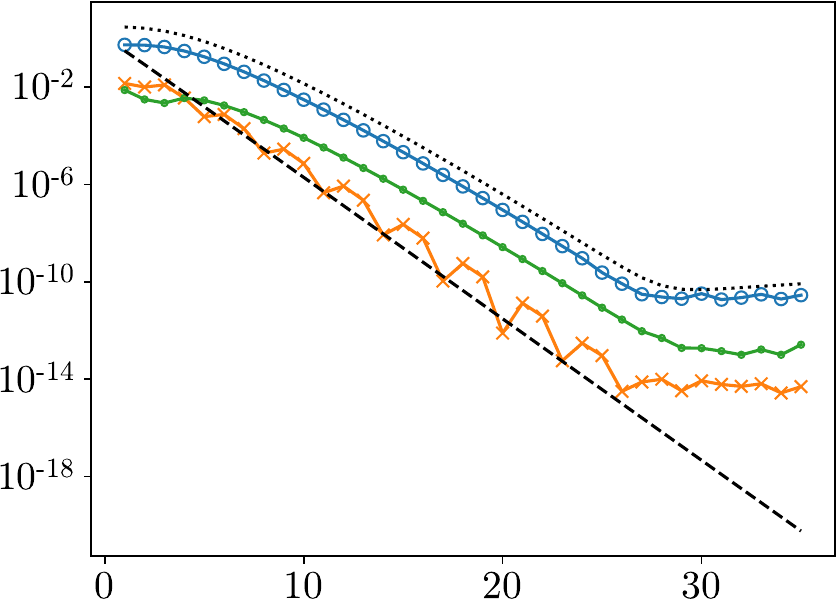}
}
\hfill
\subfloat[$k=3$]{%
\includegraphics[scale=0.47]{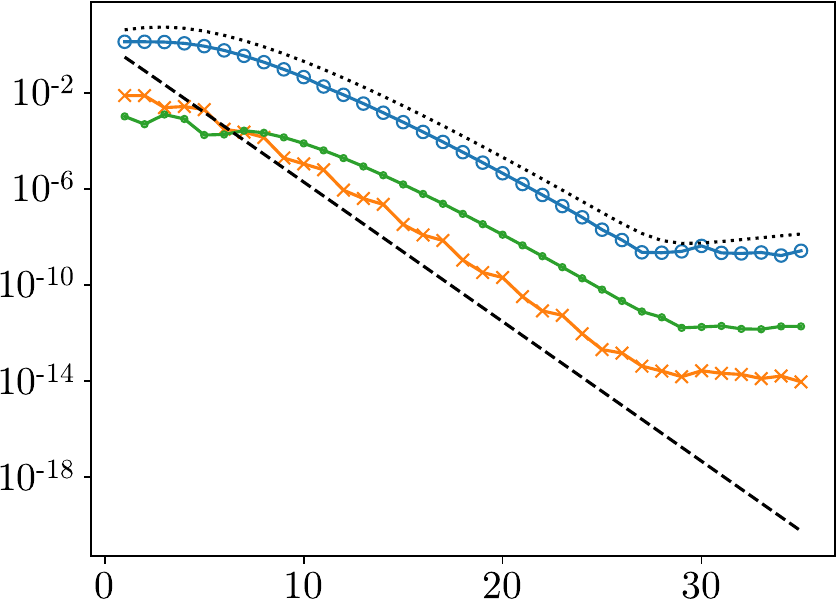}
}
\subfloat[$k=4$]{%
\includegraphics[scale=0.47]{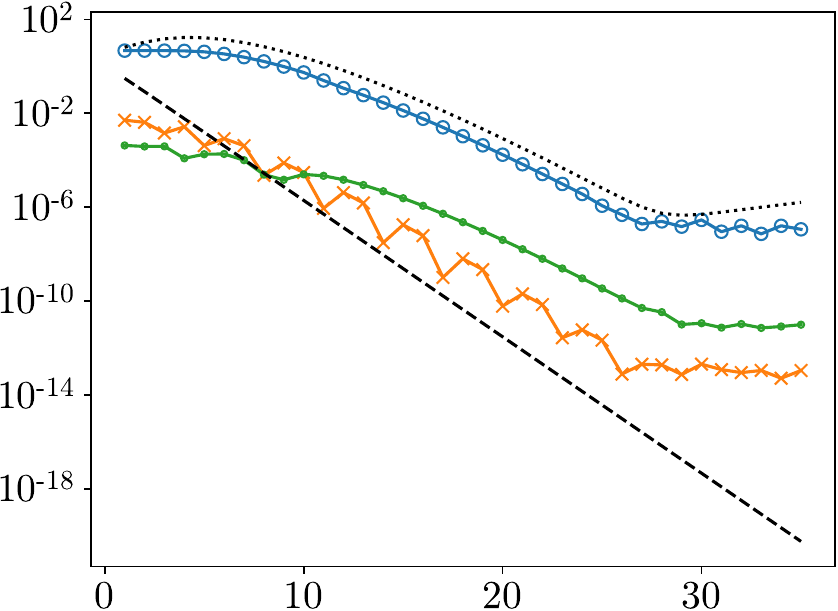}
}
\hfill
\subfloat[$k=5$]{%
\includegraphics[scale=0.47]{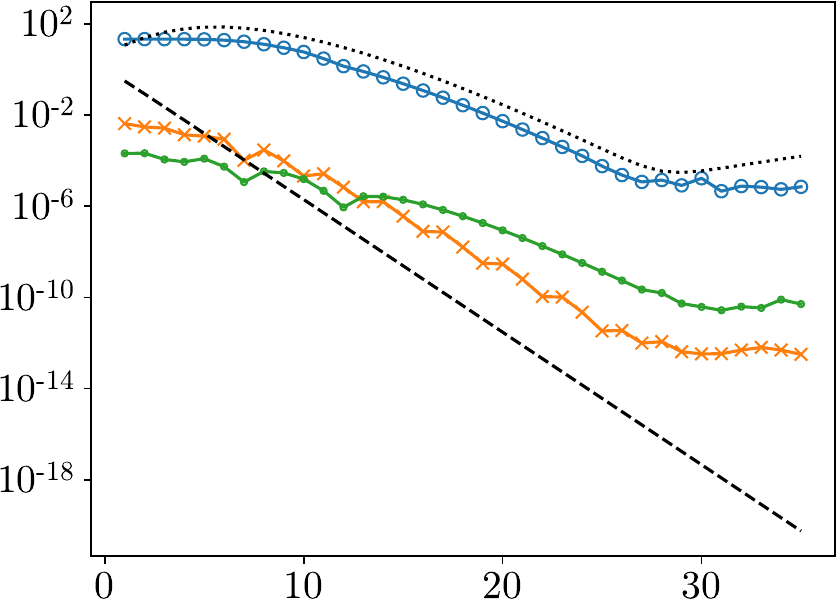}
}
\subfloat[$k=6$]{%
\includegraphics[scale=0.47]{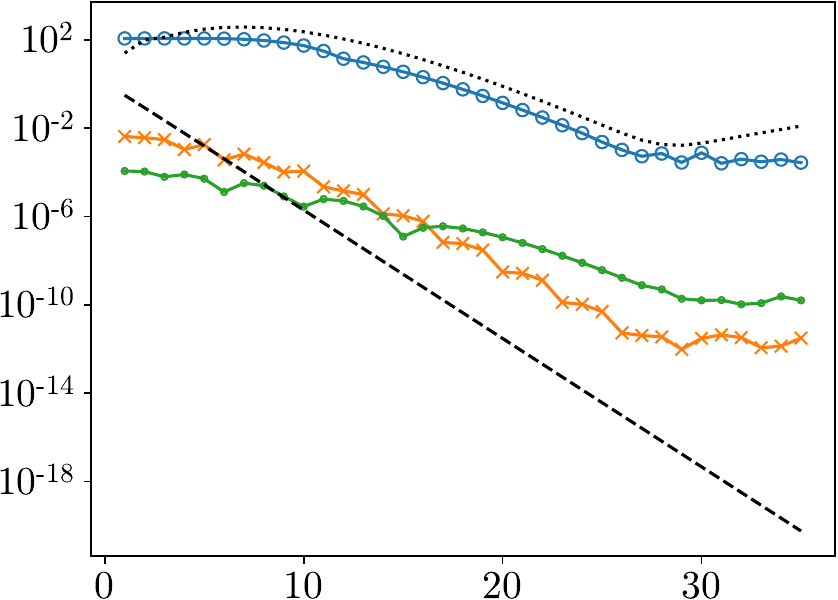}
}
\caption{The $L^{\infty}$ approximation error of 
$f(x)=\int_1^{10} x^{\mu} \sigma_6(\mu) \, d\mu=x^c (\log{x})^k$
over $[0,1]$,
$E_N:=\frac{\|f-\hat{f}_N\|_{L^{\infty}[0,1]}}{\norm{\sigma}_{C^m([a,b])^*}}$,
as a function of $n$, 
for $c=a$, $\frac{a+b}{2}$, $b$, $k=1$, \ldots, $6$, and
$\gamma=10$.
$U_{n,k} :=\max_{0\leq i \leq n-1} \bnorm{u_i\bigl{(}\tfrac{t-a}{b-a}
\bigr{)}}_{C^k([a,b])}$.}
\label{fig:exp4}
\end{figure}

\begin{figure}[!ht]
\centering
\subfloat[$k=1$]{%
\includegraphics[scale=0.47]{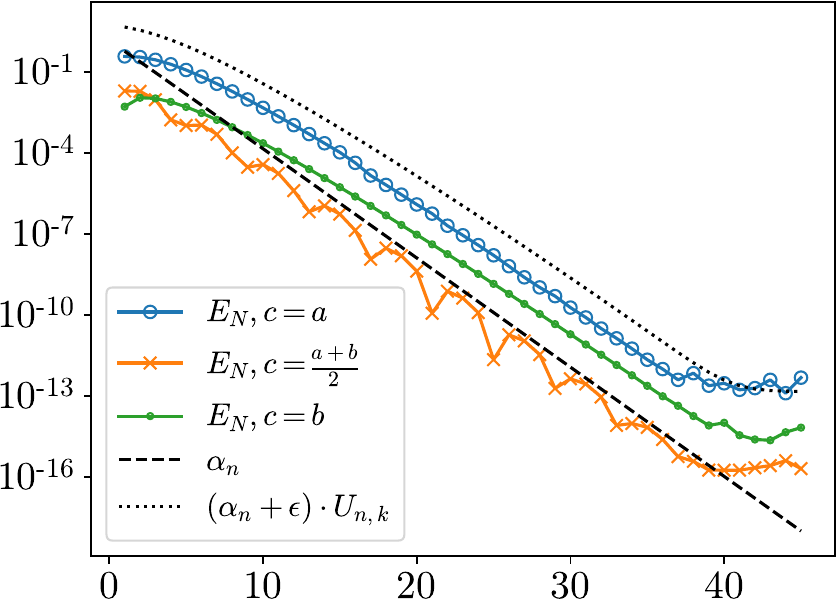}
}    
% \hfill
% \hspace{1mm}
\subfloat[$k=2$]{%
\includegraphics[scale=0.47]{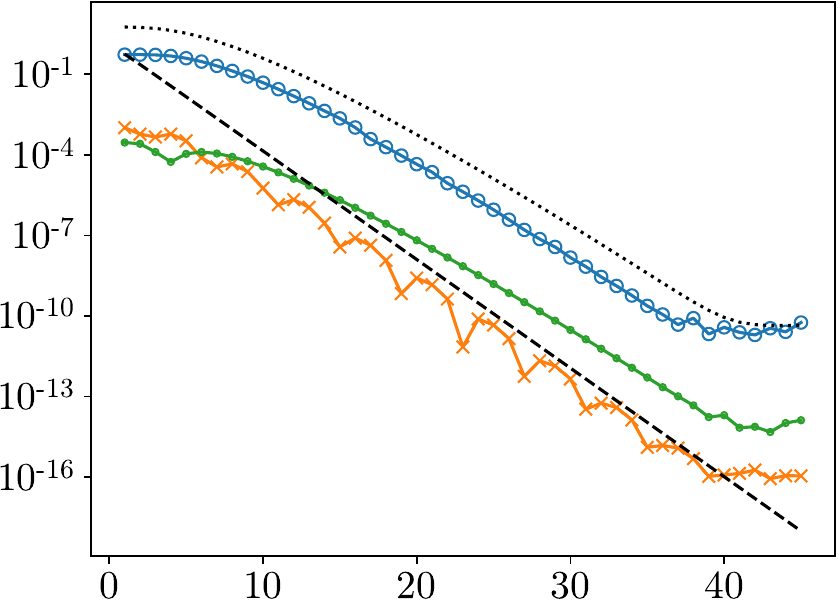}
}
\hfill
\subfloat[$k=3$]{%
\includegraphics[scale=0.47]{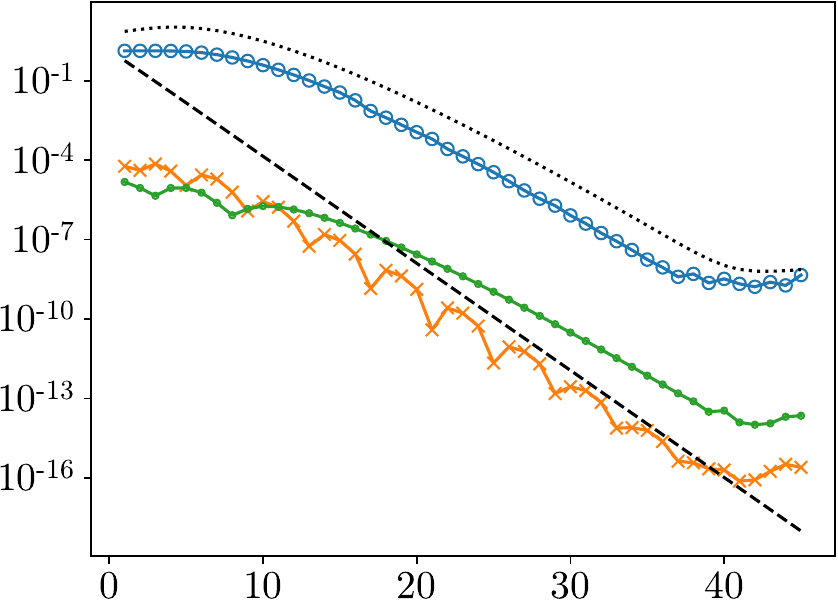}
}
\subfloat[$k=4$]{%
\includegraphics[scale=0.47]{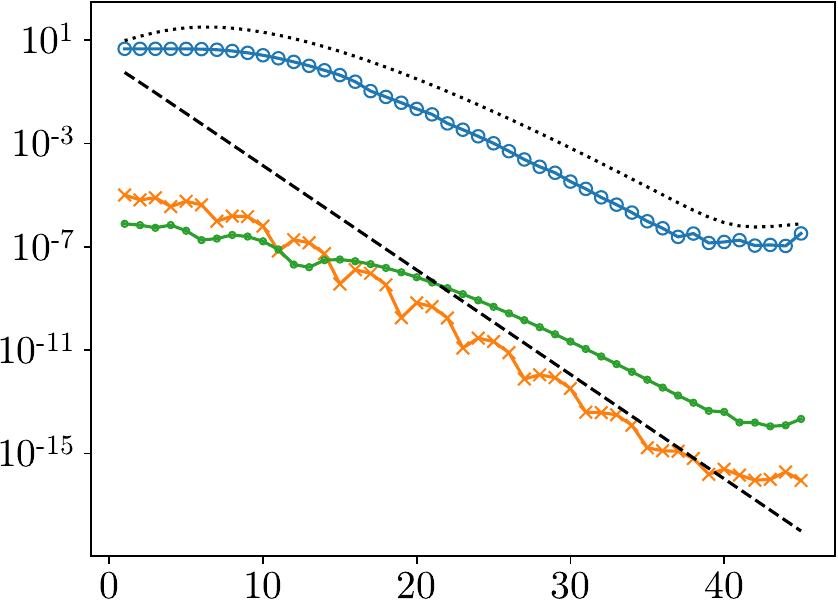}
}
\hfill
\subfloat[$k=5$]{%
\includegraphics[scale=0.47]{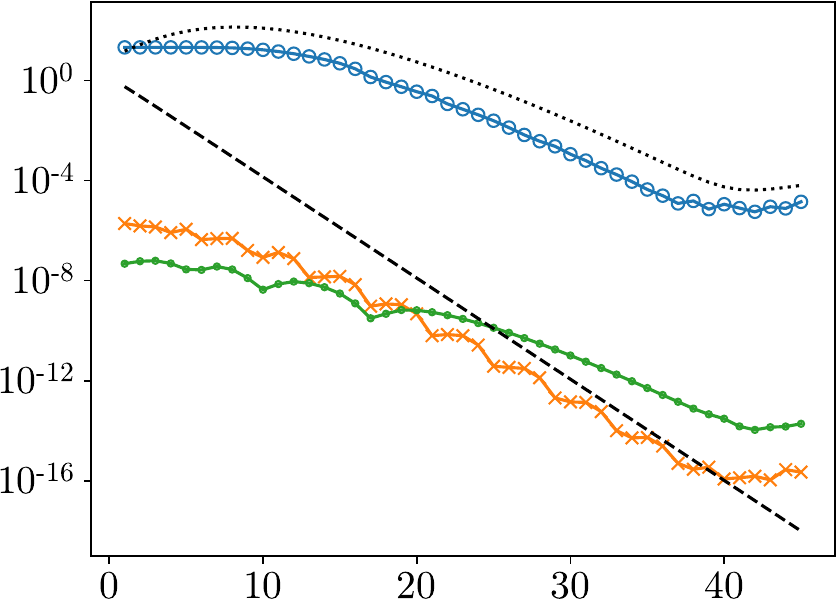}
}
\subfloat[$k=6$]{%
\includegraphics[scale=0.47]{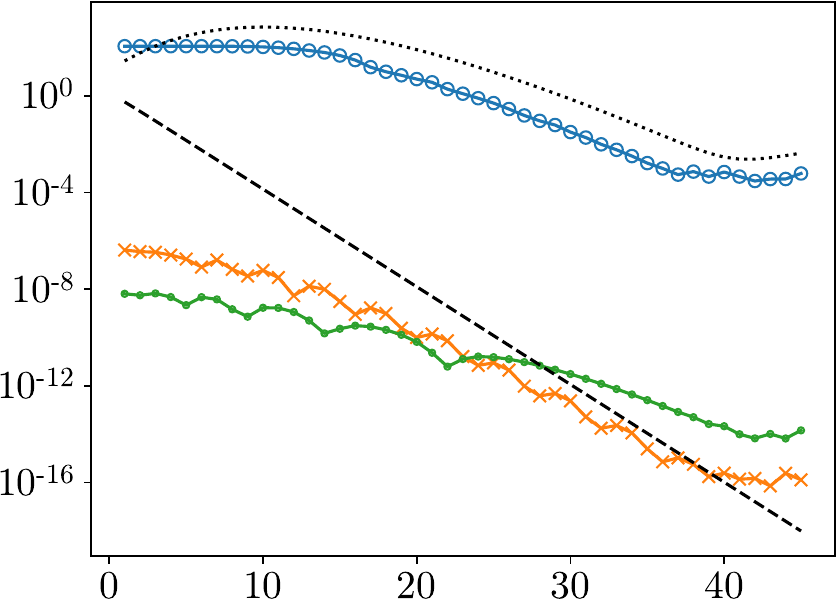}
}
\caption{The $L^{\infty}$ approximation error of 
$f(x)=\int_1^{50} x^{\mu} \sigma_6(\mu) \, d\mu=x^c (\log{x})^k$
over $[0,1]$,
$E_N:=\frac{\|f-\hat{f}_N\|_{L^{\infty}[0,1]}}{\norm{\sigma}_{C^m([a,b])^*}}$,
as a function of $n$, 
for $c=a$, $\frac{a+b}{2}$, $b$, $k=1$, \ldots, $6$, and
$\gamma=50$.
$U_{n,k} :=\max_{0\leq i \leq n-1} \bnorm{u_i\bigl{(}\tfrac{t-a}{b-a}
\bigr{)}}_{C^k([a,b])}$.}
\label{fig:exp5}
\end{figure}

\begin{figure}[!ht]
\centering
\subfloat[$k=1$]{%
\includegraphics[scale=0.47]{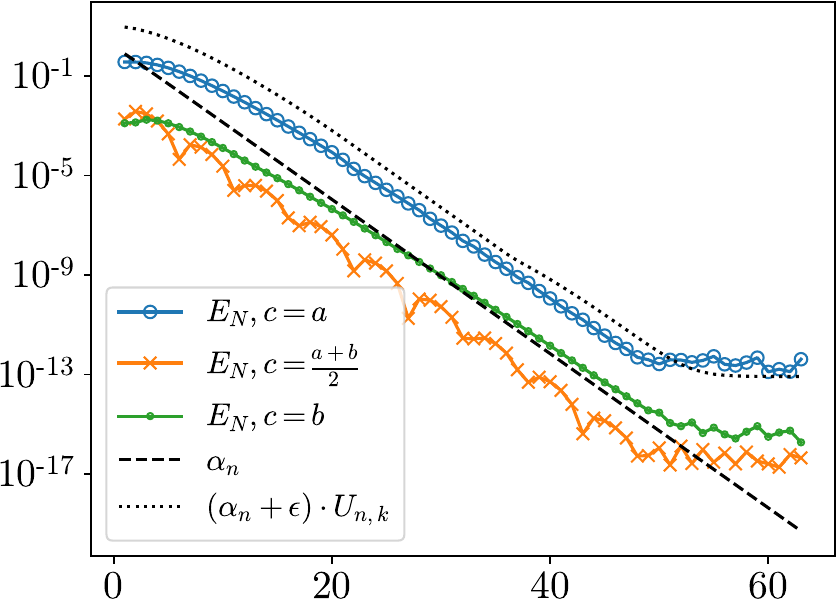}
}    
% \hfill
% \hspace{1mm}
\subfloat[$k=2$]{%
\includegraphics[scale=0.47]{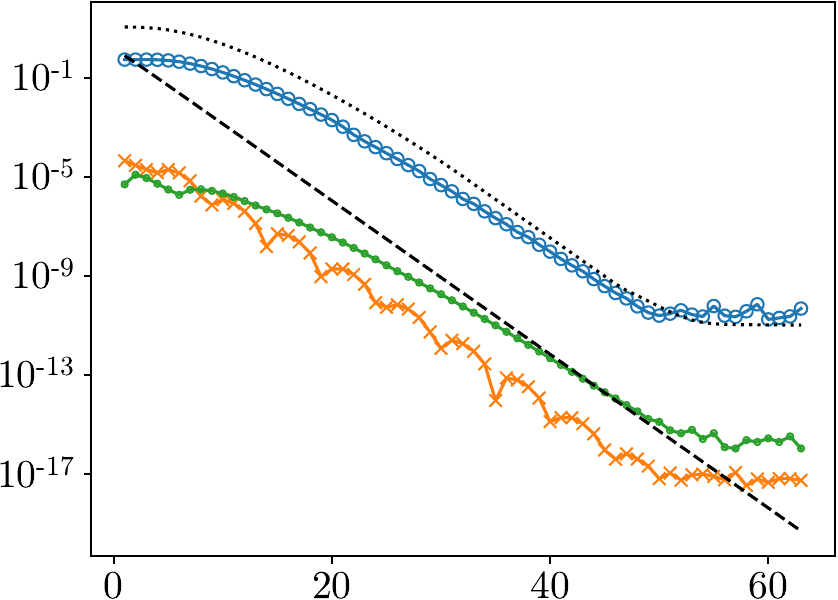}
}
\hfill
\subfloat[$k=3$]{%
\includegraphics[scale=0.47]{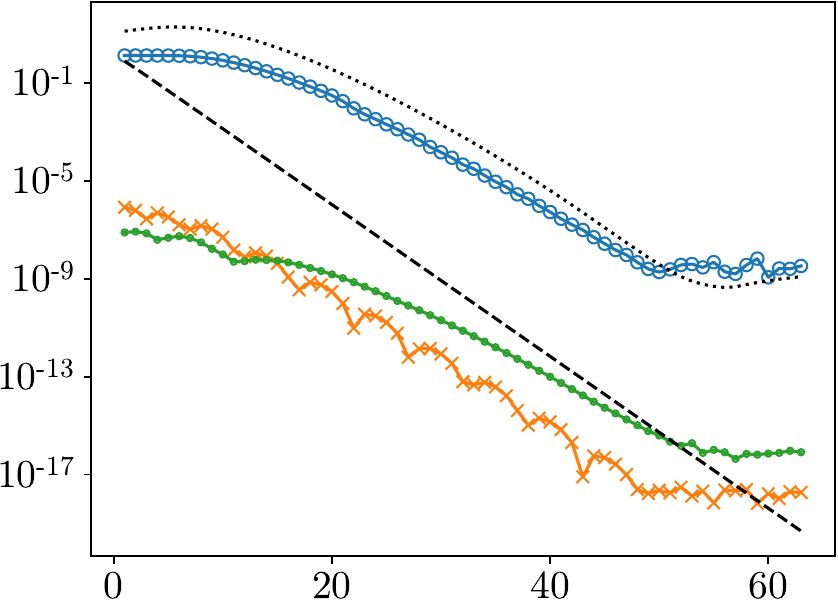}
}
\subfloat[$k=4$]{%
\includegraphics[scale=0.47]{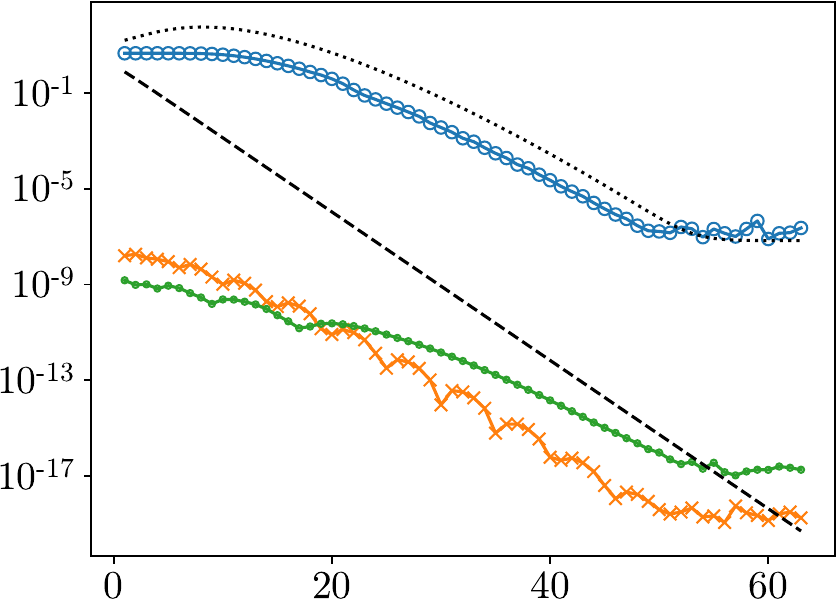}
}
\hfill
\subfloat[$k=5$]{%
\includegraphics[scale=0.47]{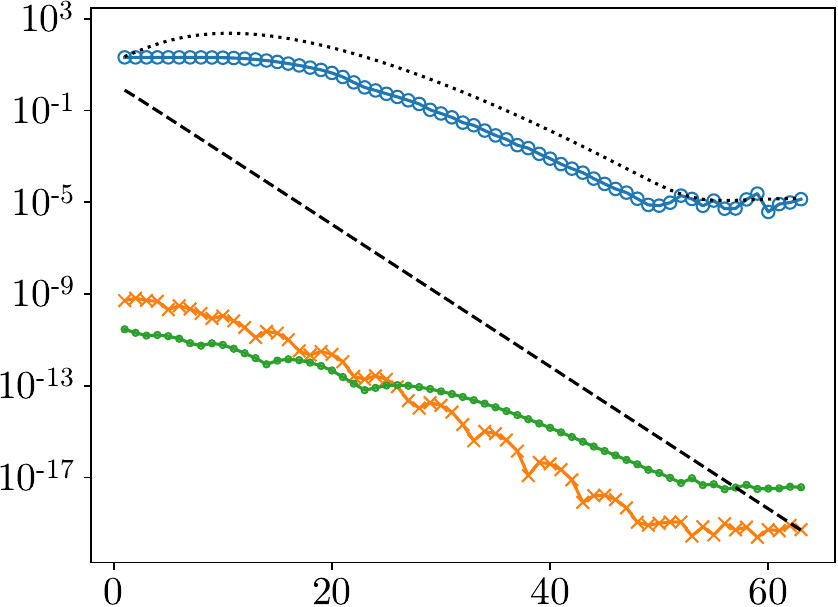}
}
\subfloat[$k=6$]{%
\includegraphics[scale=0.47]{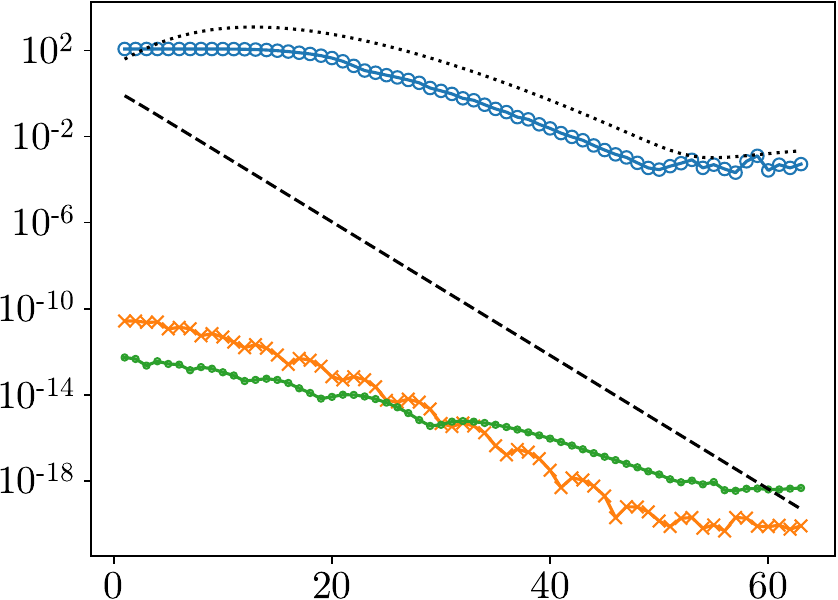}
}
\caption{The $L^{\infty}$ approximation error of 
$f(x)=\int_1^{250} x^{\mu} \sigma_6(\mu) \, d\mu=x^c (\log{x})^k$
over $[0,1]$,
$E_N:=\frac{\|f-\hat{f}_N\|_{L^{\infty}[0,1]}}{\norm{\sigma}_{C^m([a,b])^*}}$,
as a function of $n$, 
for $c=a$, $\frac{a+b}{2}$, $b$, $k=1$, \ldots, $6$, and
$\gamma=250$.
$U_{n,k} :=\max_{0\leq i \leq n-1} \bnorm{u_i\bigl{(}\tfrac{t-a}{b-a}
\bigr{)}}_{C^k([a,b])}$.}
\label{fig:exp6}
\end{figure}

\subsection{Approximation Over a Simple Arc in the Complex Plane}
In this subsection, we investigate the performance of our algorithm 
on simple and smooth arcs in the complex plane. 
Suppose that $\tilde{\gamma}\colon [0,1] \rightarrow \mathbb{C}$,
and let $\Gamma = \tilde{\gamma}([0,1])$. We replace the interpolation matrix V in \Cref{eq:V} 
by a modified interpolation matrix 
$V_{\Gamma}$, defined by 
\begin{align}
\label{eq:V2}
V_{\Gamma}=
\begin{pmatrix}
\tilde{\gamma}(x_1)^{t_1}& \tilde{\gamma}(x_1)^{t_2} & \   \ldots& \ \tilde{\gamma}(x_1)^{t_N}\\
\tilde{\gamma}(x_2)^{t_1}& \tilde{\gamma}(x_2)^{t_2} & \   \ldots& \ \tilde{\gamma}(x_2)^{t_N}\\
\vdots & \vdots& \  \ddots &   \vdots\\
\tilde{\gamma}(x_N)^{t_1}& \tilde{\gamma}(x_N)^{t_2} & \   \ldots& \ \tilde{\gamma}(x_N)^{t_N}
\end{pmatrix} \in \mathbb{C}^{N \times N}.
\end{align}
Specifically, we consider the arcs 
$\tilde{\gamma}(t)=t+ \alpha i(t^2-t)$,
for $\alpha=0.8$, $1.6$ and $2.4$, which are plotted in \Cref{fig:arc}.
Our goal is to approximate functions of the form
\begin{align}
f_{\Gamma}(t):=\int_a^b \tilde{\gamma}(t)^{\mu} \sigma(\mu) \, d\mu,
\end{align}
over the arcs $\tilde{\gamma}(t)$, where $t \in [0,1]$. 
We apply the algorithm
%%%with $N=n$ and $\epsilon=\epsilon_0$
to the functions
$f_{\Gamma}(t)$ where
$\sigma(\mu)$ has the forms
$\sigma_3(\mu)$ and $\sigma_4(\mu)$, as defined in
\Cref{eq:sigma3} and \Cref{eq:sigma4}, respectively.
The experiments are repeated for 
$\gamma=10,50,250$, and the
results are displayed in \Cref{fig:cexp1,fig:cexp2,fig:cexp3}.

We also
investigate the approximation errors for non-integer powers
$f_{\Gamma}(t)=\tilde{\gamma}(t)^c$ over
the arcs $\tilde{\gamma}(t)$, where $c \in [\frac{a}{1.5}, 1.5 b]$,
following the same procedure as the one described in 
\Cref{sec:nonint}.
The results are displayed in \Cref{fig:cmu}.

By analyzing the approximation errors over $\tilde{\gamma}(t)$, for different values of
$\alpha$, we observe that the approximation error grows with $\alpha$, 
and depends on the specific functions being approximated.
Generally, when $\gamma$ is small, the approximation error grows only slightly as the arc
becomes more curved, while for large $\gamma$, it is possible for 
the approximation error to grow significantly larger than $\alpha_n$.
When the arc is slightly curved, the approximation performs similarly to 
the cases where $\tilde{\gamma}(t)=[0,1]$,
with the error bounded by $\alpha_n$.

\begin{figure}[!h]
  \centering
  \includegraphics[scale=0.45]{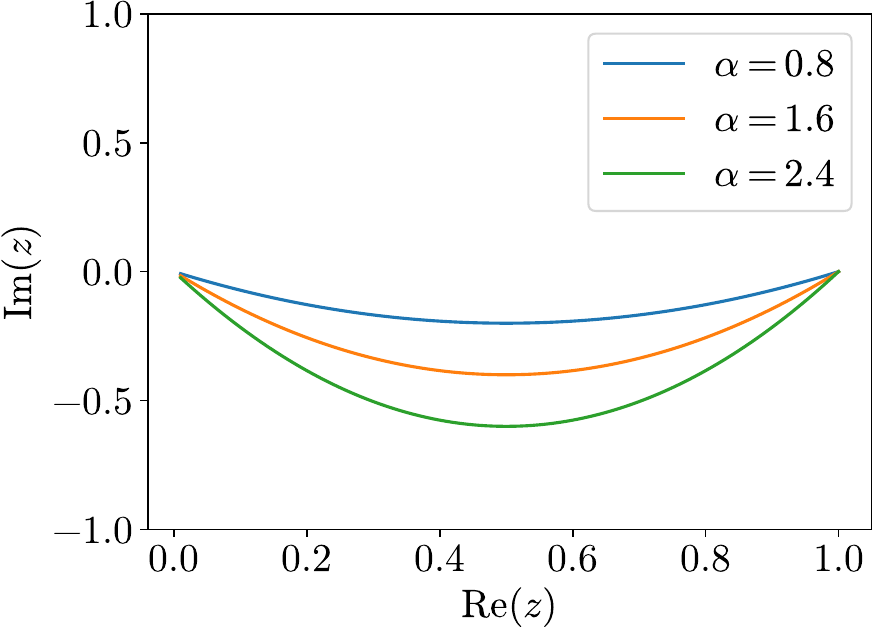}
  \caption{$\tilde{\gamma}(t)=t+\alpha i(t^2-t)$.}
  \label{fig:arc}
\end{figure}

\begin{figure}[!ht]
\centering
\subfloat[$f_{\Gamma}(t)=\int_1^{10} \tilde{\gamma}(t)^{\mu}\sigma_3(\mu) \,d\mu$]{%
\includegraphics[scale=0.47]{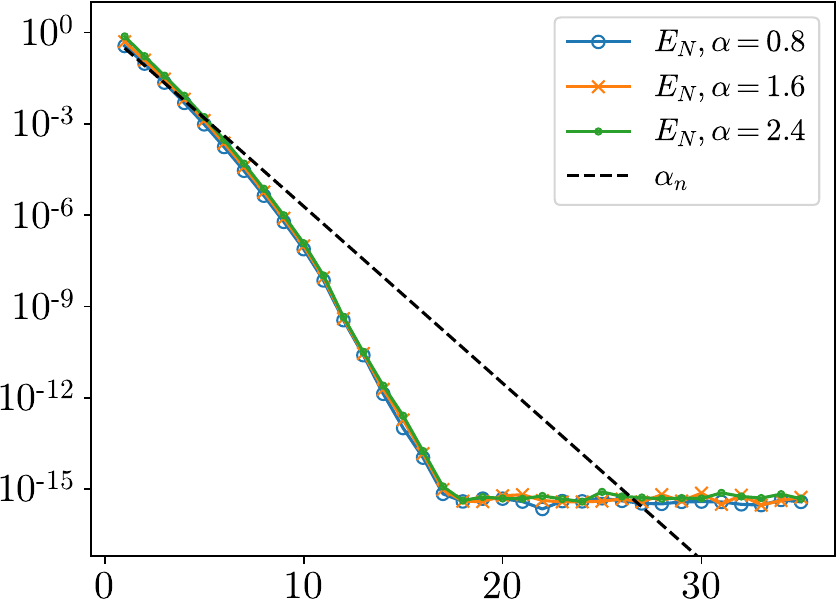}
}    
% \hfill
\subfloat[$f_{\Gamma}(t)=\int_1^{10} \tilde{\gamma}(t)^{\mu}\sigma_4(\mu) \,d\mu$]{%
\includegraphics[scale=0.47]{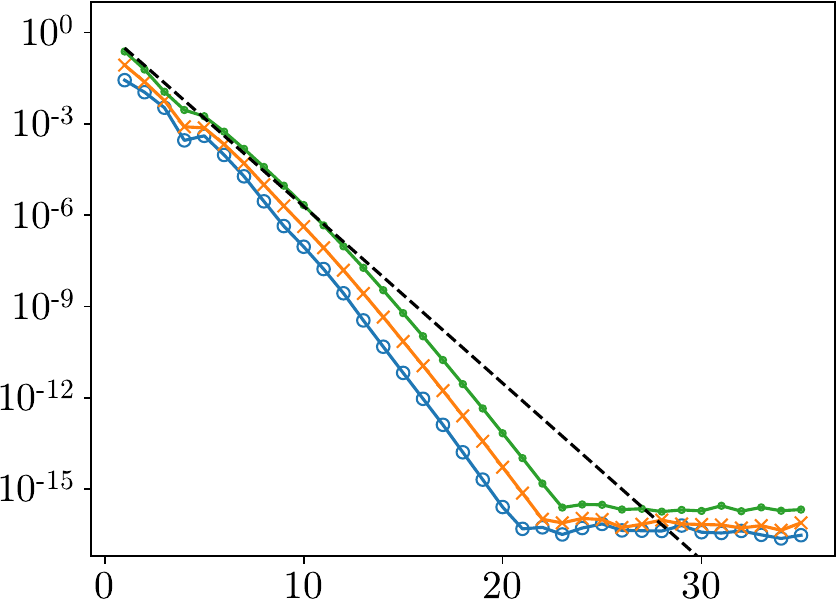}
}
\caption{The $L^{\infty}$ approximation error 
over $\tilde{\gamma}(t)$, 
$E_N:=\frac{\|f_{\Gamma}-\hat{f}_N\|_{L^{\infty}[0,1]}}{|\sigma|}$,
as a function of $n$, 
for $\alpha=0.8$, $1.6$, $2.4$, and $\gamma=10$.}
  \label{fig:cexp1}
\end{figure}

\begin{figure}[!ht]
\centering
\subfloat[$f_{\Gamma}(t)=\int_1^{50} \tilde{\gamma}(t)^{\mu}\sigma_3(\mu) \,d\mu$]{%
\includegraphics[scale=0.47]{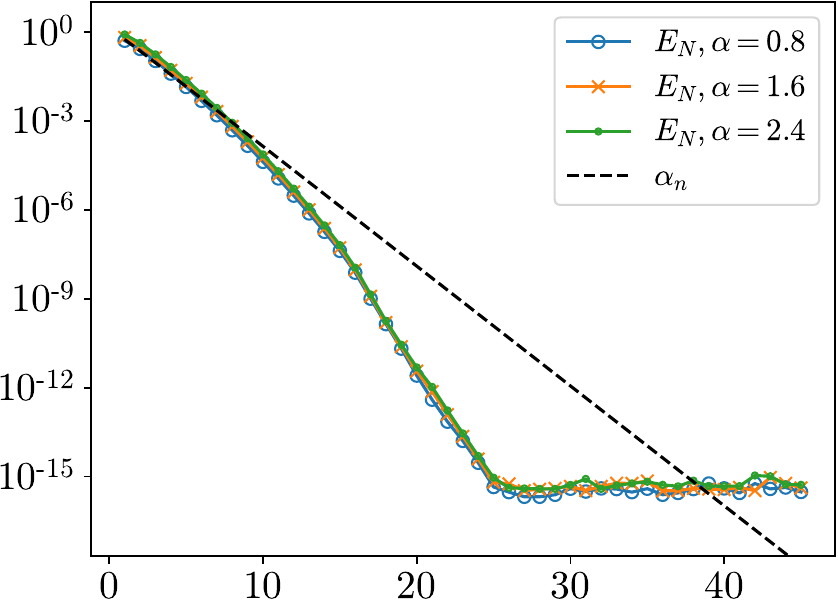}
}    
% \hfill
\subfloat[$f_{\Gamma}(t)=\int_1^{50} \tilde{\gamma}(t)^{\mu}\sigma_4(\mu) \,d\mu$]{%
\includegraphics[scale=0.47]{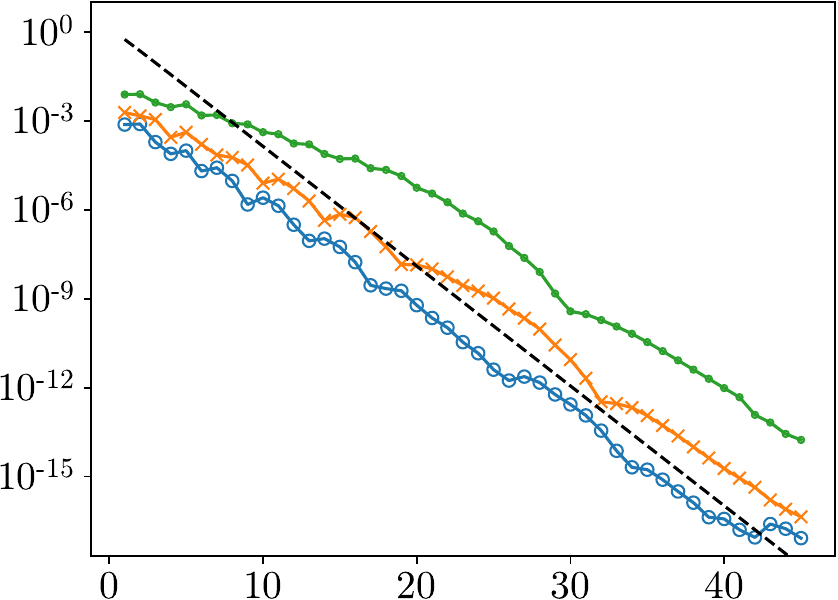}
}
\caption{The $L^{\infty}$ approximation error 
over $\tilde{\gamma}(t)$, 
$E_N:=\frac{\|f_{\Gamma}-\hat{f}_N\|_{L^{\infty}[0,1]}}{|\sigma|}$,
as a function of $n$, 
for $\alpha=0.8$, $1.6$, $2.4$, and $\gamma=50$.}
  \label{fig:cexp2}
\end{figure}

\begin{figure}[!ht]
\centering
\subfloat[$f_{\Gamma}(t)=\int_1^{250} \tilde{\gamma}(t)^{\mu}\sigma_3(\mu) \,d\mu$]{%
\includegraphics[scale=0.47]{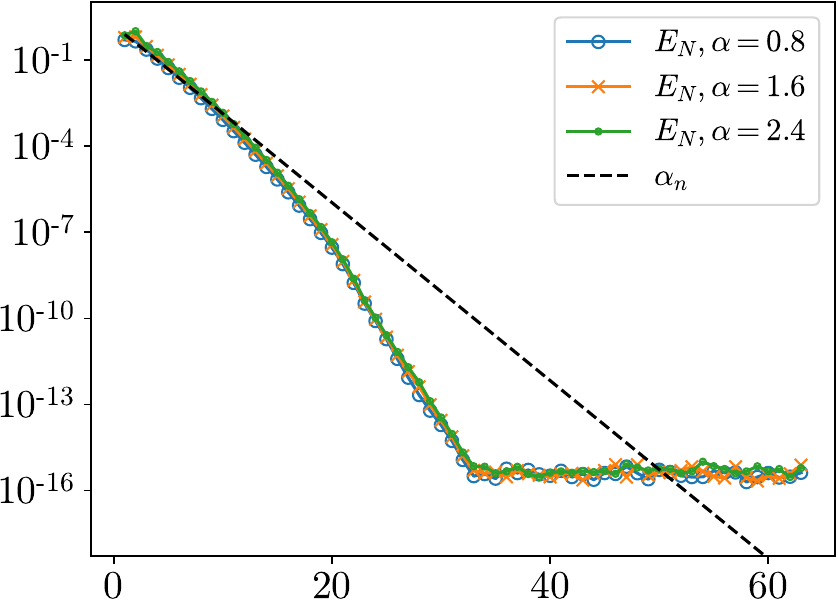}
}    
% \hfill
\subfloat[$f_{\Gamma}(t)=\int_1^{250} \tilde{\gamma}(t)^{\mu}\sigma_4(\mu) \,d\mu$]{%
\includegraphics[scale=0.47]{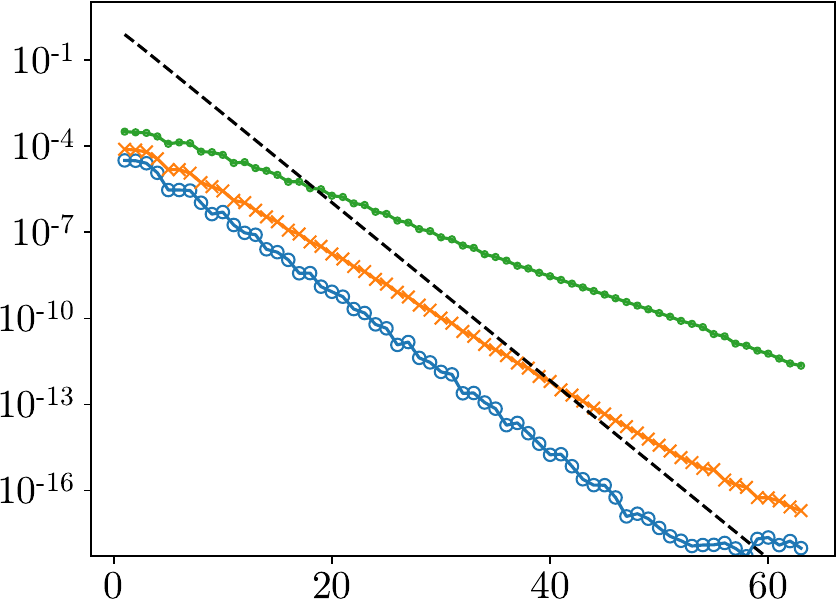}
}
\caption{The $L^{\infty}$ approximation error 
over $\tilde{\gamma}(t)$, 
$E_N:=\frac{\|f_{\Gamma}-\hat{f}_N\|_{L^{\infty}[0,1]}}{|\sigma|}$,
as a function of $n$, 
for $\alpha=0.8$, $1.6$, $2.4$, and $\gamma=250$.}
  \label{fig:cexp3}
\end{figure}

\begin{figure}[!ht]
\centering
\subfloat[$n=32$, $\gamma=10$: $a=1$, $b=10$]{%
\includegraphics[scale=0.47]{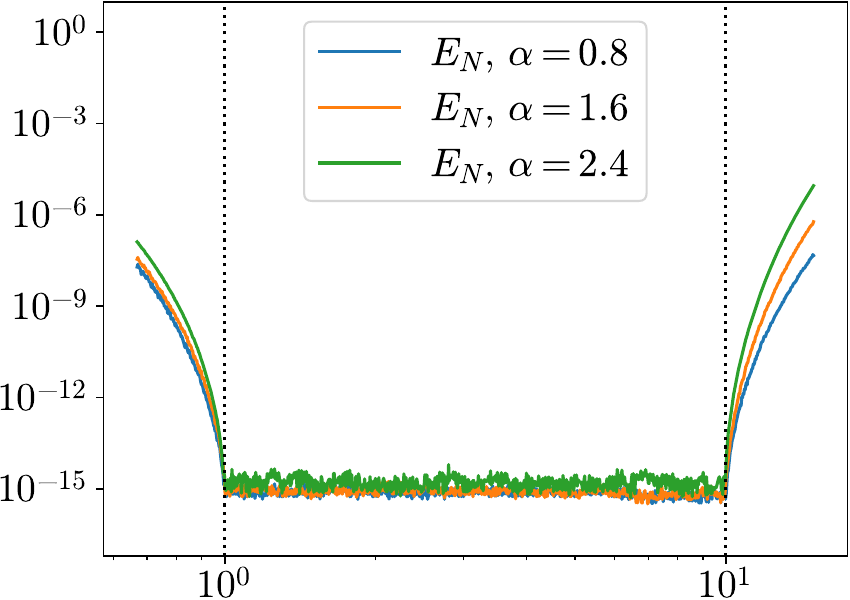}
}    
% \hfill
% \qquad

\subfloat[$n=42$, $\gamma=50$: $a=1$, $b=50$]{%
\includegraphics[scale=0.47]{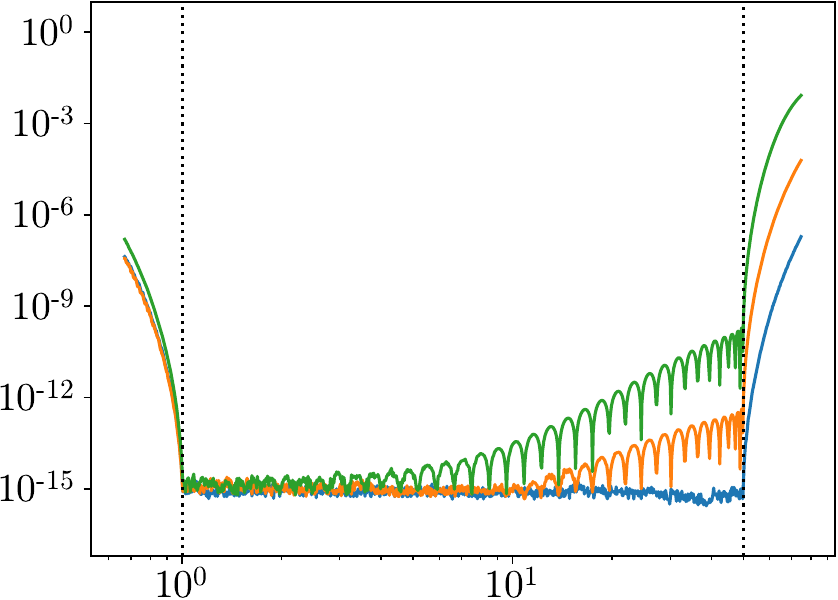}
}
% \hfill
% \qquad

\subfloat[$n=55$, $\gamma=250$: $a=1$, $b=250$]{%
\includegraphics[scale=0.47]{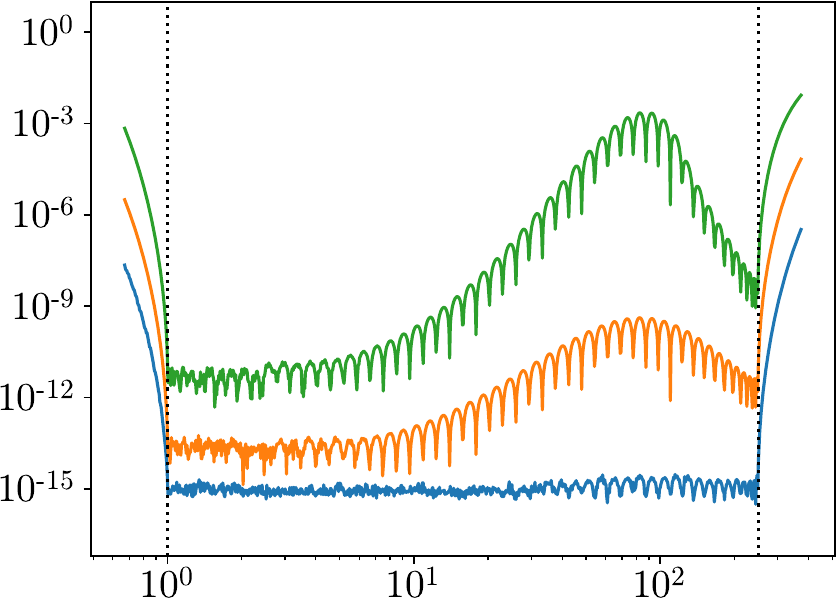}
}
\caption{The $L^{\infty}$ approximation error of
$f_{\Gamma}(t)=\int_a^{b} \tilde{\gamma}(t)^{\mu}\sigma_5(\mu) \,d\mu=
\tilde{\gamma}(t)^{c}$
over $\tilde{\gamma}(t)$,
$E_N:=\frac{\|f_{\Gamma}-\hat{f}_N\|_{L^{\infty}[0,1]}}{|\sigma|}$,
as a function of $c$, for a fixed $n$ such that $\alpha_n\approx \epsilon_0$,
$\alpha=0.8$, $1.6$, $2.4$, 
and $\gamma=10$, $50$, $250$.}
\label{fig:cmu}
\end{figure}

\subsection{Clustering of the Collocation Points}
We analyze the clustering behaviour of the collocation points by plotting
them
for $\gamma=10$, $50$, $250$, and for different values of $n$, 
as demonstrated in \Cref{fig:nx,fig:nx_all}.
We observe that
the collocation points cluster double-exponentially towards zero, for points that
are close to zero, while clustering
at a slower rate, rather than double-exponentially, for points that are away from zero. 
Notably, \Cref{fig:nx} reveals that
the closest collocation point to zero required to achieve an approximation error
of size $\alpha_n$ approaches zero at only an exponential rate as $n$ increases.
For a fixed approximation error, such as $\epsilon_0$,
the closest collocation point to zero
remains at the same distance from zero, for a fixed value of $a$ and varying values of $b$,
as shown in
\Cref{fig:nx_all}.

%%%
%%%at a rate shower than
%%%exhibits a rate slower than doublyexponential  
%%%clustering around the singularity at $x=0$. Specifically, 
%%%the density of $\{x_j\}_{j=1}^{N}$ over $[0,1]$ tapers
%%%in the direction of $x=0$, when viewed 
%%%on a logarithmic scale.
%%%This observation is demonstrated in \Cref{fig:nx},
%%%for $\gamma=10$, $50$, $250$, and for $n$ ranging from $1$ to the value
%%%of $n$ where 
%%%$\alpha_n \approx \epsilon_0$. 

\begin{figure}[!ht]
\centering
\subfloat[$\gamma=10$: $a=1$, $b=10$]{%
\includegraphics[scale=0.47]{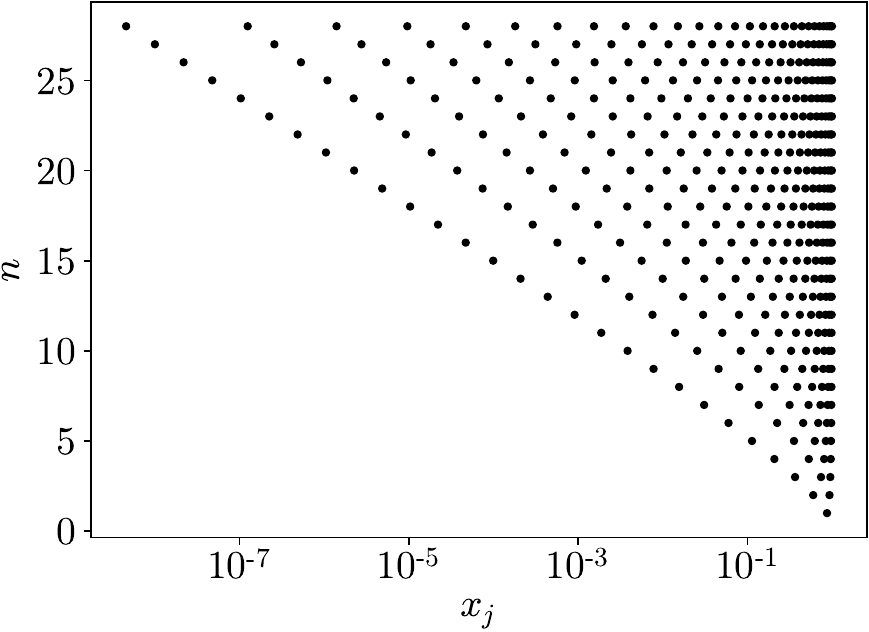}
}    
% \hfill
% \qquad
% \newline

\subfloat[$\gamma=50$: $a=1$, $b=50$]{%
\includegraphics[scale=0.47]{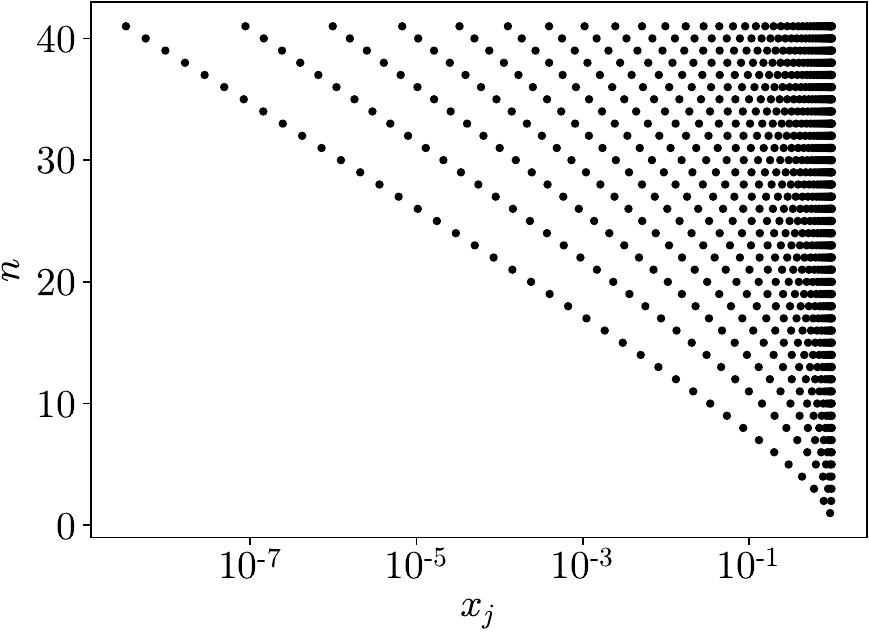}
}
% \hfill
% \medskip
% \newline
% \qquad

\subfloat[$\gamma=250$: $a=1$, $b=250$]{%
\includegraphics[scale=0.47]{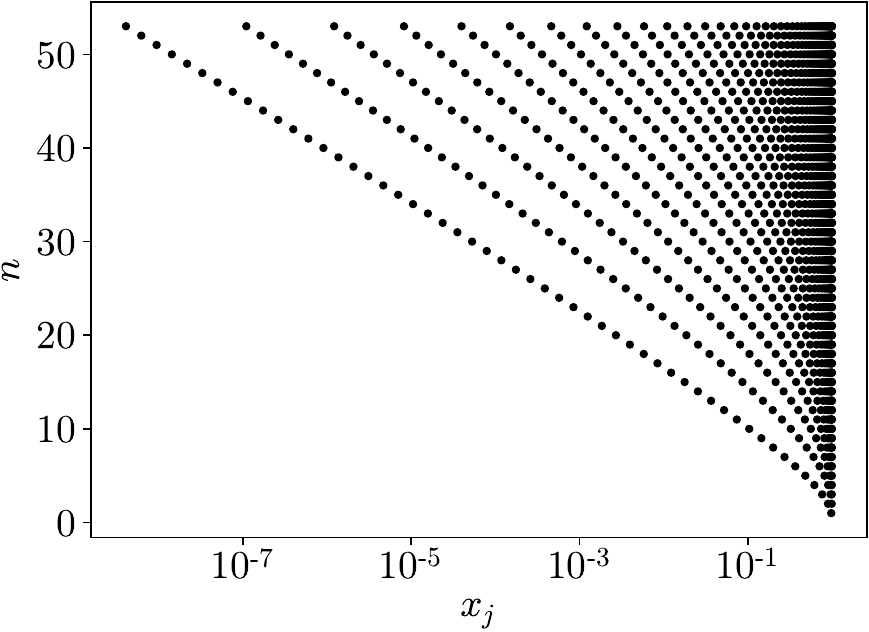}
}
\caption{The distribution of collocation points
$\{x_j\}_{j=1}^N$
over $[0,1]$,
for values of $n$ such that $\alpha_n \gtrsim \epsilon_0$, and 
$\gamma=10$, $50$, $250$.}
\label{fig:nx}
\end{figure}

\begin{figure}
  \centering
  \includegraphics[scale=0.40]{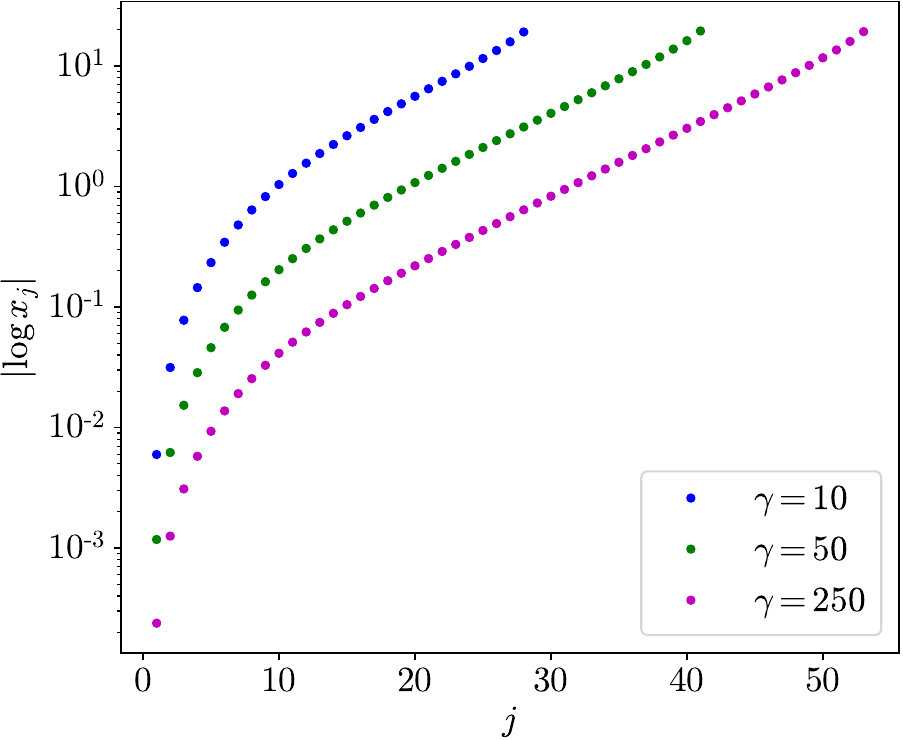}
  \caption{The distribution of collocation points
  $\{x_j\}_{j=1}^N$
  over $[0,1]$,
  for fixed values of $n$ such that $\alpha_n \approx \epsilon_0$, and $a=1$,
  $b=10$, $50$, $250$.}
  \label{fig:nx_all}
\end{figure}

\section{Conclusion}
In this paper, we introduce an approach to approximate functions
of the form
$f(x)=\int_{a}^{b} x^{\mu} \sigma(\mu) \, d \mu$ over the interval $[0,1]$,
by expansions in a small number of singular powers $x^{t_1}$, $x^{t_2}$, \ldots ,
$x^{t_N}$,
where $0<a<b<\infty$ and $\sigma(\mu)$ is some signed Radon measure or some distribution
supported on $[a,b]$. 
Given any desired accuracy $\epsilon$, 
our method guarantees that the uniform approximation error over
the entire interval $[0,1]$
is bounded by $\epsilon$ multiplied by certain small constants. Additionally, 
the number of basis functions $N$ grows asymptotically as $O(\log{\frac{1}{\epsilon}})$,
and the expansion coefficients can be found by
collocating the function at specially chosen collocation points
$x_1$, $x_2$, \ldots , $x_N$ and solving 
an $N \times N$ linear system numerically. 
In practice, when $\frac{b}{a}=10$ and $\sigma$ is a signed Radon measure, 
our method
requires only approximately $N=30$ basis functions
and collocation points in order to achieve 
machine precision accuracy. Numerical experiments 
demonstrate that our method can also be used for approximation 
over simple 
smooth arcs in the complex plane.
A key feature of our method is that both the basis functions
and the collocation points
are determined   
a priori by only the values of $a$, $b$, and $\epsilon$. 
This sets it apart from expert-driven approximation methods, and from
other methods that rely on careful selection of
parameters to determine the basis functions. For example, the basis functions 
used in lightning and reciprocal-log approximation are defined by the
locations of poles, and the SE-Sinc and DE-Sinc approximations
depend on the choices of smooth transformations.  
Compared to the DE-Sinc approximation, which achieves
nearly-exponential rates of convergence at the cost of double-exponentially
clustered collocation points, our method uses collocation points
that cluster double-exponentially only for points that are close
to the singularity,
and at a slower rate, rather than double-exponentially, for points that are further away. 
Moreover, the closest collocation point required to achieve an approximation error 
of size $\alpha_n$ approaches the singularity at only an exponential rate as $n$ increases.
For a fixed desired accuracy $\epsilon$, the closest collocation point 
stays at the same distance from the singularity for a fixed value of $a$ and varying 
values of $b$.
%%%
Compared to reciprocal-log approximation, which requires the least-squares
solution of an overdetermined linear system with many collocation points,
our method involves the solution of a small square linear system to
determine the expansion coefficients.

Since our method approximates singular functions
accurately by expansions in singular powers,
it can be used with existing finite element methods
or integral equation methods to 
approximate
the solutions of PDEs on nonsmooth geometries or with discontinuous data.
Typically, the leading singular terms of the asymptotic expansions of solutions
near corners are derived from 
the angles at the corners, and are added to the basis functions 
of finite element methods to enhance the convergence rates  
(see, for example, \cite{tong}, \cite{fix}, 
\cite{olson}).
Now, with only the knowledge that the singular solutions are of the
form~\Cref{eq:repf}, we can enhance the convergence rates of finite element
methods without knowledge of the angles at the corners, by adding all of the
singular powers obtained from our method to the basis functions.  Likewise,
the singular powers obtained from our method can be used in integral
equation methods for PDEs.
In integral equation methods, 
boundary value problems for PDEs are reformulated
as integral equations
for boundary charge and dipole densities which represent their solutions. 
Previously, singular asymptotic expansions of the densities, determined
by the angles at the corners, were used to construct special quadrature rules
to solve these integral equations (see, for example, \cite{corners},
\cite{corners2}).
Using only the fact that the singular
densities are of the form~\Cref{eq:repf}, quadrature rules can instead be developed 
for only the singular
powers obtained from our method, independent of the angles at the corners.

\end{document}